\theoremstyle{plain}
\newtheorem{theorem}{Theorem}[section]
\newtheorem{lemma}[theorem]{Lemma}
\newtheorem{corollary}[theorem]{Corollary}
\newtheorem{proposition}[theorem]{Proposition}
\theoremstyle{definition}
\newtheorem{definition}[theorem]{Definition}
\theoremstyle{remark}
\numberwithin{equation}{section}
\newcommand\numberthis{\addtocounter{equation}{1}\tag{\theequation}}
\newcommand{\e}{\mathrm{e}}
\newcommand{\alg}{\mathsf{ALG}}
\newcommand{\bis}{\mathsf{bis}}
\newcommand{\SK}{\mathrm{SK}}
\newcommand{\bg}{\mathbf{g}}
\newcommand{\bA}{\mathbf{A}}
\newcommand{\bB}{\mathbf{B}}
\newcommand{\bJ}{\mathbf{J}}
\newcommand{\bX}{\mathbf{X}}
\newcommand{\same}{\mathsf{same}}
\newcommand{\E}{\mathop{{}\mathbf{E}}}
\newcommand{\T}{\mathrm{T}}
\newcommand{\Po}{\mathrm{Po}}
\title{Hardness of sampling for the anti-ferromagnetic Ising model on random graphs}
\author{Neng Huang\thanks{University of Michigan. Work was primarily done when NH was affiliated with the University of Chicago, supported partly by the Institute for Data, Econometrics, Algorithms, and Learning (IDEAL) with NSF grant ECCS-2216912. Email: \texttt{nengh@umich.edu}} \and Will Perkins\thanks{Georgia Tech. Supported in part by NSF grant CCF-2309708. Email: \texttt{wperkins3@gatech.edu}} \and Aaron Potechin\thanks{University of Chicago. Email: \texttt{potechin@uchicago.edu}}}
\begin{document}
\maketitle

\begin{abstract}
    We prove a hardness of sampling result for the anti-ferromagnetic Ising model on random graphs of average degree $d$ for large constant $d$, proving that when the normalized inverse temperature satisfies $\beta>1$ (asymptotically corresponding to the condensation threshold), then w.h.p. over the random graph there is no stable sampling algorithm that can output a sample close in $W_2$ distance to the Gibbs measure.  The results also apply to a fixed-magnetization version of the model, showing that there are no stable sampling algorithms for  low but positive temperature max and min bisection distributions.  These results show a gap in the tractability of search and sampling problems:  while there are efficient algorithms to find near optimizers, stable sampling  algorithms cannot access the Gibbs distribution concentrated on such solutions.

    Our techniques involve extensions of the interpolation technique relating behavior of the mean field Sherrington-Kirkpatrick model to behavior of Ising models on random  graphs of average degree $d$ for large $d$.  While previous interpolation arguments compared the free energies of the two models, our argument compares the average energies and average overlaps in the two models.  

\end{abstract}

\section{Introduction}
\label{secIntro}

The study of disordered systems is at the intersection of statistical physics, probability theory, and computer science.  Models of disordered systems, such as the Edwards--Anderson model, Sherrington--Kirkpatrick model, random constraint satisfaction problems, and combinatorial optimization problems on random networks, have been studied from many different perspectives, including as toy models of complex physical systems and as a source of random computational problems on which to test algorithms and understand the sources of algorithmic intractability.  

In the algorithmic context, disordered systems present at least two natural algorithmic challenges: the first is the \textit{search problem}, that of finding a high quality solution to an optimization problem or finding a near ground state  in the language of statistical physics; the second is the \textit{sampling problem}, that of (approximately) sampling from a Gibbs measure that weights solutions exponentially by their objective value. 

One striking phenomenon that has been discovered is a gap between the tractability of search and sampling.  That is, for certain choices of parameters in some models of disordered systems, the search problem is tractable (there exist polynomial-time or even near-linear time algorithms to  find high quality solutions), while no efficient sampling  algorithms are known, and in fact broad classes of sampling algorithms can be proved to be ineffective.  So while high quality solutions can be found efficiently, these solutions are not typical, in the sense that the distribution of solutions found by a given search algorithm is far from the equilibrium Gibbs measure.

A prime example of this phenomenon occurs in the Sherrington--Kirkpatrick (SK) model~\cite{sherrington1975solvable}, an Ising model on the complete graph on $n$ vertices in which the $\binom{n}{2}$ coupling constants are independent Gaussian random variables (see e.g~\cite{panchenko_sherrington-kirkpatrick_2013} for a mathematical survey of the model).  Here, after appropriate normalization, the key parameter is $\beta$, the inverse temperature of the model.   Parisi famously predicted a formula for the limiting free energy of the SK model model as a function of $\beta$~\cite{parisi1979infinite,parisi1980sequence}; this formula is non-analytic at $\beta=1$, indicating the existence of the phase transition at this point.  The Parisi formula was proved rigorously by Guerra~\cite{guerra2003broken} and Talagrand~\cite{talagrand_parisi_2006}, with methods that have proved very influential in mathematics, physics, and computer science in the past two decades.   Along with the Parisi  formula comes information about the SK Gibbs measure itself; in particular, Parisi identified the `average overlap' of two independent samples from the Gibbs measure as an order parameter for the phase transition~\cite{parisi1983order} and developed the theory of a hierarchy of `replica symmetry breaking'.  Thus the equilibrium picture of the SK model is fairly well understood, with the phase transition identified at $\beta =1$. 

Turning to the algorithmic search  problem, the task is to find a solution $\sigma \in \{ \pm 1\}^n$ that (approximately) maximizes the probability mass function of the SK model; that is, the task of finding a ground state or near ground state of the model.   Montanari~\cite{montanari2021optimization}, partly inspired by the  work of Subag on spherical spin glasses~\cite{subag2021following}, proposed an algorithm for this task  based on approximate message passing, which finds an approximate optimizer under the widely believed assumption that the SK model has ``no overlap gap'' (sometimes also referred to as ``full replica symmetry breaking'') for large enough $\beta$. This approach was later extended to mixed $p$-spin models~\cite{alaoui_optimization_2021}, which can be thought of as the generalization of the Ising model to hypergraphs.

For the sampling problem, however, the picture is less optimistic. It was predicted by physicists that the simple Glauber dynamics should converge fast as long as $\beta < 1$~\cite{PhysRevLett.47.359}, but progress on a rigorous proof of this statement has only come recently. A series of recent works~\cite{bauerschmidt2019very,eldan2022spectral,chen2022localization,anari2021entropic} showed that Glauber dynamics does indeed converge quickly if $\beta < 1/4$ (more specifically, in $O(n \log n)$ steps). Using a different approach based on \emph{stochastic localization},  El Alaoui, Montanari, and Sellke~\cite{alaoui_sampling_2022} gave a different sampling algorithm that approximately samples from the SK Gibbs measure for $\beta < 1/2$, with the approximation in terms of the Wasserstein distance instead of the total variation distance guarantees given by previous works. This result was later improved to the entire replica-symmetric phase $\beta < 1$ by Celentano~\cite{10.1214/23-AOP1675}. On the other hand, obstacles seem to arise after the phase transition at $\beta = 1$. El Alaoui, Montanari, and Sellke showed that when $\beta > 1$, the onset of \emph{disorder chaos} naturally obstructs \emph{stable} sampling algorithms~\cite{alaoui_sampling_2022}. Similar ideas were used recently by El Alaoui and Gamarnik to rule out stable sampling algorithms for the symmetric binary perceptron problem~\cite{alaoui_hardness_2024}.

One can ask whether this gap in tractability for the search and sampling problems is universal for certain classes of disordered systems.  The SK model is an example of a \textit{mean field model}: all spins (variables) interact with each other.  Mean field models have the advantage of being mathematically tractable to some degree, but the disadvantage of being unrealistic from the physics perspective as well as the perspective of large real-world networks.  Seeking a trade-off between these two aspects, physicists have studied \textit{diluted mean-field models}: that is, statistical physics models on random graphs or hypergraphs of constant average degree~\cite{mezard2001bethe}.  These models inherit some of the symmetries of mean-field models while also having some non-trivial local geometry.  The study of diluted mean field models in physics led to rich and surprising predictions for the behavior of optimization problems on random graphs and random constraint satisfaction problems~\cite{mezard2002analytic,krzakala2007gibbs,decelle2011asymptotic}.  Rigorously proving these physics predictions is a major task in mathematics and theoretical computer science. 

The specific diluted mean-field model we address here  is the anti-ferromagnetic Ising model on sparse random graphs. The Hamiltonian of this model is the number of edges in the random graph cut by the given configuration, and the ground states correspond to the maximum cuts in the graph. Finding the maximum cut in a graph is one of the best-known constraint satisfaction problems, and due to this connection this model has been studied extensively in computer science as well as statistical physics. It is known that as the average degree of the random graph increases, many aspects of this model can be understood via the SK model. Dembo, Montanari, and Sen showed that the typical size of the maximum cut in both Erd\"{o}s-R\'{e}nyi random graphs and random regular graphs can be related to the ground state energy of SK model~\cite{dembo_extremal_2017}. They proved this by interpolating the free energy between the two models (described in more detail below). El Alaoui, Montanari, and Sellke then gave an algorithm for finding a near maximum cut in a random regular graph by adapting Montanari's message-passing algorithm for the SK model~\cite{alaoui_local_2021}. 

It is natural to ask if this connection between the anti-ferromagnetic Ising model and the SK model can be extended further. In particular, does the search vs. sampling phenomenon also arise in the anti-ferromagnetic Ising model on random graphs? In this paper, we give an affirmative answer to this question.

\subsection{Main Results}
\label{secMainResults}

Before stating our main results we  introduce some necessary definitions and notation. We refer to an element in $\{-1, 1\}^n$ as a \emph{configuration}. Given a graph $G$ and its adjacency matrix $A_G$, we define the \emph{Ising Hamiltonian} $H_G(\sigma) = \sum_{1 \leq i \leq j \leq n} (A_G)_{ij}\sigma(i)\sigma(j)$. Note that $\frac{|E(G)| + H_G(\sigma)}{2}$ is exactly the number of edges in the graph whose endpoints are assigned the same spin by $\sigma$. In particular, a ground state (i.e., a configuration that minimizes the Hamiltonian) of $H_G$ corresponds to a maximum cut in the graph. For any \emph{inverse temperature parameter} $\beta \in \mathbb{R}$, $H_G$ induces the \emph{Gibbs distribution}

\begin{equation}
\mu_{\beta, G}(\sigma) = \frac{\exp(-\beta H(\sigma))}{Z_G(\beta)}.
\end{equation}
where
\begin{equation}
 Z_G(\beta)=   \sum_{\sigma'\in \{-1, 1\}^n}\exp(-\beta H(\sigma')) 
\end{equation}
is the \emph{partition function}, the normalizing constant of the Gibbs measure.

In this paper, as is common in prior works, the random graph model we consider will be the Poissonized random multigraph model $\mathbb{G}_{n,d}^\Po$, where we first sample the number of edges from a Poisson distribution with mean $dn/2$ (so that the expected average degree is $d$), and then for each edge sample both of its endpoints independently from $[n] = \{1, 2, \ldots, n\}$. 

Given any two measures $\mu_1, \mu_2$ over $\{-1, 1\}^n$, we define their \emph{normalized Wasserstein distance} to be 
    \begin{equation}
    W_{2,n}(\mu_1, \mu_2) = \inf_{\pi \in \Gamma(\mu_1, \mu_2)} \left(\E_{(\sigma_1, \sigma_2) \sim\pi} \left[\frac{1}{n}\sum_{i = 1}^n(\sigma_1(i) - \sigma_2(i))^2\right]\right)^{1/2},
    \end{equation}
    where $\Gamma(\mu_1, \mu_2)$ is the set of all couplings of $\mu_1$ and $\mu_2$; that is, measures over $\{-1, 1\}^n \times \{-1, 1\}^n$ whose marginal on the first argument is $\mu_1$ and the second $\mu_2$.

Our main result is a hardness of sampling theorem in terms of the $W_{2, n}$ distance for the measure $\mu_{\beta, G}$ against \emph{stable} sampling algorithms where $G \sim \mathbb{G}_{n,d}^\Po$. Informally, a sampling algorithm is stable if its output is insensitive to small perturbations of the input. See Definition~\ref{def:stability_definition} for a formal definition.
\begin{theorem}\label{thm:main}
Let $\{\mathsf{ALG}_n\}_{n \geq 1}$ be a family of randomized sampling algorithms where $\alg_n$ takes as input an $n$-vertex (multi-)graph $G$ and an inverse temperature parameter $\beta$ and produces an output law $\mu^{\alg}_{\beta, G}$ over $\{-1, 1\}^n$. For every $\beta > 1$ and $d$ sufficiently large, if $\{\mathsf{ALG}_n\}_{n \geq 1}$ is \emph{stable} at inverse temperature parameter $\beta / \sqrt{d}$, then
\begin{equation}
\liminf\limits_{n \to \infty}\E_{G \sim \mathbb{G}_{n,d}^\Po}\left[W_{2,n}\left(\mu^{\alg}_{\beta/\sqrt{d}, G},\mu_{\beta/\sqrt{d}, G}\right)\right] > 0.
\end{equation}
\end{theorem}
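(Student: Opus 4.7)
The plan is to establish disorder chaos for the Gibbs measure $\mu_{\beta/\sqrt{d}, G}$ on random graphs at $\beta > 1$, and combine it with the stability hypothesis, following the template of El Alaoui--Montanari--Sellke for the SK model. The guiding picture is: if a stable algorithm outputs a sample close in $W_{2,n}$ to the Gibbs measure, then on two highly correlated random inputs it must produce highly correlated outputs, while the true Gibbs samples on those inputs are essentially uncorrelated in the spin glass phase; this yields a contradiction.

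First I would introduce a one-parameter coupling $(G_1(t), G_2(t))$ of two elements of $\mathbb{G}_{n,d}^\Po$ in which a fraction $t \in [0,1]$ of the edges are independently resampled and the remainder are shared. By the stability definition applied along this family, for every fixed $\delta > 0$ one can pick $t$ small enough that $\alg_n$ produces outputs $\bsigma_1, \bsigma_2$ with normalized inner product at least $1 - \delta$ in expectation. If the algorithm were within normalized Wasserstein distance $\varepsilon$ of the Gibbs measure on each graph, then by a triangle inequality in $W_{2,n}$ the two Gibbs samples $\brho_1 \sim \mu_{\beta/\sqrt{d}, G_1(t)}$ and $\brho_2 \sim \mu_{\beta/\sqrt{d}, G_2(t)}$ must also have normalized inner product close to $1$ in expectation for $\varepsilon$ small.

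The heart of the argument is to rule this out via disorder chaos: for $\beta > 1$ and $d$ sufficiently large, the overlap $\langle \brho_1, \brho_2\rangle / n$ concentrates near $0$ for every fixed $t > 0$. To prove disorder chaos in the diluted model, I would use the interpolation technique between two coupled diluted Ising models and two coupled SK models, comparing \emph{average overlaps} along the interpolation (rather than just free energies), in the spirit of the extension advertised in the abstract. Concretely, one can write the average overlap as a derivative in an auxiliary coupling field of an augmented coupled free energy, and then interpolate that derivative between the two models. The known disorder chaos for the SK model at $\beta > 1$ (Chatterjee; Chen--Panchenko) then transfers to the diluted model.

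The main obstacle is exactly this last step. Classical Guerra interpolation controls only the free energy difference; to control the average overlap one must differentiate a coupled free energy in an overlap-coupling field and verify that the resulting interpolation inequality has the right sign at every level, uniformly in $t$. Technical challenges include: handling the Poissonized multi-edge structure so that the Poisson derivative identity gives a tractable, sign-controlled expression; showing that the error from approximating the sparse interaction terms by their Gaussian (SK) counterparts decays as $d \to \infty$ (which is where the rescaling $\beta/\sqrt{d}$ enters); and carrying the argument out at the level of the coupled model rather than a single replica, since overlaps between independent samples cannot be read off from a single-copy free energy alone. Once this interpolation of overlaps is in place, combining it with the SK disorder chaos and the consequence of stability above gives the contradiction that proves Theorem~\ref{thm:main}.
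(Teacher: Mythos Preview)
Your overall template is right, but there is a genuine logical gap in the second paragraph. You conclude from stability and small $W_{2,n}$ that ``the two Gibbs samples $\brho_1,\brho_2$ must have normalized inner product close to $1$,'' and then invoke disorder chaos to say that this inner product is close to $0$. These two statements are about \emph{different couplings}: small $W_{2,n}(\mu_{G_1(t)},\mu_{G_2(t)})$ says only that \emph{some} coupling of $\brho_1,\brho_2$ has overlap near $1$, whereas disorder chaos says the \emph{product} coupling $\mu_{G_1(t)}\otimes\mu_{G_2(t)}$ has overlap near $0$. These are not in contradiction; for instance, if both Gibbs measures were uniform on $\{-1,1\}^n$, the independent overlap is $0$ while $W_{2,n}=0$. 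So disorder chaos alone, as you have stated it, does not yield any lower bound on $W_{2,n}(\mu_{G_1(t)},\mu_{G_2(t)})$.

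What is missing is the second structural input that the paper establishes separately: for $\beta>1$ and $d$ large, the \emph{self-overlap} $\E_{\bA}\langle R_{1,2}^2\rangle_{\beta/\sqrt d,\bA}$ (two independent draws from the \emph{same} Gibbs measure) is bounded away from $0$. The paper proves this by interpolating the average energy to the SK model (via convexity and Griffiths' lemma on the free energy), and then relating average energy to $\langle R_{1,2}^2\rangle$ using a Stein--Chen/Poisson integration-by-parts identity in place of Gaussian integration by parts; this is Proposition~\ref{lem:overlap_interpolation}. Combining positive self-overlap at $t=0$ with vanishing cross-overlap at $t>0$ and the inequality of Lemma~\ref{lem:overlap_to_w2} (with $\mu_1=\mu_2=\nu_1=\mu_{\beta/\sqrt d,\bA}$, $\nu_2=\mu_{\beta/\sqrt d,\bA_t}$) is what actually forces $W_{2,n}(\mu_{\bA},\mu_{\bA_t})$ to be bounded below; the stability/triangle step then finishes as you outline. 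As a secondary remark, the paper does not prove disorder chaos by interpolating the overlap of the coupled system directly: it instead transfers the \emph{free-energy gap} between the overlap-restricted and unrestricted coupled partition functions from SK (where Theorem~\ref{thm:sk_pos_temp_chaos} gives an exponentially small ratio) to the sparse model via the coupled interpolation of~\cite{chen_disorder_2018,chen_suboptimality_2019}, and then uses concentration of $\log Z$ to recover the overlap statement. Your proposed ``derivative in an auxiliary coupling field'' route may be workable, but it is a different argument and would require a differentiability input for the limiting coupled free energy that the paper avoids.
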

That is, stable algorithms cannot sample from the model with vanishing error in Wasserstein distance.

It is known that the replica symmetry breaking threshold for $\mathbb{G}^{\Po}_{n,d}$ occurs at the Kesten-Stigum bound $\beta^\dag(d) = \frac{1}{2}\log\frac{\sqrt{d} + 1}{\sqrt{d} - 1}$\footnote{Unless otherwise specified, all logarithms in this paper are natural logarithms.}~\cite{mossel_reconstruction_2015, coja-oghlan_ising_2022}, so the threshold $\beta = 1$ corresponds  exactly to the normalized limit $\lim_{d \to \infty} \sqrt{d} \cdot \beta^\dag(d) = 1$. Note that $\beta = 1$ is also the replica symmetry breaking threshold of the Sherrington-Kirkpatrick model.

As it turns out, the same proof for Theorem~\ref{thm:main} yields sampling hardness for near-maximum and near-minimum bisections as well. Let $A_n = \{\sigma \in \{-1, 1\}^n: |\sum_{i = 1}^n\sigma(i)| \leq 1\}$ be the set of configurations in which the numbers of $+1$s and $-1$s differ by at most one, which we refer to as \emph{bisections}. The Gibbs distribution $\mu_{\beta, G}$ when restricted to $A_n$ gives
\begin{equation}
\mu_{\beta, G}^\bis(\sigma) = \frac{\exp(-\beta H(\sigma))}{Z_G^\bis(\beta)}.
\end{equation}
where 
\begin{equation}
  Z_G^\bis(\beta)  =\sum_{\sigma'\in A_n}\exp(-\beta H(\sigma'))
\end{equation}
is the bisection partition function.
Note that $\mu_{\beta, G}^\bis(\sigma)$ prefers bisections that cut more edges when $\beta > 0$, and bisections with fewer edges when $\beta < 0$.  This model is also known as the zero-magnetization Ising model and has been studied both in the statistical physics literature on disordered systems~\cite{mezard1987mean} and recently in computer science~\cite{carlson2022computational,bauerschmidt2023kawasaki,kuchukova2024fast}.

\begin{theorem}\label{thm:main_bis}
Let $\{\mathsf{ALG}_n\}_{n \geq 1}$ be a family of randomized sampling algorithms where $\alg_n$ takes as input an $n$-vertex (multi-)graph $G$ and an inverse temperature parameter $\beta$ and produces an output law $\mu^{\alg}_{\beta, G}$ over $\{-1, 1\}^n$. For every $\beta \in \mathbb{R}$ such that $|\beta| > 1$ and $d$ sufficiently large, if $\{\mathsf{ALG}_n\}_{n \geq 1}$ is \emph{stable} at inverse temperature parameter $\beta / \sqrt{d}$, then
\begin{equation}
\liminf\limits_{n \to \infty}\E_{G \sim \mathbb{G}_{n,d}^\Po}\left[W_{2,n}\left(\mu^{\alg}_{\beta/\sqrt{d}, G},\mu^\bis_{\beta/\sqrt{d}, G}\right)\right] > 0.
\end{equation}
\end{theorem}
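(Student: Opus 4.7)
The plan is to follow the proof of Theorem~\ref{thm:main} essentially unchanged, with the unrestricted Gibbs measure $\mu_{\beta,G}$ replaced everywhere by its bisection-conditioned analogue $\mu^{\bis}_{\beta,G}$, and with the target SK model replaced by the \emph{balanced} SK Gibbs measure, i.e.\ the SK measure conditioned on $A_n$. The skeleton is the same: establish a disorder chaos statement for the random-graph bisection model, then use stability to contradict it. Concretely, for $G$ and a small resampling $G'$ from $\mathbb{G}_{n,d}^{\Po}$, two independent samples $\bsigma\sim\mu^{\bis}_{\beta/\sqrt d,G}$ and $\bsigma'\sim\mu^{\bis}_{\beta/\sqrt d,G'}$ should have normalized overlap concentrating at a value strictly smaller than the overlap a stable algorithm is forced to produce on $G$ and $G'$, giving a constant lower bound on $W_{2,n}$.

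The reason the threshold becomes $|\beta|>1$ rather than $\beta>1$ is a Gaussian sign symmetry of the balanced SK model. Conditioning on $A_n$ essentially freezes the magnetization, so the mean-field term $(\sum_i\sigma(i))^2$ becomes constant, and what remains of the Hamiltonian is a centered spin-glass energy whose coupling distribution is symmetric under $J\mapsto -J$. The balanced SK model therefore has the same law at $\beta$ and $-\beta$, and its RSB transition sits at $|\beta|=1$; via the standard $1/\sqrt d$ rescaling this yields the threshold $|\beta|/\sqrt d$ appearing in Theorem~\ref{thm:main_bis}. The cases $\beta>1$ and $\beta<-1$ correspond respectively to low positive temperature sampling of max-bisections and min-bisections, both of which are then obstructed.

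With this setup, the three substantive steps are: (i) establish disorder chaos for the balanced SK model throughout $|\beta|>1$, by adapting the overlap analysis from the unrestricted SK case — the bisection constraint is linear and compatible with the Parisi variational problem once one restricts to zero-magnetization overlaps, and the analysis for $\beta<-1$ is an immediate corollary of the $\beta>1$ case by the symmetry above; (ii) transfer this disorder chaos to $\mu^{\bis}_{\beta/\sqrt d,G}$ via the interpolation argument from the paper, comparing average energies and average overlaps between the two bisection-conditioned models; and (iii) combine disorder chaos with stability to obstruct sampling in $W_{2,n}$, following verbatim the corresponding step in the proof of Theorem~\ref{thm:main}.

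The main obstacle I anticipate is step (ii): pushing the interpolation through the bisection constraint. One must track the two bisection partition functions $Z_G^{\bis}(\beta)$ and its SK analogue along the interpolation path, verifying that the $A_n$-conditioning contributes only an additive $o(1)$ correction to the free-energy density which matches to leading order between the two sides, so that the derivative estimates on average energy and average overlap persist. This reduces to a concentration-of-magnetization statement: under either interpolant, the magnetization of a typical unrestricted configuration concentrates on scale $O(\sqrt n)$ uniformly in the interpolation parameter, so conditioning on $|\sum_i\sigma(i)|\le 1$ costs only $\Theta(\log n / n)$ in free-energy density and is negligible for the energy/overlap comparisons used in the paper. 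Once this verification is in place, the remainder of the argument is identical to the unrestricted case.
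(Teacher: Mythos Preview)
Your three-step outline matches the paper's approach, but you have mislocated the technical work. The interpolation through the bisection constraint (your step (ii)) is not an obstacle: the Dembo--Montanari--Sen interpolation (Lemma~\ref{lem:free_energy_interpolation}) is already stated and proved directly for the bisection partition functions, and in fact bisection is where the interpolation is \emph{cleanest}, since the error term $\sqrt{d}\,\langle m(\sigma)^2\rangle$ along the path is automatically $O(1/n^2)$. Similarly, the Stein--Chen calculation (Lemma~\ref{lem:graph_overlap_energy}) goes through verbatim for the bisection Gibbs average with the magnetization-squared term trivially bounded, so Proposition~\ref{lem:overlap_interpolation}(b) holds for all $\beta\in\mathbb{R}$ with no magnetization-concentration input whatsoever. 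Your proposed route of showing that conditioning on $A_n$ costs $O(\log n/n)$ uniformly along the interpolant is unnecessary on the sparse side, and would in fact fail for $\beta<0$: the unrestricted ferromagnetic sparse Gibbs measure concentrates near $\pm\mathbf{1}$, so magnetization does \emph{not} stay on scale $O(\sqrt{n})$.

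The place where bisection versus unrestricted must actually be compared is the SK side, in your step (i): the inputs Theorem~\ref{thm:sk_no_perturbation} and Theorem~\ref{thm:sk_pos_temp_chaos} are stated for the \emph{unrestricted} SK model, and one needs their bisection analogues. The paper does not re-run the Parisi variational analysis under the $A_n$ constraint as you propose. Instead it proves that the bisection and unrestricted SK free energies (including the coupled-system versions $Z^{S}_{\bg,\bg_t}$ and $Z^{S,\bis}_{\bg,\bg_t}$) agree to $o(n)$ by a probabilistic covering argument (Lemma~\ref{lem:z_cover}, Theorem~\ref{thm:bisection_free_energy}): one exhibits $O(n^3)$ sign patterns $\tau_i$ such that every pair $(\sigma_1,\sigma_2)$ lands in $A_n\times A_n$ after entrywise multiplication by some $\tau_i$, and combines this with concentration of $\log Z$. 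Equality of free energies then yields equality of expected squared overlaps via Griffith's lemma and the energy--overlap identity (Lemma~\ref{lemma:SK_overlap_energy}). This is considerably lighter than pushing the bisection constraint through the replica analysis.
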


Theorems~\ref{thm:main} and~\ref{thm:main_bis} exhibit a gap between search and sampling for the maximum cut and max/minimum bisection problem: while we have a search algorithm known under the widely believed ``No Overlap Gap'' conjecture~\cite{alaoui_local_2021} \footnote{\cite{alaoui_local_2021} stated the result for random regular graphs, but it can be transferred to other graph models using e.g. the argument in~\cite{chen_local_2023}.}, no stable algorithm can sample from the Gibbs distribution with arbitrary precision in the Wasserstein metric.  Put another way, the algorithm of~\cite{alaoui_local_2021} finds solutions of value $(1-\epsilon)\cdot \text{OPT}$ for any fixed $\epsilon>0$; Theorems~\ref{thm:main} and~\ref{thm:main_bis} rule out stable algorithms that sample solutions of these values approximately uniformly, since a standard reduction of counting to sampling (e.g~\cite{bezakova2008accelerating}) reduces the task of sampling from the Gibbs measure to sampling uniformly from solutions of a  given value.   
\subsection{Techniques}
\label{secTechniques}

To show that sampling from the anti-ferromagnetic Ising  Gibbs distribution on random graphs is hard for stable sampling algorithms, we use the framework that was used by~\cite{alaoui_sampling_2022} to show that sampling from the Sherrington-Kirkpatrick Gibbs distribution is hard. The key quantity in this approach is the \emph{overlap} between two configurations $\sigma_1, \sigma_2$,  defined by  
$$R_{1,2}(\sigma_1, \sigma_2) := \frac{1}{n}\sum_{i = 1}^n \sigma_1(i)\sigma_2(i) \, .$$
The framework consists of the following two steps:
\begin{enumerate}
\item Show that w.h.p. over the disorder, the average of the squared overlap $R_{1,2}(\sigma_1,\sigma_2)^2$ with respect to two independent samples $\sigma_1$ and $\sigma_2$ from the Gibbs distribution is at least some positive constant independent of $n$.  In particular, this is expected to hold when the Gibbs distribution exhibits replica symmetry breaking, which explains the threshold $\beta=1$.
\item Show that the Gibbs distribution exhibits \emph{disorder chaos}. That is, if we perturb the input (the random coupling constants in the case of the SK model or the random graph in the case of the Ising model) slightly (see Section~\ref{secOverview} for a formal definition of this perturbation) and then take a sample $\sigma_1$ from the Gibbs distribution for the original system and a sample $\sigma_2$ from the Gibbs distribution for the perturbed system, with high probability $R_{1,2}(\sigma_1,\sigma_2)$ is very close to zero. 
\end{enumerate}

Combining these two steps and using a connection between $R_{1,2}$ and $W_{2, n}$ distance (see Lemma~\ref{lem:overlap_to_w2}), we have that for arbitrarily small perturbations, the $W_{2,n}$ distance between the original Gibbs distribution and the  perturbed Gibbs distribution is at least some positive constant. However, for stable sampling algorithms, if we make the perturbation sufficiently small then the $W_{2,n}$ distance between the old and new output distributions can also be arbitrarily small. This implies that the output distribution of any stable sampling algorithm will have strictly positive $W_{2,n}$ distance from the target Gibbs distribution. 

In order to carry out this framework for sparse random graph models, we prove new \emph{structural properties} of the  Gibbs measures $\mu_{\beta/\sqrt{d}, G}$ and $\mu^\bis_{\beta/\sqrt{d}, G}$ and connect them to those of the Gibbs measures $\mu_{\beta, \bg}$ and $\mu^\bis_{\beta, \bg}$ of the SK model (see Section~\ref{secOverview} for the formal definition of $\mu_{\beta, \bg}$ and $\mu_{\beta, \bg}^\bis$). This further extends the close connection between the behavior of the SK model and the sparse random graph model. Previously, it was shown by Dembo, Montanari, and Sen~\cite{dembo_extremal_2017} that the \emph{free energy} of the sparse model converges to that of the SK model as the average degree $d$ increases to infinity. We first extend this to the average Hamiltonian of a sample drawn from the Gibbs distribution, showing that this quantity also converges in the same manner, regardless of the bisection restriction. This is done by using convexity of the free energy and a well-known observation that the average Hamiltonian can be obtained by taking derivative of the free energy (see \eqref{eqn:free_energy_to_average_energy} and \eqref{eqn:free_energy_to_average_energy_bis}). The main difficulty here is establishing that the limiting free energy is the same regardless of the bisection restriction, the details of which can be found in Appendix~\ref{section:free_energy_bisection}.

\begin{proposition}\label{prop:average_energy_SK_sparse_sec1} \ 
\begin{enumerate}
    \item[(a)] 
    For every $\beta \geq 0$, we have
    \begin{equation}
    \lim_{d \to \infty}\limsup_{n \to \infty} \frac{1}{n} \left| \E_{\bg}\left[\E_{\sigma \sim \mu_{\beta, \bg}}\left[ H_{\SK}(\sigma) \right]\right] - \frac{1}{\sqrt{d}}\E_{G}\left[\E_{\sigma \sim \mu_{\beta / \sqrt{d}, G}}\left[ H_{G}(\sigma) \right]\right] \right| = 0.
    \end{equation}
    \item[(b)] 
    For every $\beta \in \mathbb{R}$, we have
    \begin{equation}
    \lim_{d \to \infty}\limsup_{n \to \infty} \frac{1}{n} \left| \E_{\bg}\left[\E_{\sigma \sim \mu_{\beta, \bg}^\bis}\left[ H_{\SK}(\sigma) \right]\right] - \frac{1}{\sqrt{d}}\E_{G}\left[\E_{\sigma \sim \mu_{\beta / \sqrt{d}, G}^\bis}\left[ H_{G}(\sigma) \right]\right] \right| = 0.
    \end{equation}
\end{enumerate}
\end{proposition}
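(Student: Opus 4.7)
The plan is to derive Proposition~\ref{prop:average_energy_SK_sparse_sec1} from the already-known convergence of free energies via the classical convex-analysis principle that pointwise convergence of convex functions forces convergence of their derivatives wherever the limit is differentiable. Define the quenched free energies
\begin{equation}
\phi_{n,d}(\beta) := \frac{1}{n}\,\E_G\bigl[\log Z_G(\beta/\sqrt{d})\bigr], \qquad \phi_n^{\SK}(\beta) := \frac{1}{n}\,\E_{\bg}\bigl[\log Z^{\SK}_{\bg}(\beta)\bigr],
\end{equation}
both of which are convex in $\beta$ (log-partition functions are log-moment generating functions, and expectation preserves convexity). Differentiating under the sum yields
\begin{equation}
-\phi_{n,d}'(\beta) = \frac{1}{n\sqrt{d}}\,\E_G\bigl[\E_{\sigma\sim\mu_{\beta/\sqrt{d},G}}[H_G(\sigma)]\bigr], \quad -(\phi_n^{\SK})'(\beta) = \frac{1}{n}\,\E_{\bg}\bigl[\E_{\sigma\sim\mu_{\beta,\bg}}[H_{\SK}(\sigma)]\bigr],
\end{equation}
so the content of part~(a) is precisely that these two derivatives agree in the iterated limit $\lim_{d\to\infty}\limsup_{n\to\infty}$.

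Dembo--Montanari--Sen~\cite{dembo_extremal_2017} supply the pointwise free-energy identity $\lim_{d\to\infty}\lim_{n\to\infty}\phi_{n,d}(\beta) = \lim_{n\to\infty}\phi_n^{\SK}(\beta) =: \phi^{\SK}(\beta)$, and $\phi^{\SK}$ is itself finite and convex on $[0,\infty)$. I would then invoke the standard fact that if convex functions converge pointwise to a function $f$ on an open interval, then their one-sided derivatives sandwich those of $f$, and in particular full derivatives converge at every point where $f$ is differentiable. Applying this twice --- once along $n \to \infty$ at each fixed $d$, producing a convex limit $\phi_d$ that itself converges to $\phi^{\SK}$, and then along $d \to \infty$ --- gives $\limsup_{n}\phi_{n,d}'(\beta) \to (\phi^{\SK})'(\beta)$ at every $\beta \geq 0$ where $\phi^{\SK}$ is differentiable. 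Since $\phi^{\SK}$ is differentiable everywhere on $[0,\infty)$ (the Parisi free energy itself is smooth in $\beta$, even though its overlap structure has a transition at $\beta=1$), this yields part~(a); the countably many potential non-differentiability points can in any case be handled by continuity of the average energy in $\beta$.

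Part~(b) follows from the same machinery once the corresponding bisection-restricted free energies converge, i.e.,
\begin{equation}
\lim_{d\to\infty}\lim_{n\to\infty}\tfrac{1}{n}\,\E_G\bigl[\log Z^\bis_G(\beta/\sqrt{d})\bigr] \;=\; \lim_{n\to\infty}\tfrac{1}{n}\,\E_{\bg}\bigl[\log Z^{\bis,\SK}_{\bg}(\beta)\bigr],
\end{equation}
and one shows that this common limit coincides with $\phi^{\SK}$ (so that differentiability is inherited from the unconstrained limit). This free-energy identity for the bisection-restricted measure is the main technical obstacle and the reason the paper defers it to Appendix~\ref{section:free_energy_bisection}: the bisection constraint is a global linear constraint that interacts awkwardly with Guerra--Toninelli-style interpolation. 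The natural route is to realize the bisection-restricted measure as the zero-magnetization slice of the magnetized Gibbs measure with a tuned external field, and then use standard Legendre-duality / large-deviations reasoning to show that the bisection constraint costs only a sub-exponential factor in $Z$ (so the constrained and unconstrained free energies agree in the limit). Once this identity is in hand, convexity of $Z^\bis_G$ and $Z^{\bis,\SK}_{\bg}$ in $\beta$ is automatic, and the same convex-derivative argument yields the average-energy equality for every $\beta \in \mathbb{R}$.
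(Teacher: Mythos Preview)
Your core idea---convexity of the free energies in $\beta$, the identity $-\partial_\beta \Phi = \frac{1}{n}\E[\langle H\rangle]$, and the convergence-of-derivatives principle (Griffith's lemma)---is exactly what the paper uses. Two points of divergence are worth flagging.

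First, the order of (a) and (b) is reversed relative to the paper, and this matters. The Dembo--Montanari--Sen interpolation (Lemma~\ref{lem:free_energy_interpolation}) is stated for the \emph{bisection} free energies, because the non-bisection version carries an extra magnetization term $\sqrt{d}\,\langle m(\sigma)^2\rangle$ that is not obviously $o(1)$. So the paper proves (b) first, and then obtains (a) by additionally invoking Theorem~\ref{theorem:phi_sparse_limit_bis} to show $\Phi_{n,d}$ and $\Phi_{n,d}^\bis$ agree in the large-$d$ limit. Your write-up treats the non-bisection free energy convergence as directly available, which is not how the paper is set up.

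Second, your ``apply the derivative lemma twice'' scheme---first send $n\to\infty$ at fixed $d$ to get a convex limit $\phi_d$, then send $d\to\infty$---tacitly assumes that $\lim_{n\to\infty}\phi_{n,d}(\beta)$ exists for each fixed $d$. The paper does not establish (or need) this. Instead it argues by contradiction: if the conclusion failed at some $\beta_0$, one extracts a diagonal sequence $(d_i,n_i)$ with $d_i\to\infty$ and $n_i$ chosen large enough (using Lemma~\ref{lem:free_energy_interpolation}) so that the single family $\beta\mapsto \Phi^\bis_{n_i,d_i}(\beta/\sqrt{d_i})$ converges pointwise to the differentiable limit $\lim_n \Phi^\bis_{n,\SK}(\beta)=\lim_n\Phi_{n,\SK}(\beta)$, and then applies Griffith's lemma once to this sequence. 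This sidesteps the existence-of-limit issue entirely.

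Finally, your sketch of the bisection free-energy identity via an external field and Legendre duality is plausible but not what Appendix~\ref{section:free_energy_bisection} does: the paper uses a covering argument (a small family of sign-flips $\tau_1,\dots,\tau_{p(n)}$ such that every pair is carried into $A_n\times A_n$ by some $\tau_i$) combined with concentration of $\log Z$, rather than any field-tuning.
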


 Our main technical contribution is the following proposition which says that as the average degree $d$ goes to infinity, the expected average squared overlap between two independent samples from the Gibbs measure $\mu_{\beta / \sqrt{d}, G}$ converges to those sampled from the Gibbs measure $\mu_{\beta, \bg}$ of the SK model, again regardless of the bisection restriction. This gives the first ingredient in the sampling hardness framework. For the SK Hamiltonian, there is a known connection between the average Hamiltonian and the average squared overlap, which can be established using the Gaussian integration by parts trick (see e.g.~\cite{panchenko_sherrington-kirkpatrick_2013} for more details). In this paper, we show that one can use the Stein-Chen identity for the Poisson distribution (see Lemma~\ref{lemma:stein-chen}) in place of the Gaussian integration by parts trick to establish a similar connection for the sparse model in the large degree limit.

\begin{proposition}\label{prop:overlap_interpolation_sec1} \
    \begin{enumerate}
        \item[(a)] 
        For any $\beta \geq 0$, 
        \begin{equation}
        \lim_{d \to \infty}\limsup_{n \to \infty}\left|\E_\bg\left[\E_{\sigma_1, \sigma_2 \sim \mu_{\beta, \bg}}\left[R_{1,2}(\sigma_1, \sigma_2)^2\right]\right] - \E_G\left[\E_{\sigma_1, \sigma_2 \sim \mu_{\beta / \sqrt{d}, G}}\left[R_{1,2}(\sigma_1, \sigma_2)^2\right]\right]\right| = 0.
        \end{equation}        
        \item[(b)]  
    For any $\beta \in \mathbb{R}$,
        \begin{equation}
        \lim_{d \to \infty}\limsup_{n \to \infty}\left|\E_\bg\left[\E_{\sigma_1, \sigma_2 \sim \mu_{\beta, \bg}^\bis}\left[R_{1,2}(\sigma_1, \sigma_2)^2\right]\right] - \E_G\left[\E_{\sigma_1, \sigma_2 \sim \mu_{\beta / \sqrt{d}, G}^\bis}\left[R_{1,2}(\sigma_1, \sigma_2)^2\right]\right]\right| = 0.
        \end{equation}   
    \end{enumerate}
\end{proposition}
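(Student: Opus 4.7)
The plan is to prove both parts by deriving an approximate integration-by-parts identity for the sparse model that relates $\frac{1}{n\sqrt{d}}\E_G\E_\sigma[H_G(\sigma)]$ to $\E_G\E_{\sigma_1,\sigma_2}[R_{1,2}^2]$, matching the exact identity available for the SK model via Gaussian integration by parts, and then invoking Proposition~\ref{prop:average_energy_SK_sparse_sec1} to transfer the energy convergence to overlap convergence. For the SK model, Gaussian IBP gives
\[\frac{1}{n}\E_\bg \E_\sigma[H_{\SK}(\sigma)] = -\frac{\beta}{2}\bigl(1 - \E_\bg \E_{\sigma_1,\sigma_2}[R_{1,2}^2]\bigr),\]
and the same identity holds verbatim for the bisection-restricted measure $\mu_{\beta,\bg}^\bis$ since differentiation in $g_{ij}$ does not interact with the constraint $\sigma \in A_n$. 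Once an analogous identity with $o_d(1)$ error is established in the sparse setting, subtracting the two identities and invoking Proposition~\ref{prop:average_energy_SK_sparse_sec1} isolates the difference of squared overlaps and proves the claim for $\beta \neq 0$; the case $\beta = 0$ is trivial since both squared overlaps are $O(1/n)$.

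For the sparse analog, I would use that the multiplicities $M_{ij}$ are independent $\mathrm{Po}(d/n)$ (Poisson thinning) and apply Stein--Chen conditionally on the other multiplicities to get
\[\E_G[M_{ij}\langle\sigma(i)\sigma(j)\rangle_G] = \frac{d}{n}\E_G[\langle\sigma(i)\sigma(j)\rangle_{G + e_{ij}}],\]
where $G + e_{ij}$ is $G$ with one extra copy of edge $ij$. The key algebraic step, valid identically for $\mu_{\beta/\sqrt{d},G}$ and $\mu_{\beta/\sqrt{d},G}^\bis$, is the tanh shift
\[\langle\sigma(i)\sigma(j)\rangle_{G + e_{ij}} = \tanh\!\bigl(\mathrm{atanh}\,\langle\sigma(i)\sigma(j)\rangle_G - \beta/\sqrt{d}\bigr),\]
which follows from $e^{-\beta'\sigma(i)\sigma(j)} = \cosh\beta' - \sinh\beta'\,\sigma(i)\sigma(j)$ on $\{\pm 1\}$. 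Taylor expansion at $\beta' = \beta/\sqrt{d}$ gives $\langle\sigma(i)\sigma(j)\rangle_{G + e_{ij}} - \langle\sigma(i)\sigma(j)\rangle_G = -\beta'(1 - \langle\sigma(i)\sigma(j)\rangle_G^2) + O(\beta'^2)$. Summing $i<j$ and using $\sum_{i<j}\langle\sigma(i)\sigma(j)\rangle_G^2 = \tfrac{1}{2}(n^2 R_{1,2}^2 - n)$ after introducing a second replica yields
\[\frac{1}{n\sqrt{d}}\E_G \E_\sigma[H_G(\sigma)] = -\frac{\beta}{2}\bigl(1 - \E_G \E_{\sigma_1,\sigma_2}[R_{1,2}^2]\bigr) + E^{(1)}_{d,n} + E^{(2)}_{d,n},\]
with $E^{(1)}_{d,n} := \frac{\sqrt{d}}{2n^2}\E_G\bigl[\langle(\sum_i\sigma(i))^2\rangle_G - n\bigr]$ absorbing the zeroth-order Stein--Chen contribution and $E^{(2)}_{d,n}$ collecting the higher-order tanh remainder.

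The main obstacle is showing that $E^{(1)}_{d,n}$ and $E^{(2)}_{d,n}$ vanish in the iterated limit $\limsup_{n\to\infty}$ followed by $\lim_{d\to\infty}$. The remainder $E^{(2)}_{d,n}$ is routine: each of the $\binom{n}{2}$ pairs contributes $O(\beta'^2)$ with prefactor $d/n^2$, so its total is $O(d\beta'^2) = O(\beta^2)$, becoming $O(\beta^2/\sqrt{d}) = o_d(1)$ after dividing by $\sqrt{d}$. The subtler piece is $E^{(1)}_{d,n}$. In the bisection case of part (b), the constraint $|\sum_i\sigma(i)| \leq 1$ directly bounds $\langle(\sum_i\sigma(i))^2\rangle \leq 1$, giving $E^{(1)}_{d,n} = O(\sqrt{d}/n) \to 0$ as $n \to \infty$ with $d$ fixed. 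In the unrestricted case of part (a), the $\sigma \mapsto -\sigma$ symmetry of $H_G$ forces $\langle\sigma(i)\rangle_G = 0$ deterministically in $G$, so only the susceptibility bound $\E_G[\langle(\sum_i\sigma(i))^2\rangle] = O(n)$ is required at the high inverse temperature $\beta' = \beta/\sqrt{d}$; this should follow from a standard high-temperature second-moment or cluster-expansion argument, again yielding $E^{(1)}_{d,n} = O(\sqrt{d}/n) \to 0$. With both errors controlled, subtracting the sparse identity from the SK identity and applying Proposition~\ref{prop:average_energy_SK_sparse_sec1} produces the required convergence of expected squared overlaps.
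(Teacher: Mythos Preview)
Your overall architecture matches the paper's exactly: Gaussian integration by parts gives the SK energy--overlap identity, Stein--Chen plus a Taylor (or, equivalently, your tanh-shift) expansion gives the sparse analog up to a magnetization term and an $O(\beta^2/\sqrt{d})$ remainder, and then Proposition~\ref{prop:average_energy_SK_sparse_sec1} closes the argument. Your tanh-addition computation of $\langle\sigma_i\sigma_j\rangle_{G+e_{ij}}$ is a clean substitute for the paper's Taylor expansion of $\mu$ (their Lemma~4.4), and your bound on $E^{(2)}_{d,n}$ is correct.

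The gap is in your treatment of $E^{(1)}_{d,n}$ for part (a). The observation $\langle\sigma_i\rangle_G=0$ by global spin flip is correct but irrelevant: $E^{(1)}$ is governed by the pair correlations $\langle\sigma_i\sigma_j\rangle$, not single-spin means. More seriously, your proposed ``standard high-temperature second-moment or cluster-expansion argument'' for $\E_G[\langle(\sum_i\sigma_i)^2\rangle]=O(n)$ cannot work here. On a graph of average degree $d$ the convergence criterion for cluster expansion is $d\tanh\beta'<1$, i.e.\ $\beta'\lesssim 1/d$; but $\beta'=\beta/\sqrt{d}$ gives $d\tanh\beta'\approx\beta\sqrt{d}\to\infty$. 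Indeed, for $\beta>1$ the model is past the replica-symmetry-breaking threshold $\beta^\dag(d)\sim 1/\sqrt{d}$, so no high-temperature technique applies. The paper does not prove (and does not need) the $O(n)$ susceptibility bound. What is actually required is only $\lim_{d\to\infty}\sqrt{d}\,\limsup_{n\to\infty}\E[\langle m(\sigma)^2\rangle]=0$, and the paper obtains this (their Proposition~4.7) by an argument specific to the antiferromagnetic sign $\beta\ge 0$: for any $\sigma$ with $|m(\sigma)|>\epsilon d^{-1/4}$, a nearest bisection $\tau$ satisfies $H_d(\sigma)-H_d(\tau)\ge \tfrac{\epsilon^2\sqrt{d}n}{4}$ with probability $1-e^{-\Omega(d^{1/4}n)}$ over the graph (via Poisson tail bounds on edge counts between the flipped set and the two sides of $\tau$), forcing $\mu(\sigma)\le e^{-\Omega(n)}\mu(\tau)$. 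Summing over $\sigma$ then yields $\sqrt{d}\,\E[\langle m^2\rangle]\le\epsilon^2+o_n(1)$ once $d$ is large enough. You should replace your high-temperature handwave with this comparison argument.
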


 Finally, to obtain disorder chaos for the sparse model which is the second component in the sampling hardness framework, we use the fact that two configurations sampled from the coupled system for the SK model have nontrivial overlap with only exponentially small probability (see Theorem~\ref{thm:sk_pos_temp_chaos}). This causes a gap in free energy between the coupled system and uncoupled system, which can then be transferred to the sparse model using the interpolation result by~\cite{chen_disorder_2018, chen_suboptimality_2019}. We can then translate this gap back to a disorder chaos statement.

\subsection{Organization}
The paper is organized as follows. In Section~\ref{secOverview} we set up the notation and give a detailed overview of the proof. In Section~\ref{sec:average_energy} and Section~\ref{sec:average_overlap}, we establish the correspondence of average energy and average overlap between the two models. Finally in~Section~\ref{sec:disorder_chaos}, we transfer the disorder chaos property from the SK model to the sparse random graph models.

\section{Overview of the proof}
\label{secOverview}

\paragraph{Gibbs distributions}

For $\sigma \in \{-1, 1\}^n, \bX \in \mathbb{R}^{n \times n}$, we define the Hamiltonian
\[
H(\sigma; \bX) := \sum_{i, j = 1}^n \bX_{ij}\sigma(i)\sigma(j).
\]

For any $\beta \in \mathbb{R}$, it induces the following Gibbs distribution on $\{-1, +1\}^n$:
\[
 \mu_{\beta, \bX}(\sigma) = \frac{ \exp\left(-\beta H(\sigma; \bX)\right)}{Z(\beta, \bX)}, 
\]
where $Z(\beta, \bX) = \sum_{\sigma \in \{-1, 1\}^n} \exp\left(-\beta H(\sigma; \bX)\right)$ is the normalizing factor commonly referred to as the partition function. $\beta$ is commonly referred to as the \emph{inverse temperature} parameter in statistical physics. We also consider the restriction of the above distribution to bisections $\sigma \in A_n$, with
\[
 \mu^\bis_{\beta, \bX}(\sigma) = \frac{ \exp\left(-\beta H(\sigma; \bX)\right)}{Z^\bis(\beta, \bX)}, \quad Z^\bis(\beta, \bX) = \sum_{\sigma \in A_n} \exp\left(-\beta H(\sigma; \bX)\right).
\]
We refer to such restricted versions as bisection models. In statistical physics literature, this is sometimes also called the model with \emph{zero magnetization} and the version without the bisection restriction is called the model with \emph{non-fixed magnetization}. Here, the term magnetization refers to the quantity $m(\sigma) = \sum_{i = 1}^n \sigma(i) / n$. We sometimes also write  $\mu(\sigma; \beta, \bX)$ and $\mu^\bis(\sigma; \beta, \bX)$ instead of $\mu_{\beta, \bX}(\sigma)$ and $\mu^\bis_{\beta, \bX}(\sigma)$ if we wish to stress the dependence on $\beta$ and $\bX$.

$\bX$ is a random matrix drawn from some distribution and in this paper we consider two important cases. In the first case, $\bX = \bg$, where $\bg_{ij} \sim N(0, 1/2n)$ independently. This is known as the Sherrington-Kirkpatrick model. In the second case we have $\bX = \bA$, where each entry $\bA_{ij}$ is an independent $\Po(d/2n)$ random variable for some parameter $d > 0$. This case corresponds to sparse random (multi-)graphs with average degree $d$, where the vertex set is $[n] = \{1, 2, \ldots, n\}$ and the multiplicity of the edge $\{i, j\}$ is $\bA_{ij} + \bA_{ji}$ if $i \neq j$ and $\bA_{ii}$ if $i = j$ (note that here $\bA$ is not the adjacency matrix of the graph and can be asymmetric). Any configuration $\sigma$ can be thought of as the indicator vector for a cut where the vertices assigned 1 by $\sigma$ is on one side and those assigned $-1$ is on the other side, and $H(\sigma; \bA)$ is equal to the difference between the number of edges crossing the cut and the number of edges not crossing the cut. When $\beta > 0$, the Gibbs measure prefers those configurations that cut more edges, and this corresponds to the \emph{Maximum Cut} problem for the non-fixed magnetization model and the \emph{Maximum Bisection} problem for the zero-magnetization model. This case is known as the anti-ferromagnetic Ising model in the statistical physic literature. When $\beta < 0$, the non-fixed magnetization model corresponds to the \emph{Minimum Cut} problem while the zero-magnetization model corresponds to the \emph{Minimum Bisection} problem. This case is known as the ferromagnetic Ising model in the statistical physic literature.

For notational convenience, we sometimes drop the random matrix and write $H_{\SK}(\sigma) :=  H(\sigma; \bg)$ and $H_d(\sigma) := H(\sigma; \bA)$. We sometimes refer to these two models as \emph{dense} or \emph{sparse} models respectively.

\paragraph{Gibbs average} For any function $f: \{-1, 1\}^n \to \mathbb{R}$, we can define its average with respect to $\mu$ or $\mu^\bis$: 
\[
\langle f(\sigma) \rangle_{\beta, \bX} = \sum_{\sigma \in \{-1, 1\}^n}f(\sigma)\mu_{\beta, \bX}(\sigma), \quad \langle f(\sigma) \rangle_{\beta, \bX}^\bis = \sum_{\sigma \in A_n}f(\sigma)\mu^\bis_{\beta, \bX}(\sigma).
\]
 We can extend this definition to functions that take multiple configurations as inputs, in which case we assume that the configurations are sampled independently from the Gibbs distribution:
\[
\langle f(\sigma_1, \ldots, \sigma_k) \rangle_{\beta, \bX} = \sum_{\sigma_1, \ldots, \sigma_k \in \{-1, 1\}^n}\mu_{\beta, \bX}(\sigma_1)\cdots\mu_{\beta, \bX}(\sigma_k)f(\sigma_1, \ldots, \sigma_k), \quad f: \{-1, 1\}^{n\times k} \to \mathbb{R},
\]
and the average over $\mu^\bis$ is defined similarly. We sometimes drop the subscripts if they are clear from context. One function of particular interest to us is the \emph{overlap} between two configurations, defined as
\[
R_{1,2}(\sigma_1, \sigma_2)= \frac{1}{n}\sum_{i = 1}^n \sigma_1(i)\sigma_2(i).
\]
This function gives the normalized inner product between two configurations. If $\langle |R_{1,2}(\sigma_1, \sigma_2)| \rangle_{\beta,\bX}$ is close to zero, then two configurations independently sampled from the Gibbs distribution $\mu_{\beta, \bX}(\sigma)$ are nearly orthogonal.

\paragraph{Free energy}
The free energy for these models is defined as follows.
\[
\Phi_{n,\SK}(\beta) = \frac{1}{n} \E_{\bg} \left[\log Z(\beta, \bg)\right], \quad \Phi_{n,d}(\beta) = \frac{1}{n} \E_{\bA} \left[\log Z(\beta, \bA)\right].
\]
We can also define free energy in a similar way when restricted to bisections.
\[
\Phi_{n,\SK}^{\bis}(\beta) = \frac{1}{n} \E_{\bg} \left[\log Z^{\bis}(\beta, \bg)\right], \quad \Phi_{n,d}^{\bis}(\beta) = \frac{1}{n} \E_{\bA} \left[\log Z^{\bis}(\beta, \bA)\right].
\]

The following proposition is a simple consequence of Cauchy-Schwarz.
\begin{proposition}
    Both $\log Z(\beta, \bX)$ and $\log Z^\bis(\beta, \bX)$ are convex in $\beta$.
\end{proposition}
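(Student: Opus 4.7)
The plan is to differentiate $\log Z(\beta,\bX)$ twice in $\beta$ and recognize the result as the variance of the Hamiltonian under the Gibbs measure. Since $\{-1,1\}^n$ is finite, differentiation passes through the sum without issue, giving
\begin{equation}
\frac{d}{d\beta}\log Z(\beta,\bX) = -\langle H(\sigma;\bX)\rangle_{\beta,\bX},
\end{equation}
and a second differentiation via the quotient rule gives
\begin{equation}
\frac{d^2}{d\beta^2}\log Z(\beta,\bX) = \langle H(\sigma;\bX)^2\rangle_{\beta,\bX} - \langle H(\sigma;\bX)\rangle_{\beta,\bX}^2,
\end{equation}
which is the variance of $H(\sigma;\bX)$ under $\mu_{\beta,\bX}$. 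Non-negativity of this quantity is exactly Cauchy-Schwarz applied to $H(\sigma;\bX)$ and the constant function $1$ in the $L^2$ inner product weighted by $\mu_{\beta,\bX}$, so $\log Z(\beta,\bX)$ is convex in $\beta$.

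The bisection version requires no new work: replacing $\{-1,1\}^n$ by $A_n$ and $\mu_{\beta,\bX}$ by $\mu^\bis_{\beta,\bX}$ throughout the above calculation yields $\frac{d^2}{d\beta^2}\log Z^\bis(\beta,\bX) = \Var_{\mu^\bis_{\beta,\bX}}(H(\sigma;\bX)) \geq 0$. The proof is insensitive to which subset of configurations one sums over; it only uses that the partition function has the form $\sum_{\sigma \in S} e^{-\beta H(\sigma;\bX)}$.

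Alternatively, one could give a one-line proof via H\"older's inequality (which itself reduces to Cauchy-Schwarz via interpolation): for $\lambda \in [0,1]$ and $\beta = \lambda\beta_1 + (1-\lambda)\beta_2$, factor each summand as $(e^{-\beta_1 H(\sigma;\bX)})^\lambda (e^{-\beta_2 H(\sigma;\bX)})^{1-\lambda}$ and apply H\"older with exponents $(1/\lambda,1/(1-\lambda))$ to obtain $Z(\beta,\bX) \leq Z(\beta_1,\bX)^\lambda Z(\beta_2,\bX)^{1-\lambda}$; taking logarithms gives the claim. There is no substantive obstacle in either route — this is a textbook property of cumulant generating functions, and the main thing worth remarking on is simply that the bisection restriction drops out of the argument for free.
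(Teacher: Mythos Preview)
Your proposal is correct. The paper's own proof is essentially your alternative H\"older route specialized to $\lambda = 1/2$: it applies Cauchy--Schwarz directly to the sum to obtain $Z\!\left(\tfrac{\beta_1+\beta_2}{2},\bX\right) \le \sqrt{Z(\beta_1,\bX)Z(\beta_2,\bX)}$ and then takes logs, yielding midpoint convexity (which, combined with continuity, gives full convexity). Your primary route via the second derivative $\tfrac{d^2}{d\beta^2}\log Z = \Var_{\mu_{\beta,\bX}}(H)$ is a different but equally standard argument; it has the small advantage of giving $C^2$-convexity directly without appealing to the midpoint-plus-continuity step, while the paper's argument avoids differentiation entirely. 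Either way this is a textbook fact, and as you note the bisection restriction is irrelevant to the argument.
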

\begin{proof}
    For any $\beta_1, \beta_2$ we have
    \begin{align*}
    Z\left(\frac{\beta_1 + \beta_2}{2}, \bX\right) & = \sum_{\sigma \in \{-1, 1\}^n} \exp\left(-\frac{\beta_1 + \beta_2}{2} H(\sigma; \bX)\right) \\
    & \leq \sqrt{\left( \sum_{\sigma \in \{-1, 1\}^n} \exp\left(-\beta_1 H(\sigma; \bX)\right)\right)\left( \sum_{\sigma \in \{-1, 1\}^n} \exp\left(-\beta_2 H(\sigma; \bX)\right)\right)} \\
    & = \sqrt{Z(\beta_1, \bX)Z(\beta_2, \bX)}. \\
    \end{align*}
    Taking the logarithm of both sides, we get that $\log Z(\beta, \bX)$ is convex in $\beta$. The convexity of $\log Z^\bis(\beta, \bX)$ can be shown in the same way.
\end{proof}

By taking expectation over the randomness of $\bX$ we immediately obtain the following corollary.
\begin{corollary}
    $\Phi_{n,\SK}(\beta), \Phi_{n,\SK}^{\bis}(\beta), \Phi_{n,d}(\beta), \Phi_{n,d}^{\bis}(\beta)$ are all convex in $\beta$.
\end{corollary}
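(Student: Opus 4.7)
The corollary is an immediate consequence of the preceding proposition, so the plan is essentially a one-line observation: convexity in a parameter is preserved under non-negative linear combinations, and in particular under taking expectations. The hard part, such as it is, is just being careful about the order in which we assert convexity before averaging.

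Concretely, I would argue as follows. Fix $\lambda \in [0,1]$ and $\beta_1, \beta_2 \in \mathbb{R}$, and fix any realization of the random matrix $\bX$ (either $\bg$ in the SK case or $\bA$ in the sparse case). By the preceding proposition, $\log Z(\beta, \bX)$ is convex in $\beta$ for this fixed $\bX$, so
\[
\log Z\bigl(\lambda\beta_1 + (1-\lambda)\beta_2,\; \bX\bigr)
\;\leq\; \lambda \log Z(\beta_1, \bX) + (1-\lambda) \log Z(\beta_2, \bX).
\]
Because this inequality holds pointwise in $\bX$, I can take expectation over $\bX$ on both sides (which preserves the inequality since $\E$ is a positive linear operator) and divide by $n$. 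For $\bX = \bg$ this yields convexity of $\Phi_{n,\SK}(\beta)$, and for $\bX = \bA$ it yields convexity of $\Phi_{n,d}(\beta)$.

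The identical argument, using the second half of the preceding proposition (convexity of $\log Z^\bis(\beta, \bX)$ in $\beta$), gives convexity of $\Phi_{n,\SK}^\bis(\beta)$ and $\Phi_{n,d}^\bis(\beta)$. There is no real obstacle here: the only thing to check is that the expectations are finite so that the averaging step is justified, but this is immediate because $\{-1,1\}^n$ and $A_n$ are finite sets and the expectations of $\log Z$ and $\log Z^\bis$ are well-defined (both $\bg$ and $\bA$ have all moments, and the Hamiltonians are linear in the entries of $\bX$).
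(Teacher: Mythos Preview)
Your proof is correct and is essentially identical to the paper's own argument: the paper simply remarks that by taking expectation over the randomness of $\bX$ one immediately obtains the corollary from the preceding proposition, which is exactly the pointwise-convexity-then-average argument you spell out.
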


\paragraph{Correspondence between dense and sparse models}

It is known that as the average degree $d$ gets larger, the sparse model will ``converge'' to the dense model in some sense. Dembo, Montanari, and Sen proved this for the free energy of these two models using a clever interpolation argument:
\begin{lemma}[\cite{dembo_extremal_2017}]\label{lem:free_energy_interpolation}
    There exist constants $c_1, c_2, c_3 > 0$ independent of $n, \beta, d$ such that for any $\beta \in \mathbb{R}$, 
    \begin{equation}
    \left|\Phi_{n,d}^\bis \left(\frac{\beta}{\sqrt{d}}\right) - \Phi_{n,\SK}^\bis(\beta)\right| \leq c_1  \frac{|\beta|^3}{\sqrt{d}} + c_2 \frac{\beta^4}{d} + c_3 |\beta|\sqrt{d} \cdot \frac{1}{n^2}.\footnote{The $c_3 |\beta|\sqrt{d} \cdot \frac{1}{n^2}$ term is actually given by $c_3 \sqrt{d} \cdot \langle m(\sigma )^2\rangle_t$ where $m(\sigma ) = \frac{1}{n}\sum_{i}\sigma_i$ is the \emph{average magnetization} and $\langle \cdot \rangle_t$ denotes the expectation with respect to some Gibbs measure interpolating between $\mu^\bis(\sigma; \beta, \bg)$ and $\mu^\bis(\sigma; \beta/\sqrt{d}, \bA)$. Since $\sigma$ is a bisection, this term is 0 when $n$ is even and $1/n^2$ when $n$ is odd. In~\cite{dembo_extremal_2017} the authors assumed that $n$ is even for simplicity. }
    \end{equation}
    
\end{lemma}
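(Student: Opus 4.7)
The plan is to use a Guerra-style interpolation that preserves the effective variance per coupling while moving it between Gaussian and Poisson environments. Let $g_{ij} \sim N(0, 1/(2n))$ and $A^{(t)}_{ij} \sim \Po(td/(2n))$ be independent, and for $t \in [0,1]$ set
\begin{equation*}
H_t(\sigma) := \beta\sqrt{1-t}\sum_{i,j=1}^n g_{ij}\sigma(i)\sigma(j) + \frac{\beta}{\sqrt d}\sum_{i,j=1}^n A^{(t)}_{ij}\sigma(i)\sigma(j), \qquad \Phi_t^\bis := \frac{1}{n}\E\log Z_t^\bis,
\end{equation*}
where $Z_t^\bis = \sum_{\sigma \in A_n}e^{-H_t(\sigma)}$. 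Since the variance of the coefficient of $\sigma(i)\sigma(j)$ equals $\beta^2/(2n)$ uniformly in $t$, we have $\Phi_0^\bis = \Phi_{n,\SK}^\bis(\beta)$ and $\Phi_1^\bis = \Phi_{n,d}^\bis(\beta/\sqrt d)$, and it suffices to bound $|\tfrac{d}{dt}\Phi_t^\bis|$ uniformly in $t$ by the claimed right-hand side.

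The derivative splits into a Gaussian piece and a Poisson piece. For the Gaussian piece, differentiating $\sqrt{1-t}$ and applying Gaussian integration by parts $\E[g_{ij}F] = (2n)^{-1}\E[\partial_{g_{ij}}F]$, together with the Ising identity $\sigma(i)^2 = 1$ and the replica identity $\sum_{i,j}\langle\sigma(i)\sigma(j)\rangle^2 = n^2\langle R_{1,2}(\sigma_1,\sigma_2)^2\rangle$, yields
\begin{equation*}
\tfrac{d}{dt}\E\log Z_t^\bis\big|_{\mathrm{Gauss}} = -\tfrac{n\beta^2}{4}\bigl(1 - \E\langle R_{1,2}^2\rangle_t^\bis\bigr).
\end{equation*}
For the Poisson piece, the Stein--Chen identity $\tfrac{d}{d\lambda}\E F(X) = \E[F(X+1) - F(X)]$ for $X \sim \Po(\lambda)$ applies coordinatewise. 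Incrementing $A^{(t)}_{ij}$ by one multiplies the Boltzmann weight by $e^{-(\beta/\sqrt d)\sigma(i)\sigma(j)} = \cosh(\beta/\sqrt d) - \sinh(\beta/\sqrt d)\sigma(i)\sigma(j)$ (using $\sigma(i)\sigma(j) \in \{\pm 1\}$), so
\begin{equation*}
\tfrac{d}{dt}\E\log Z_t^\bis\big|_{\mathrm{Pois}} = \tfrac{d}{2n}\sum_{i,j=1}^n \E\log\!\bigl[\cosh(\beta/\sqrt d) - \sinh(\beta/\sqrt d)\langle\sigma(i)\sigma(j)\rangle_t^\bis\bigr].
\end{equation*}

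Taylor expanding the logarithm in $\beta/\sqrt d$ produces three significant terms: (i) a constant contribution $n\beta^2/4$ that cancels exactly against the corresponding Gaussian term; (ii) a quadratic-in-$\langle\sigma(i)\sigma(j)\rangle$ contribution $-\tfrac{n\beta^2}{4}\E\langle R_{1,2}^2\rangle_t^\bis$ (again by the replica identity) that cancels the second Gaussian term; and (iii) a linear-in-$\langle\sigma(i)\sigma(j)\rangle$ contribution equal to $-\tfrac{\beta\sqrt d}{2n}\E\langle(\sum_i\sigma(i))^2\rangle_t^\bis$. The bisection constraint $|\sum_i\sigma(i)| \leq 1$ forces the last quantity to equal $0$ for even $n$ and to be bounded by $1$ for odd $n$, producing the $c_3|\beta|\sqrt d/n^2$ term after dividing by $n$. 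All further Taylor terms are bounded uniformly for $\langle\sigma(i)\sigma(j)\rangle \in [-1,1]$ by expressions of order $|\beta|^3/d^{3/2}$ and $\beta^4/d^2$; after the factor of $nd$ from $\tfrac{d}{2n}\sum_{i,j}$, dividing by $n$, and integrating in $t$, these contribute the $c_1|\beta|^3/\sqrt d$ and $c_2\beta^4/d$ terms.

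The main obstacle is verifying the exact second-order cancellation between the Gaussian IBP and the $\log\cosh$ expansion. It is not automatic: both the Gaussian variance-change identity and the second Taylor coefficient of the Poisson weight reduce to expressions involving only the squared overlap, and this is only because $\sigma(i)^2 = 1$ collapses $\langle\sigma(i)^2\sigma(j)^2\rangle$ to $1$ in the Gaussian calculation while the identity $e^{\pm x} = \cosh x \pm \sinh x$ collapses the Poisson jump exactly. Without this alignment one would be left with an $O(\beta^2)$ residual, which cannot be absorbed into the bound. The bisection restriction plays a role only in controlling the surviving linear-magnetization term via $|\sum_i\sigma(i)|\leq 1$.
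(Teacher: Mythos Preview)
The paper does not supply its own proof of this lemma; it is quoted directly from \cite{dembo_extremal_2017} and used as a black box. Your proposal is essentially the interpolation argument of that reference: interpolate the disorder between Gaussian and Poisson while keeping the per-edge variance fixed at $\beta^2/(2n)$, differentiate using Gaussian integration by parts on one side and the Poisson add-one identity on the other, and observe that the second-order terms in both computations collapse to $\pm\frac{n\beta^2}{4}(1-\E\langle R_{1,2}^2\rangle_t)$ and cancel, leaving only the linear magnetization term (controlled by the bisection constraint) and the cubic/quartic Taylor remainders. This is correct and matches the cited source.

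One small point worth tightening: your sentence ``All further Taylor terms are bounded uniformly for $\langle\sigma(i)\sigma(j)\rangle \in [-1,1]$ by expressions of order $|\beta|^3/d^{3/2}$ and $\beta^4/d^2$'' is only immediate when $|\beta|/\sqrt d$ is bounded. The clean way to make this uniform in $\beta,d$ is to write $\cosh x - s\sinh x = \cosh x\,(1 - s\tanh x)$ so that the logarithm splits as $\log\cosh x + \log(1 - s\tanh x)$; since $|\tanh x|<1$ the second series converges absolutely with tails controlled by $\tanh^3(\beta/\sqrt d)$ and $\tanh^4(\beta/\sqrt d)$, which are dominated by $|\beta|^3/d^{3/2}$ and $\beta^4/d^2$ respectively, and the first piece is handled analogously. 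With that adjustment the constants are genuinely independent of $n,\beta,d$ as stated.
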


In this paper, we further extend this correspondence to the average energy $\langle H(\sigma)\rangle$ as well as the average overlap $\langle R_{1,2}(\sigma_1, \sigma_2)^2 \rangle$.
\begin{proposition}[Restatement of Proposition~\ref{prop:average_energy_SK_sparse_sec1}] \label{lem:average_energy_SK_sparse}\ 
\begin{enumerate}
    \item[(a)] 
    For every $\beta \geq 0$, we have
    \begin{equation}
    \lim_{d \to \infty}\limsup_{n \to \infty} \frac{1}{n} \left| \E_{\bg}[\langle H_{\SK}(\sigma) \rangle_{\beta, \bg}] - \frac{1}{\sqrt{d}}\E_{\bA}[\langle H_{d}(\sigma) \rangle_{\beta/\sqrt{d}, \bA}] \right| = 0.
    \end{equation}
    \item[(b)] 
    For every $\beta \in \mathbb{R}$, we have
    \begin{equation}
    \lim_{d \to \infty}\limsup_{n \to \infty} \frac{1}{n} \left| \E_{\bg}[\langle H_{\SK}(\sigma) \rangle_{\beta, \bg}^\bis] - \frac{1}{\sqrt{d}}\E_{\bA}[\langle H_{d}(\sigma) \rangle_{\beta/\sqrt{d}, \bA}^\bis] \right| = 0.
    \end{equation}
\end{enumerate}
\end{proposition}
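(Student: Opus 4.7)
The plan is to reduce the proposition to Lemma~\ref{lem:free_energy_interpolation} via the standard thermodynamic identity $\frac{1}{n}\E_\bX[\langle H(\sigma;\bX)\rangle_{\beta,\bX}] = -\frac{d}{d\beta}\Phi_n(\beta)$, obtained by differentiating $\log Z(\beta,\bX)$ in $\beta$ and averaging. Introducing the rescaled free energy $\psi_{n,d}^{\bis}(\beta) := \Phi_{n,d}^{\bis}(\beta/\sqrt d)$, the chain rule gives $\psi_{n,d}^{\bis\prime}(\beta) = -\tfrac{1}{n\sqrt d}\,\E_\bA[\langle H_d\rangle^{\bis}_{\beta/\sqrt d,\bA}]$, so Part~(b) is equivalent to $\psi_{n,d}^{\bis\prime}(\beta) - \Phi_{n,\SK}^{\bis\prime}(\beta) \to 0$ in the prescribed double limit. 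Part~(a) is identical after dropping the bisection restriction and invoking the non-bisection analog of Lemma~\ref{lem:free_energy_interpolation} (valid for $\beta\geq 0$).

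The key tool is a convex-analysis transfer from $C^0$-closeness to closeness of derivatives. Both $\psi_{n,d}^{\bis}$ and $\Phi_{n,\SK}^{\bis}$ are convex in $\beta$, and Lemma~\ref{lem:free_energy_interpolation} bounds their sup-distance on any compact $\beta$-window by $\varepsilon(\beta,d) + O(1/n^2)$ with $\varepsilon(\beta,d)\to 0$ as $d\to\infty$. Combining the standard sandwich $\frac{f(\beta)-f(\beta-h)}{h}\leq f'(\beta)\leq \frac{f(\beta+h)-f(\beta)}{h}$ for convex $f$ with the swap $f\leftrightarrow g$ inside each secant at cost $2\varepsilon/h$ yields, for any $h>0$,
\[
|f'(\beta)-g'(\beta)| \;\leq\; \frac{2\varepsilon}{h} \;+\; \frac{\Delta_h^2 f(\beta) + \Delta_h^2 g(\beta)}{h},
\]
where $\Delta_h^2 f(\beta) := f(\beta+h) - 2f(\beta) + f(\beta-h) \geq 0$. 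Applying this with $f = \Phi_{n,\SK}^{\bis}$ and $g = \psi_{n,d}^{\bis}$ reduces the proposition to controlling the two second-difference terms in the double limit, followed by $h\to 0$.

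I would take the limits in the prescribed order. First, $\limsup_{n\to\infty}$: pointwise convergence of convex functions is automatically locally uniform, so $\Delta_h^2 \Phi_{n,\SK}^{\bis}(\beta)$ and $\Delta_h^2 \psi_{n,d}^{\bis}(\beta)$ converge to their infinite-volume counterparts $\Delta_h^2 \Phi_\infty^{\bis}(\beta)$ and $\Delta_h^2 \psi_\infty^{(d),\bis}(\beta)$. Next, $\lim_{d\to\infty}$ kills $\varepsilon(d)/h$ and, by another application of Lemma~\ref{lem:free_energy_interpolation}, sends $\psi_\infty^{(d),\bis}\to \Phi_\infty^{\bis}$ pointwise, so the upper bound collapses to $2\Delta_h^2 \Phi_\infty^{\bis}(\beta)/h$. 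Finally, sending $h\to 0$, differentiability of $\Phi_\infty^{\bis}$ at $\beta$ gives $\Delta_h^2 \Phi_\infty^{\bis}(\beta) = o(h)$, completing the proof.

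The chief obstacle, which the authors flag as the main difficulty for this proposition, is ensuring that the infinite-volume limit $\Phi_\infty^{\bis}$ is a well-understood, differentiable object at every $\beta$. The cleanest route is to show that the bisection restriction does not alter the $n\to\infty$ limit of the SK free energy, so that $\Phi_\infty^{\bis}$ coincides with the Parisi limit and inherits its $C^1$-regularity in $\beta$; this identification is supplied in Appendix~\ref{section:free_energy_bisection}. An exceptional $\beta$ at which differentiability might fail could alternatively be handled by approximating from differentiability points (a dense set, by convexity) combined with continuity in $\beta$ of the finite-$n$ average energy.
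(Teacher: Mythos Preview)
Your approach is essentially the paper's: both reduce to convergence of free-energy derivatives via convexity, with the paper packaging this as Griffith's lemma (Proposition~\ref{prop:griffiths_lemma}) applied along a diagonal sequence $(n_i,d_i)$ while you unpack the same secant-sandwich inequality explicitly. For part~(b) your argument is complete, modulo one minor point below.

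The gap is in your treatment of part~(a). You invoke a ``non-bisection analog of Lemma~\ref{lem:free_energy_interpolation} (valid for $\beta\ge 0$)'', but the paper supplies no such direct analog, and its footnote to Lemma~\ref{lem:free_energy_interpolation} explains why: without the bisection constraint the Dembo--Montanari--Sen interpolation carries an extra term $c_3\sqrt d\,\langle m(\sigma)^2\rangle_t$, where the Gibbs average is with respect to an \emph{interpolated} measure between the SK and sparse models, and this is not a priori $o(1)$ even when $\beta\ge 0$. The paper's route to~(a) is instead to stay in the bisection setting throughout and then transfer at the level of free energies: Theorem~\ref{theorem:phi_sparse_limit_bis} gives $|\Phi_{n,d}(\beta/\sqrt d)-\Phi_{n,d}^{\bis}(\beta/\sqrt d)|\to 0$ uniformly on compact $\beta$-intervals in $[0,\infty)$ as $d\to\infty$, and Corollary~\ref{cor:SK_bis_free_energy} already handles the SK side in part~(b). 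Chaining these with Lemma~\ref{lem:free_energy_interpolation} does yield the non-bisection free-energy interpolation you want, and this is exactly where the restriction $\beta\ge 0$ enters; that derivation should be spelled out rather than cited as an unstated lemma.

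A smaller point: you assume $\psi_{n,d}^{\bis}$ has a pointwise $n\to\infty$ limit for each fixed $d$, which the paper does not establish. This is easily bypassed---either bound $\Delta_h^2\psi_{n,d}^{\bis}\le \Delta_h^2\Phi_{n,\SK}^{\bis}+4\varepsilon_{n,d}$ directly from Lemma~\ref{lem:free_energy_interpolation} before taking $\limsup_n$, or follow the paper and extract a single diagonal sequence $(n_i,d_i)$ along which convergence to the Parisi limit is immediate from Lemma~\ref{lem:free_energy_interpolation}.
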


\begin{proposition}[Restatement of Proposition~\ref{prop:overlap_interpolation_sec1}] \label{lem:overlap_interpolation} \
    \begin{enumerate}
        \item[(a)] 
        For any $\beta \geq 0$, 
        \begin{equation}
        \lim_{d \to \infty}\limsup_{n \to \infty}\left|\E_\bg[\langle R_{1,2}^2\rangle_{\beta, \bg}] - \E_\bA[\langle R_{1,2}^2\rangle_{\beta / \sqrt{d}, \bA}]\right| = 0.
        \end{equation}        
        \item[(b)]  
    For any $\beta \in \mathbb{R}$,
    \begin{equation}
        \lim_{d \to \infty}\limsup_{n \to \infty}\left|\E_\bg[\langle R_{1,2}^2\rangle_{\beta, \bg}^\bis] - \E_\bA[\langle R_{1,2}^2\rangle_{\beta / \sqrt{d}, \bA}^\bis]\right| = 0.
    \end{equation}
    \end{enumerate}
\end{proposition}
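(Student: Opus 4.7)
The plan is to derive, for each of the two models, an identity tying the expected average Hamiltonian to the expected average squared overlap, and then invoke Proposition~\ref{lem:average_energy_SK_sparse} to transfer the correspondence from the former to the latter. Gaussian integration by parts yields a clean identity on the SK side; the Stein-Chen identity for the Poisson distribution (Lemma~\ref{lemma:stein-chen}) gives an analogous identity on the sparse side that agrees with the SK version up to $O(1/\sqrt{d})$ error in the large-$d$ limit.

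For the SK model, I would expand $\E_\bg[\langle H_{\SK}(\sigma)\rangle_\beta] = \sum_{i,j}\E_\bg[\bg_{ij}\langle\sigma(i)\sigma(j)\rangle_\beta]$ and apply Gaussian integration by parts to each term. Since $\bg_{ij}\sim N(0, 1/(2n))$ and, using $\sigma(i)^2 = 1$, $\partial_{\bg_{ij}}\langle\sigma(i)\sigma(j)\rangle = -\beta(1 - \langle\sigma(i)\sigma(j)\rangle^2)$, summing over $(i,j)$ and recognizing $\sum_{ij}\langle\sigma(i)\sigma(j)\rangle^2 = n^2\langle R_{1,2}^2\rangle$ produces the exact identity
\[
\frac{1}{n}\E_\bg[\langle H_{\SK}(\sigma)\rangle_\beta] \;=\; \frac{\beta}{2}\bigl(\E_\bg[\langle R_{1,2}^2\rangle_{\beta,\bg}] - 1\bigr),
\]
and the same derivation holds verbatim for $\mu^\bis_{\beta,\bg}$.

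For the sparse model, I would apply the Stein-Chen identity $\E[Xf(X)] = \lambda\E[f(X+1)]$ to each $\bA_{ij}\sim \Po(d/(2n))$, obtaining
\[
\E_\bA\bigl[\bA_{ij}\langle\sigma(i)\sigma(j)\rangle_{\beta/\sqrt{d},\bA}\bigr] \;=\; \frac{d}{2n}\,\E_\bA\bigl[\langle\sigma(i)\sigma(j)\rangle_{\beta/\sqrt{d},\,\bA+e_{ij}}\bigr],
\]
where $\bA+e_{ij}$ denotes $\bA$ with its $(i,j)$ entry increased by $1$. Incrementing $\bA_{ij}$ by $1$ reweights each $\sigma$ by the factor $e^{-(\beta/\sqrt{d})\sigma(i)\sigma(j)} = \cosh(\beta/\sqrt{d}) - \sinh(\beta/\sqrt{d})\sigma(i)\sigma(j)$ (using $\sigma(i)\sigma(j)\in\{\pm 1\}$), giving the closed form
\[
\langle\sigma(i)\sigma(j)\rangle_{\beta/\sqrt{d},\,\bA+e_{ij}} \;=\; \frac{x_{ij}-t}{1-tx_{ij}}, \qquad x_{ij} := \langle\sigma(i)\sigma(j)\rangle_{\beta/\sqrt{d},\bA},\quad t := \tanh(\beta/\sqrt{d}).
\]
Using the algebraic identity $\tfrac{x-t}{1-tx} = x + (x^2-1)\tfrac{t}{1-tx}$, summing over $(i,j)$, and recognizing $\sum_{ij}x_{ij} = n^2\langle m(\sigma)^2\rangle$ and $\sum_{ij}x_{ij}^2 = n^2\langle R_{1,2}^2\rangle$, I obtain
\[
\frac{1}{n\sqrt{d}}\E_\bA[\langle H_d(\sigma)\rangle_{\beta/\sqrt{d}}] \;=\; \frac{\sqrt{d}}{2}\E_\bA[\langle m(\sigma)^2\rangle_{\beta/\sqrt{d},\bA}] + \frac{\beta}{2}\bigl(\E_\bA[\langle R_{1,2}^2\rangle_{\beta/\sqrt{d},\bA}] - 1\bigr) + O(1/\sqrt{d}),
\]
using $\sqrt{d}\tanh(\beta/\sqrt{d}) = \beta + O(1/d)$ and uniformly bounding the residual $(x^2-1)\tfrac{t^2x}{1-tx}$ by $\tfrac{|t|}{1-|t|} = O(1/\sqrt{d})$. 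The same identity holds for $\mu^\bis_{\beta/\sqrt{d},\bA}$.

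Combining the two identities with Proposition~\ref{lem:average_energy_SK_sparse}, the right-hand sides must agree up to $o(1)$ in the double limit $\lim_{d\to\infty}\limsup_{n\to\infty}$. In the bisection case~(b), $\langle m(\sigma)^2\rangle^\bis \leq 1/n^2$ deterministically, so the magnetization term satisfies $\sqrt{d}\,\E[\langle m^2\rangle^\bis] \leq \sqrt{d}/n^2 \to 0$ as $n\to\infty$ for each fixed $d$, and the desired overlap correspondence follows. The main obstacle is the non-fixed-magnetization case~(a), where the extra term $\tfrac{\sqrt{d}}{2}\E[\langle m(\sigma)^2\rangle_{\beta/\sqrt{d},\bA}]$ does not vanish trivially: unlike Gaussian IBP, the Stein-Chen identity inherently produces a magnetization contribution. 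I would control this by exploiting the $\mathbb{Z}_2$ spin-flip symmetry of $H_d$ together with a concentration argument for the magnetization at the small effective inverse temperature $\beta/\sqrt{d}$ (e.g., Dobrushin-type correlation decay, or a direct comparison with the bisection model via fluctuations of $m(\sigma)$), to conclude $\sqrt{d}\,\E[\langle m(\sigma)^2\rangle_{\beta/\sqrt{d},\bA}] \to 0$ in the double limit.
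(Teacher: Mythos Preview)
Your plan matches the paper's: Gaussian integration by parts for SK (Lemma~\ref{lemma:SK_overlap_energy}), Stein--Chen for the sparse model (Lemma~\ref{lem:graph_overlap_energy}), then invoke Proposition~\ref{lem:average_energy_SK_sparse}. Your closed-form evaluation $\langle\sigma(i)\sigma(j)\rangle_{\bA+e_{ij}}=(x_{ij}-t)/(1-tx_{ij})$ followed by the algebraic expansion $\tfrac{x-t}{1-tx}=x+t(x^2-1)+O(t^2)$ is a clean alternative to the paper's route, which instead Taylor-expands $\mu(\sigma;\beta/\sqrt{d},\bA+\bJ_{ij})$ to first order in $\bA_{ij}$ and bounds the second derivative by $\tfrac{6\beta^2}{d}\mu$ (Lemma~\ref{lemma:taylor_series_for_mu} together with Proposition~\ref{prop:xi_estimate}). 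Both yield the same identity in Lemma~\ref{lem:graph_overlap_energy}, and your handling of part~(b) via $\langle m^2\rangle^\bis\le 1/n^2$ is exactly right.

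The genuine gap is the control of $\sqrt{d}\,\E[\langle m(\sigma)^2\rangle_{\beta/\sqrt{d},\bA}]$ in part~(a), which you correctly flag as the main obstacle but whose proposed fixes do not work. The $\mathbb{Z}_2$ spin-flip symmetry gives only $\langle m\rangle=0$ pointwise in $\bA$, saying nothing about $\langle m^2\rangle$. Dobrushin-type correlation decay fails because the effective interaction strength on a degree-$d$ graph at inverse temperature $\beta/\sqrt{d}$ is $d\tanh(\beta/\sqrt{d})\approx\beta\sqrt{d}\to\infty$, far outside any uniqueness regime. The paper's argument (Proposition~\ref{prop:magnetization_zero}) is a concrete version of your ``direct comparison with the bisection model'': for $\sigma$ with $|m(\sigma)|>\epsilon/d^{1/4}$, the nearest bisection $\tau$ satisfies $H_d(\sigma)-H_d(\tau)\ge\epsilon^2\sqrt{d}n/4$ except with probability $\exp(-\Omega(d^{1/4}n))$, by Poisson concentration on the edge counts across the flipped coordinates. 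Hence $\mu_{\beta/\sqrt{d},\bA}(\sigma)\le\mu_{\beta/\sqrt{d},\bA}(\tau)\,e^{-\beta\epsilon^2 n/4}$ on that event; since all bisections are exchangeable, $\E[\mu(\tau)]\le 1/|A_n|$, and summing over the $2^n$ configurations gives $\sqrt{d}\,\E[\langle m^2\rangle]\le\epsilon^2+o_n(1)$ for $d$ large.
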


We prove Proposition~\ref{lem:average_energy_SK_sparse} in Section~\ref{sec:average_energy} and Proposition~\ref{lem:overlap_interpolation} in Section~\ref{sec:average_overlap}.

\paragraph{Disorder chaos and stable algorithms}

Disorder chaos is a well-studied phenomenon in spin glass theory. The term ``disorder'' refers to the random matrix $\bX$, and ``chaos'' describes what happens to the Gibbs distribution if we slightly perturb $\bX$. For the SK model,  consider the following notion of perturbation. Let $\bg, \bg'$ be two independent copies of Gaussian matrices where $\bg_{ij}, \bg_{ij}' \sim N(0, 1/2n)$ independently for each $i, j$. Given any perturbation parameter $t \geq 0$, we can consider the two measures $\mu_{\beta, \bg}(\sigma)$ and  $\mu_{\beta, \bg_t}(\sigma)$ where $\bg_t = \sqrt{1 - t}\bg + \sqrt{t}\bg'$. Define $\langle f(\sigma_1, \sigma_2) \rangle_{\beta, \bg, \bg_t}$ to be the average of $f(\sigma_1, \sigma_2)$ where $\sigma_1 \sim \mu_{\beta, \bg}$ and $\sigma_2 \sim \mu_{\beta, \bg_t}$ sampled independently from these two distributions respectively, i.e.,
\[
\langle f(\sigma_1, \sigma_2) \rangle_{\beta, \bg, \bg_t} = \sum_{\sigma_1, \sigma_2 \in \{-1, 1\}^n}f(\sigma_1, \sigma_2)\mu_{\beta, \bg}(\sigma_1)\mu_{\beta, \bg_t}(\sigma_2).
\]

It is known that two such samples $\sigma_1, \sigma_2$ will likely have some nontrivial overlap when $t = 0$, in which case there is no perturbation and $\langle f(\sigma_1, \sigma_2) \rangle_{\beta, \bg, \bg_t}$ is simply $\langle f(\sigma_1, \sigma_2) \rangle_{\beta, \bg}$.
\begin{theorem}[See e.g.~\cite{alaoui_sampling_2022}]\label{thm:sk_no_perturbation}
    If $|\beta| > 1$, then there exists $\epsilon = \epsilon(\beta) > 0$ such that
    \begin{equation}\label{eqn:overlap_sk_nonzero}
    \lim_{n \to \infty}\E_{\bg}\left[\langle R_{1,2}(\sigma_1, \sigma_2)^2 \rangle_{\beta, \bg}\right] \geq \epsilon(\beta).
    \end{equation}
\end{theorem}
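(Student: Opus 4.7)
The plan is to reduce the overlap lower bound to a strict-inequality statement about the limiting SK free energy, via the standard overlap--free energy identity, and then invoke the Parisi formula to supply this inequality. Since $\bg$ and $-\bg$ are equidistributed, the $\beta<-1$ case reduces to the $\beta>1$ case by a sign change, so I would only treat $\beta>1$.

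The first step is Gaussian integration by parts. Applying Stein's identity to $\E_\bg[\bg_{ij}\langle \sigma(i)\sigma(j)\rangle_{\beta,\bg}]$ and computing
\[
\partial_{\bg_{ij}}\langle \sigma(i)\sigma(j)\rangle_{\beta,\bg} = -\beta\bigl(1 - \langle \sigma(i)\sigma(j)\rangle_{\beta,\bg}^2\bigr),
\]
then summing over $(i,j)$ and using the replica identity $\sum_{i,j}\langle \sigma(i)\sigma(j)\rangle^2 = n^2 \langle R_{1,2}^2\rangle$, yields
\[
\frac{1}{n}\E_\bg\bigl[\langle H_{\SK}(\sigma)\rangle_{\beta,\bg}\bigr] = -\frac{\beta}{2}\bigl(1 - \E_\bg[\langle R_{1,2}^2\rangle_{\beta,\bg}]\bigr).
\]
Combined with the standard identity $\Phi_{n,\SK}'(\beta) = -\frac{1}{n}\E_\bg[\langle H_{\SK}\rangle]$, this rearranges to $\E_\bg[\langle R_{1,2}^2\rangle_{\beta,\bg}] = 1 - 2\Phi_{n,\SK}'(\beta)/\beta$, reducing the task to showing that $\Phi_{n,\SK}'(\beta) \to \Phi'(\beta) < \beta/2$ in the limit.

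The second step invokes the Parisi formula (Guerra--Talagrand): $\Phi_{n,\SK}\to \Phi$ pointwise, with $\Phi(\beta) = \log 2 + \beta^2/4$ on $[0,1]$ and $\Phi(\beta) < \log 2 + \beta^2/4$ strictly for $\beta>1$. Explicitly, one has $\Phi'(\beta) = \tfrac{\beta}{2}\bigl(1 - \int x^2\, d\nu_\beta(x)\bigr)$, where $\nu_\beta$ is the Parisi measure on $[0,1]$, and $\nu_\beta \neq \delta_0$ for $\beta>1$ --- this is the defining feature of the RSB phase. Hence $\Phi'(\beta) < \beta/2$ strictly. Since each $\Phi_{n,\SK}$ is convex in $\beta$ (noted in the excerpt), the classical fact that pointwise-convergent sequences of convex functions have their derivatives converging at every point of differentiability of the limit yields $\Phi_{n,\SK}'(\beta) \to \Phi'(\beta)$. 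Plugging back into the overlap identity produces the desired positive lower bound on $\lim_n \E_\bg[\langle R_{1,2}^2\rangle_{\beta,\bg}]$.

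The main obstacle is the strict inequality $\Phi'(\beta) < \beta/2$ for every $\beta > 1$: this is a pointwise statement about the derivative of the limiting free energy in the RSB regime, and is substantially harder than the integrated inequality $\Phi(\beta) < \log 2 + \beta^2/4$, which already follows from, e.g., an annealed second moment computation. The full Parisi formula (or, equivalently, Ghirlanda--Guerra type identities together with the structure of the Parisi measure) is precisely the tool needed to extract it. If one wished to avoid the Parisi machinery, an alternative is the coupling / stochastic localization argument of \cite{alaoui_sampling_2022}, which analyzes the SK Gibbs measure at a single temperature by interpolating on a coupling strength between two replicas and deduces positive overlap from a free-energy gap between the coupled and uncoupled systems; this has the advantage of being self-contained but still requires delicate control of concentration for the overlap.
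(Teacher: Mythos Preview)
The paper does not prove this theorem: it is stated with a citation to \cite{alaoui_sampling_2022} and used as a black box. So there is no ``paper's own proof'' to compare against.

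Your sketch is the standard argument and is essentially correct. The identity you derive in the first step is exactly Lemma~\ref{lemma:SK_overlap_energy} in the paper (with the correct sign for the paper's $e^{-\beta H}$ convention; note the paper's stated version of Lemma~\ref{lemma:SK_overlap_energy} appears to carry the opposite sign), and the convexity/Griffiths step is Proposition~\ref{prop:griffiths_lemma}. The substantive input you correctly flag is the strict inequality $\Phi'(\beta)<\beta/2$ for $\beta>1$, equivalently $\int x^2\,d\nu_\beta>0$ for the Parisi measure $\nu_\beta$. This is indeed known: uniqueness of the Parisi minimizer (Auffinger--Chen) plus Toninelli's result that the replica-symmetric value $\log 2+\beta^2/4$ is strictly suboptimal for $\beta>1$ force $\nu_\beta\neq\delta_0$, and the envelope formula $\Phi'(\beta)=\tfrac{\beta}{2}(1-\int x^2\,d\nu_\beta)$ together with differentiability of $\Phi$ (Talagrand, Panchenko) completes the chain. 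You should cite these ingredients explicitly rather than leave them implicit. Your final paragraph's description of an alternative route via \cite{alaoui_sampling_2022} is a bit muddled---that paper also ultimately relies on the Parisi picture for this particular fact rather than deriving positive overlap from a coupled free-energy gap---but since it is offered only as an aside this does not affect the proof.
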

However, when $t > 0$, then the overlap will be zero in the $n \to \infty$ limit.

\begin{theorem}[\cite{chatterjee_disorder_2009}]\label{thm:sk_chaos_chatterjee}
    For all $\beta \in \mathbb{R}, t \in (0, 1]$, we have   \begin{equation}\label{eqn:sk_disorder_chaos}
        \lim_{n \to \infty} \E_{\bg,\bg'}\left[\langle R_{1,2}(\sigma_1, \sigma_2)^2 \rangle_{\beta, \bg,\bg_t}\right] = 0.
    \end{equation}
\end{theorem}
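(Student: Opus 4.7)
The plan is to prove the disorder chaos bound via Gaussian interpolation combined with the super-concentration of the SK free energy. The key object is the coupled second moment
\begin{equation}
\psi(u) = \E_{\bg, \bg'}\bigl[\log Z(\beta, \bg) \cdot \log Z(\beta, \bg_u)\bigr], \qquad u \in [0, 1].
\end{equation}
At $u = 0$ one has $\psi(0) = \E[(\log Z(\beta, \bg))^2]$, while at $u = 1$ the two disorders are independent so $\psi(1) = (\E[\log Z(\beta, \bg)])^2$. Therefore $\psi(0) - \psi(1) = \Var_{\bg}(\log Z(\beta, \bg))$.

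The first substantive step is to compute $\psi'(u)$ by Gaussian integration by parts. Using $\partial_{ij}\log Z(\beta, \bX) = -\beta \langle \sigma_i\sigma_j\rangle_{\beta, \bX}$, differentiating $\bg_u = \sqrt{1-u}\bg + \sqrt{u}\bg'$ in $u$, and then applying Stein's identity separately to $\bg_{ij}$ and $\bg'_{ij}$, the terms involving second derivatives of $\log Z$ cancel and one is left with
\begin{equation}
\psi'(u) = -\frac{\beta^2 n}{4\sqrt{1-u}} \cdot \E_{\bg, \bg'}\bigl[\langle R_{1,2}(\sigma_1, \sigma_2)^2 \rangle_{\beta, \bg, \bg_u}\bigr] \;\leq\; 0.
\end{equation}
In particular $\psi$ is non-increasing on $[0,1]$, so $\psi(t) \geq \psi(1)$ for all $t \in [0,1]$, and integrating yields the crucial identity
\begin{equation}
\frac{\beta^2 n}{4}\int_0^t \frac{\E\bigl[\langle R_{1,2}^2\rangle_{\beta, \bg, \bg_u}\bigr]}{\sqrt{1-u}}\, du \;=\; \psi(0) - \psi(t) \;\leq\; \Var_{\bg}(\log Z(\beta, \bg)).
\end{equation}

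The second key input is the \emph{super-concentration} of the SK free energy: $\Var_{\bg}(\log Z(\beta, \bg)) = o(n)$ as $n \to \infty$ for every fixed $\beta$. This is the main non-trivial ingredient and the principal obstacle in the proof; it is proved via a second-order Poincar\'e / hypercontractive argument due to Chatterjee, and crucially it holds at all temperatures, including below the phase transition $\beta > 1$ where the pointwise overlap itself is of order one. Dividing the displayed inequality by $n$ and using $1/\sqrt{1-u} \geq 1$ yields $\int_0^t \E[\langle R_{1,2}^2\rangle_{\beta, \bg, \bg_u}]\, du \to 0$ as $n \to \infty$ for every fixed $t \in (0, 1]$.

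Finally, to upgrade this integral bound to the pointwise limit asserted by the theorem, I would appeal to the monotonicity of $u \mapsto \E[\langle R_{1,2}^2\rangle_{\beta, \bg, \bg_u}]$, which is non-increasing in $u$ (intuitively, the two replicas decorrelate as the perturbation grows, and this can be made rigorous by a further Gaussian interpolation computation on the four-replica expression $\sum_{ij}\langle\sigma_i\sigma_j\rangle_{\beta,\bg}\langle\sigma_i\sigma_j\rangle_{\beta,\bg_u}$). Given monotonicity, for any fixed $t \in (0, 1]$ one has
\begin{equation}
t \cdot \E\bigl[\langle R_{1,2}^2\rangle_{\beta, \bg, \bg_t}\bigr] \;\leq\; \int_0^t \E\bigl[\langle R_{1,2}^2\rangle_{\beta, \bg, \bg_u}\bigr]\, du \;\to\; 0,
\end{equation}
which delivers the theorem.
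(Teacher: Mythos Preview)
The paper does not itself prove this theorem; it is simply quoted with a citation to Chatterjee. That said, the paper does re-derive the identical conclusion later in Section~\ref{sec:disorder_chaos} (the unnamed Corollary immediately following Theorem~\ref{thm:sk_pos_temp_chaos}), and that derivation proceeds by a completely different route: it takes as a black box Chen's exponential disorder-chaos bound
\[
\E\!\left[\frac{Z^{I_\epsilon}_{\bg,\bg_t}}{Z^{[-1,1]}_{\bg,\bg_t}}\right]\le K\exp(-n/K)
\]
and splits $\langle R_{1,2}^2\rangle$ according to whether $|R_{1,2}|\le\epsilon$ or not.

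Your proposal is, in outline, Chatterjee's original argument and is correct. It is genuinely different from what the paper does. The trade-off is the following: your route is more self-contained, resting only on the super-concentration estimate $\Var_{\bg}(\log Z(\beta,\bg))=o(n)$ and elementary Gaussian calculus, whereas the paper's route imports a much deeper variational result. On the other hand, the paper's route delivers the \emph{exponential} rate, which is what the paper actually needs downstream (Proposition~\ref{prop:coupled_SK_free_energy_gap} converts that exponential bound into a constant free-energy gap that can then be interpolated to the sparse model). Your argument, giving only $o(1)$, would not by itself supply that gap.

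One small comment on your monotonicity step. The claim that $u\mapsto \E[\langle R_{1,2}^2\rangle_{\beta,\bg,\bg_u}]$ is non-increasing is correct, but the phrase ``further Gaussian interpolation'' hides one ingredient: differentiating $\sum_{ij}\E[\langle\sigma_i\sigma_j\rangle_{\bg}\langle\sigma_i\sigma_j\rangle_{\bg_u}]$ in $u$ produces a sum of terms of the form $\E[H(\bg)H(\bg_u)]$ whose sign is not immediate from the interpolation alone. The missing observation is that $\E[H(\bg)H(\bg_u)]\ge 0$ for any square-integrable $H$, which follows by writing $\bg$ and $\bg_u$ as a common Gaussian plus independent noise and conditioning on the common part (equivalently, by the Hermite/Ornstein--Uhlenbeck expansion). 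With that in hand your final inequality $t\cdot\E[\langle R_{1,2}^2\rangle_{\beta,\bg,\bg_t}]\le\int_0^t\E[\langle R_{1,2}^2\rangle_{\beta,\bg,\bg_u}]\,du\to 0$ goes through.
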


If we think of the expression on the left hand side of \eqref{eqn:overlap_sk_nonzero} and \eqref{eqn:sk_disorder_chaos} as a function of $t$, then \eqref{eqn:overlap_sk_nonzero} and \eqref{eqn:sk_disorder_chaos} together suggest that this function is not right-continuous at $t = 0$ if $|\beta| > 1$. It was shown in~\cite{alaoui_sampling_2022} that this property very naturally obstructs a class of sampling algorithms for $\mu_{\beta, \bg}$.

\begin{definition}[Definition 2.2 in~\cite{alaoui_sampling_2022}]\label{def:stability_definition}
    Let $\{\alg_n\}_{n \geq 1}$ be a family of sampling algorithms for the Gibbs measure $\mu_{\beta, \bg}$, where $\alg_n$ takes an $n\times n$ matrix $\bg$ and $\beta \in \mathbb{R}$ as input and outputs an assignment in $\{-1, 1\}^n$. Let $\mu_{\beta, \bg}^{\alg}$ be the output law of $\alg_n$. We say that $\{\alg_n\}_{n \geq 1}$ is stable (with respect to disorder at $\beta$), if 
    \begin{equation}\label{eqn:stability_definition}
    \lim_{t \to 0} \limsup_{n \to \infty} \E\left[W_{2, n}(\mu_{\beta, \bg}^\alg, \mu_{\beta, \bg_t}^{\alg})\right] = 0.
    \end{equation}
\end{definition}

Here $\Gamma(\mu_1, \mu_2)$ is the set of distributions over pairs of configurations (i.e. $\{-1, 1\}^n \times \{-1, 1\}^n$) such that its marginal on the first configuration is $\mu_1$ and its marginal on the second configuration is $\mu_2$. Intuitively, \eqref{eqn:stability_definition} means that stable algorithms are not able to make the leap at the discontinuity $t = 0$ implied by \eqref{eqn:overlap_sk_nonzero} and \eqref{eqn:sk_disorder_chaos}, and therefore must be producing a distribution that is bounded away from the Gibbs distribution $\mu_{\beta, \bg}$ in terms of the $W_{2,n}$ distance. This intuition is formalized in~\cite{alaoui_sampling_2022} as the following theorem.
\begin{theorem}[Theorem 2.6 in~\cite{alaoui_sampling_2022}]
    Fix $\beta$ such that $|\beta| > 1$. Let $\{\alg_n\}_{n \geq 1}$ be a family of sampling algorithms for the Gibbs measure $\mu_{\beta, \bg}$ that is stable with respect to disorder at $\beta$, then
    \begin{equation}
        \liminf_{n \to \infty} \E_{\bg} \left[W_{2,n}(\mu_{\beta, \bg}, \mu_{\beta, \bg}^\alg)\right] > 0.
    \end{equation}
\end{theorem}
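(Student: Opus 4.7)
The plan is to argue by contradiction. Suppose that along some subsequence $\E_\bg[W_{2,n}(\mu_{\beta,\bg}, \mu^\alg_{\beta,\bg})]$ tends to $0$; I will derive a contradiction from disorder chaos via the overlap-to-$W_{2,n}$ comparison of Lemma~\ref{lem:overlap_to_w2}. The argument has three steps: (i) use stability and the triangle inequality to transfer closeness of $\mu^\alg$ to $\mu$ into closeness between $\mu_{\beta,\bg}$ and $\mu_{\beta,\bg_t}$ themselves; (ii) convert this $W_{2,n}$-closeness into closeness of the expected squared overlaps $\E[\langle R_{1,2}^2\rangle_{\beta,\bg}]$ and $\E[\langle R_{1,2}^2\rangle_{\beta,\bg,\bg_t}]$; (iii) invoke the quantitative gap between these two quantities given by Theorems~\ref{thm:sk_no_perturbation} and~\ref{thm:sk_chaos_chatterjee}.

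For (i), fix $\epsilon>0$. Since $\bg_t \stackrel{d}{=}\bg$, the subsequential hypothesis also yields $\E[W_{2,n}(\mu^\alg_{\beta,\bg_t},\mu_{\beta,\bg_t})]\to 0$ along the same subsequence. Using stability at $\beta$, choose $t_0>0$ small enough that $\limsup_n \E[W_{2,n}(\mu^\alg_{\beta,\bg},\mu^\alg_{\beta,\bg_{t_0}})]<\epsilon/3$. Then, for all sufficiently large $n$ along the subsequence, the triangle inequality for $W_{2,n}$ gives
\[
\E_{\bg,\bg'}[W_{2,n}(\mu_{\beta,\bg},\mu_{\beta,\bg_{t_0}})] < \epsilon .
\]

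For (ii), let $(\tau_1,\tau_2)$ be an optimal $W_{2,n}$ coupling of $\mu_{\beta,\bg}$ and $\mu_{\beta,\bg_{t_0}}$, and draw $\sigma_1\sim\mu_{\beta,\bg}$ independently of this coupling. By Cauchy-Schwarz,
\[
|R_{1,2}(\sigma_1,\tau_1)-R_{1,2}(\sigma_1,\tau_2)| = \Bigl|\tfrac{1}{n}\sum_i\sigma_1(i)(\tau_1(i)-\tau_2(i))\Bigr|\leq \sqrt{\tfrac{1}{n}\sum_i(\tau_1(i)-\tau_2(i))^2}.
\]
Using $|R_{1,2}|\leq 1$ to bound the difference of squares by twice the difference, taking expectations, and applying Jensen's inequality, I obtain
\[
\bigl|\E[\langle R_{1,2}^2\rangle_{\beta,\bg}] - \E[\langle R_{1,2}^2\rangle_{\beta,\bg,\bg_{t_0}}]\bigr| \leq 2\,\E_{\bg,\bg'}[W_{2,n}(\mu_{\beta,\bg},\mu_{\beta,\bg_{t_0}})] < 2\epsilon .
\]

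For (iii), Theorem~\ref{thm:sk_no_perturbation} yields $\E[\langle R_{1,2}^2\rangle_{\beta,\bg}]\geq \epsilon(\beta)/2$ for all large $n$, while Theorem~\ref{thm:sk_chaos_chatterjee}, applied with the \emph{already fixed} $t_0>0$, gives $\E[\langle R_{1,2}^2\rangle_{\beta,\bg,\bg_{t_0}}]\leq \epsilon(\beta)/4$ for all large $n$. Hence the left-hand side of the bound in (ii) is eventually at least $\epsilon(\beta)/4$, which contradicts the upper bound $2\epsilon$ as soon as $\epsilon<\epsilon(\beta)/8$. The only delicate point is the order of limits: $t_0$ has to be selected before $n\to\infty$, so that Theorem~\ref{thm:sk_chaos_chatterjee} can be applied with $t_0$ fixed; this is exactly what the quantifier structure of the stability definition (a $\limsup_n$ inside a $\lim_{t\to 0}$) permits. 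Otherwise the argument is straightforward and follows~\cite{alaoui_sampling_2022}.
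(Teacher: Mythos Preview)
Your proof is correct and follows essentially the same approach as the paper's proof of the analogous Theorem~\ref{thm:main} (the paper itself only cites this SK result from~\cite{alaoui_sampling_2022}). The only cosmetic differences are that you organize the argument by contradiction rather than deriving a direct lower bound, and you compare squared overlaps via an inline Cauchy--Schwarz estimate rather than invoking Lemma~\ref{lem:overlap_to_w2} on $|R_{1,2}|$; the core ingredients---triangle inequality for $W_{2,n}$, equality in law of $\bg$ and $\bg_t$, stability to control the algorithm term, and the overlap gap from Theorems~\ref{thm:sk_no_perturbation} and~\ref{thm:sk_chaos_chatterjee}---are identical.
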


In this paper, we obstruct stable sampling algorithms for sparse models as well. For the sparse models, we consider a slightly different notion of perturbation. Fix the average degree $d > 0$ and the perturbation parameter $t \in [0, 1]$, we will take three independent random matrices $\bA^{(1-t)}, \bA^{(t, 1)}, \bA^{(t, 2)}$ such that $\bA^{(1-t)}_{ij} \sim \Po((1-t)d/(2n))$, $\bA^{(t, 1)}_{ij}, \bA^{(t, 2)}_{ij} \sim \Po(td/(2n))$ independently for all $i,j \in [n]$. We then define $\bA = \bA^{(1-t)} + \bA^{(t, 1)}$ and $\bA_t = \bA^{(1-t)} + \bA^{(t, 2)}$. Similarly to Definition~\ref{def:stability_definition}, we say that a family of sampling algorithms $\{\alg_n\}_{n \geq 1}$ with inputs $\beta$ and $\bA$ is stable at inverse temperature $\beta$ if

    \begin{equation}\label{eqn:stability_definition_sparse}
    \lim_{t \to 0} \limsup_{n \to \infty} \E\left[W_{2, n}(\mu_{\beta, \bA}^\alg, \mu_{\beta, \bA_t}^\alg)\right] = 0.
    \end{equation}

When $t=0$, we have by Proposition~\ref{lem:overlap_interpolation} and Theorem~\ref{thm:sk_no_perturbation} that for $\beta > 1$,
\begin{equation}\label{eq:R12_A}
    \lim_{d \to \infty}\liminf_{n \to \infty}\E_\bg[\langle R_{1,2}^2\rangle_{\beta/\sqrt{d}, \bA}] > 0,
\end{equation}
and for $\beta$ with $|\beta| > 1$
\begin{equation}   
    \lim_{d \to \infty}\liminf_{n \to \infty}\E_\bg[\langle R_{1,2}^2\rangle_{\beta/\sqrt{d}, \bA}^\bis] > 0.
\end{equation}
On the other hand, if $t > 0$, we show in the following lemma that the overlap tends to zero as $n$ goes to infinity.

\begin{lemma}\label{lem:disorder_chaos_sparse}
    Fix $t > 0$.
    \begin{enumerate}
        \item[(a)] For any $\beta \geq 0$,
    \begin{equation}\label{eq:R12_A_At}
    \lim_{d \to \infty}\limsup_{n \to \infty}\E[\langle R_{1,2}^2\rangle_{\beta/\sqrt{d},\bA,\bA_t}] = 0.
    \end{equation}
        \item[(b)] For any $\beta \in \mathbb{R}$,
    \begin{equation}
    \lim_{d \to \infty}\limsup_{n \to \infty}\E[\langle R_{1,2}^2\rangle_{\beta/\sqrt{d},\bA,\bA_t}^\bis] = 0.
    \end{equation}
    \end{enumerate}
\end{lemma}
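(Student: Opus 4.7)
We follow the strategy outlined in Section~\ref{secTechniques}: convert SK disorder chaos into a free-energy gap in a constrained coupled system, transport the gap to the sparse model via a coupled analog of Lemma~\ref{lem:free_energy_interpolation}, and translate the gap back to an overlap bound. Fix $\delta > 0$ and define the constrained coupled sparse partition function
\[
Z^{\mathrm{cpl}}_{d,\geq\delta}(\beta;\bA,\bA_t) := \sum_{\substack{\sigma_1,\sigma_2\in\{-1,1\}^n \\ |R_{1,2}(\sigma_1,\sigma_2)|\geq \delta}} \exp\bigl(-\beta H(\sigma_1;\bA)-\beta H(\sigma_2;\bA_t)\bigr),
\]
its unconstrained counterpart $Z^{\mathrm{cpl}}_{d}(\beta;\bA,\bA_t):=Z(\beta,\bA)\,Z(\beta,\bA_t)$, and SK analogs $Z^{\mathrm{cpl}}_{\SK,\geq\delta}$, $Z^{\mathrm{cpl}}_{\SK}$ defined from $(\bg,\bg_t)$; the bisection versions are obtained by restricting the sums to $A_n\times A_n$. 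Since $\langle R_{1,2}^2\rangle_{\beta/\sqrt{d},\bA,\bA_t}\leq \delta^2 + \langle \bone_{|R_{1,2}|\geq\delta}\rangle_{\beta/\sqrt{d},\bA,\bA_t}$, it suffices to show $\E\bigl[\langle \bone_{|R_{1,2}|\geq \delta}\rangle_{\beta/\sqrt{d},\bA,\bA_t}\bigr]\to 0$ in the iterated $\lim_{d\to\infty}\limsup_{n\to\infty}$ limit for each fixed $\delta>0$, after which $\delta\to 0$ closes the argument.

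\textbf{Gap in SK, then transferred to sparse.} First, invoke the exponential version of SK disorder chaos (Theorem~\ref{thm:sk_pos_temp_chaos}) to obtain $\E\bigl[\langle\bone_{|R_{1,2}|\geq \delta}\rangle_{\beta,\bg,\bg_t}\bigr]\leq e^{-c_1(\beta,t,\delta)\,n}$. Combined with Gaussian Poincar\'e concentration of $\log Z$, this upgrades to the SK free-energy gap
\[
\frac{1}{n}\E\bigl[\log Z^{\mathrm{cpl}}_{\SK,\geq\delta}(\beta;\bg,\bg_t)\bigr] \;\leq\; \frac{1}{n}\E\bigl[\log Z^{\mathrm{cpl}}_{\SK}(\beta;\bg,\bg_t)\bigr] \;-\; c_2(\beta,t,\delta)
\]
for some $c_2>0$ and all large $n$. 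Next, extend Lemma~\ref{lem:free_energy_interpolation} to the two-replica coupled partition functions (both constrained and unconstrained) by interpolating $(\bg,\bg_t)\leftrightarrow(\bA,\bA_t)$ simultaneously while preserving the correlation structure: on the SK side $\bg_t=\sqrt{1-t}\,\bg+\sqrt{t}\,\bg'$, and on the sparse side $\bA,\bA_t$ share the common Poisson block $\bA^{(1-t)}$. This is the style of coupled interpolation carried out in~\cite{chen_disorder_2018,chen_suboptimality_2019}. The overlap indicator $\bone_{|R_{1,2}|\geq\delta}$ depends only on the spins, hence is inert under both Gaussian integration by parts and the Stein--Chen identity (Lemma~\ref{lemma:stein-chen}); the Dembo--Montanari--Sen calculation therefore goes through unchanged, yielding the same $O(|\beta|^3/\sqrt d+\beta^4/d)$ bound on the $(\mathrm{SK}\leftrightarrow d)$ difference for both $Z^{\mathrm{cpl}}_{\cdot,\geq\delta}$ and $Z^{\mathrm{cpl}}_{\cdot}$. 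Choosing $d$ so this error is at most $c_2/2$ gives the sparse gap
\[
\frac{1}{n}\E\bigl[\log Z^{\mathrm{cpl}}_{d,\geq\delta}(\beta/\sqrt{d};\bA,\bA_t)\bigr]\;\leq\;\frac{1}{n}\E\bigl[\log Z^{\mathrm{cpl}}_{d}(\beta/\sqrt{d};\bA,\bA_t)\bigr]-c_2/2,
\]
equivalently $\E\bigl[\log\langle \bone_{|R_{1,2}|\geq\delta}\rangle_{\beta/\sqrt{d},\bA,\bA_t}\bigr]\leq -c_2 n/2$.

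\textbf{Conclusion and main obstacle.} Bounded-differences concentration of $\log Z$ under resampling of a single Poisson edge variable promotes the expectation bound to $\langle\bone_{|R_{1,2}|\geq\delta}\rangle\leq e^{-c_2 n/4}$ with probability $1-o(1)$; since $\langle\bone\rangle\leq 1$ always, this gives $\E\bigl[\langle R_{1,2}^2\rangle_{\beta/\sqrt d,\bA,\bA_t}\bigr]\leq \delta^2+o_n(1)$. Taking $\limsup_{n\to\infty}$, then $d\to\infty$ (via a diagonal choice of $d=d(\delta)$), and finally $\delta\to 0$ proves part~(a). Part~(b) is identical on $A_n\times A_n$, using the bisection version of Theorem~\ref{thm:sk_pos_temp_chaos} and the bisection version of the interpolation, which introduces only the extra $O(|\beta|\sqrt d/n^2)$ footnote term of Lemma~\ref{lem:free_energy_interpolation} (absorbed into $o_n(1)$). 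The main technical obstacle is the coupled interpolation itself: one must carry out a joint interpolation in which the two replicas' disorders morph in lockstep with a consistent shared part tied to $t$, and verify that neither the overlap constraint nor (in part~(b)) the bisection constraint creates boundary terms in the Stein--Chen and Gaussian integration-by-parts identities underlying Lemma~\ref{lem:free_energy_interpolation}.
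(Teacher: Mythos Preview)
Your overall architecture --- SK disorder chaos $\Rightarrow$ SK free-energy gap $\Rightarrow$ transfer to sparse via coupled interpolation $\Rightarrow$ concentration $\Rightarrow$ overlap bound --- is exactly the paper's (Proposition~\ref{prop:coupled_SK_free_energy_gap}, Theorem~\ref{thm:strongerfreeenergycorrespondence}, Lemma~\ref{lemma:logZ_sparse_concentration_coupled}). For part~(b) your route matches the paper essentially line-by-line.

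There is, however, a genuine gap in your treatment of part~(a). You assert that the Dembo--Montanari--Sen calculation ``goes through unchanged, yielding the same $O(|\beta|^3/\sqrt d+\beta^4/d)$ bound'' for the \emph{non-bisection} coupled system. This overlooks the centering issue: as the footnote to Lemma~\ref{lem:free_energy_interpolation} explains, the interpolation error carries an additional term $c_3\sqrt{d}\,\langle m(\sigma)^2\rangle_t$, arising because the Poisson disorder is uncentered and its mean contributes $(d/2n)\sum_{i,j}\sigma_i\sigma_j=(d/2)n\,m(\sigma)^2$ to the Hamiltonian. For bisections this term is $O(1/n^2)$ and harmless; without the bisection restriction it is bounded only by $c_3\sqrt{d}$, which diverges as $d\to\infty$. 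So a direct SK $\leftrightarrow$ sparse interpolation in the non-bisection setting requires control of the magnetization under the \emph{interpolating} Gibbs measure, which your proposal does not address (Proposition~\ref{prop:magnetization_zero} treats only the endpoint sparse measure). The coupled interpolation of \cite{chen_disorder_2018,chen_suboptimality_2019} that you cite, and which appears as Theorem~\ref{thm:strongerfreeenergycorrespondence}, is accordingly stated only for the bisection model.

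The paper's remedy is to route part~(a) through bisections: it uses Theorem~\ref{thm:bisection_free_energy} to pass the SK free-energy gap to the SK \emph{bisection} coupled system, applies Theorem~\ref{thm:strongerfreeenergycorrespondence} to reach the sparse bisection model, and then invokes Theorem~\ref{theorem:phi_sparse_limit_bis_coupled} to remove the bisection constraint on the sparse side. This detour is precisely what absorbs the magnetization obstruction. One minor side remark: your upgrade from Theorem~\ref{thm:sk_pos_temp_chaos} to a free-energy gap via Gaussian concentration works, but the paper does it in a single line by Jensen's inequality (Proposition~\ref{prop:coupled_SK_free_energy_gap}).
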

The following lemma establishes the connection between the overlap and the $W_{2,n}$ distance.
\begin{lemma}[Lemma 5.3 in~\cite{alaoui_sampling_2022}]\label{lem:overlap_to_w2}
    Fix $n \in \mathbb{N}$. Let $\mu_1, \mu_2, \nu_1, \nu_2$ be distributions over $\{-1, +1\}^n$. We have
    \begin{equation}
    \left|\E_{(\sigma, \sigma')\sim \mu_1 \otimes \nu_1}\left[|R_{1,2}(\sigma, \sigma')|\right] - \E_{(\sigma, \sigma')\sim \mu_2 \otimes \nu_2}\left[|R_{1,2}(\sigma, \sigma')|\right]\right| \leq W_{2,n}(\mu_1, \mu_2) + W_{2, n}(\nu_1, \nu_2).
    \end{equation}
\end{lemma}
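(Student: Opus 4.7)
The plan is to show that the functional $\mu \mapsto \E_{\sigma\sim\mu,\,\sigma'\sim\nu}[|R_{1,2}(\sigma,\sigma')|]$ is $1$-Lipschitz in $W_{2,n}$ for any fixed $\nu$, and then to conclude by the triangle inequality. Concretely, I would first insert the hybrid expectation $\E_{\mu_2\otimes\nu_1}[|R_{1,2}|]$ to bound the left-hand side by
\[
\bigl|\E_{\mu_1\otimes\nu_1}[|R_{1,2}|] - \E_{\mu_2\otimes\nu_1}[|R_{1,2}|]\bigr| + \bigl|\E_{\mu_2\otimes\nu_1}[|R_{1,2}|] - \E_{\mu_2\otimes\nu_2}[|R_{1,2}|]\bigr|,
\]
and then show by symmetric arguments that the two pieces are at most $W_{2,n}(\mu_1,\mu_2)$ and $W_{2,n}(\nu_1,\nu_2)$ respectively.

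For the first piece, I would let $\pi \in \Gamma(\mu_1,\mu_2)$ be any coupling and realize the difference as an expectation under $\pi \otimes \nu_1$. Combining $\bigl||a|-|b|\bigr| \leq |a-b|$ with the identity $R_{1,2}(\sigma_1,\sigma') - R_{1,2}(\sigma_2,\sigma') = \frac{1}{n}\sum_i (\sigma_1(i)-\sigma_2(i))\sigma'(i)$ and the fact that $|\sigma'(i)|=1$, the integrand is pointwise bounded by $\frac{1}{n}\sum_i|\sigma_1(i)-\sigma_2(i)|$. Cauchy-Schwarz then gives $\frac{1}{n}\sum_i|\sigma_1(i)-\sigma_2(i)| \leq \sqrt{\frac{1}{n}\sum_i(\sigma_1(i)-\sigma_2(i))^2}$, and Jensen's inequality pulls the square root outside the $\pi$-expectation, producing the bound $\sqrt{\E_\pi[\frac{1}{n}\sum_i(\sigma_1(i)-\sigma_2(i))^2]}$. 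Taking the infimum over $\pi \in \Gamma(\mu_1,\mu_2)$ recovers exactly $W_{2,n}(\mu_1,\mu_2)$.

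The second piece is handled identically with the roles of $\mu$ and $\nu$ swapped, and the lemma follows. The proof is essentially routine---it is the standard Lipschitz bound on a bilinear Wasserstein functional with $\ell^\infty$-bounded coefficients---so there is no real obstacle. The only bit of care needed is the ordering: one applies Cauchy-Schwarz to the inner (deterministic) normalized $\ell^1$-to-$\ell^2$ comparison \emph{before} applying Jensen to pull the square root outside the expectation over the coupling.
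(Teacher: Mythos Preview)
Your proof is correct. The paper does not actually prove this lemma---it is quoted verbatim as Lemma~5.3 of \cite{alaoui_sampling_2022}---so there is no in-paper argument to compare against; your hybrid/triangle-inequality decomposition together with the pointwise bound $||R_{1,2}(\sigma_1,\sigma')|-|R_{1,2}(\sigma_2,\sigma')||\le \frac{1}{n}\sum_i|\sigma_1(i)-\sigma_2(i)|$, Cauchy--Schwarz, and Jensen is the standard (and essentially the only) route.
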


We now have the ingredients needed for proving Theorem~\ref{thm:main}. Theorem~\ref{thm:main_bis} can be proved in the same way and we omit the details here.

\begin{proof}[Proof of Theorem~\ref{thm:main}]
We drop the inverse temperature parameter in the $\langle \cdot \rangle$ notation for brevity. For any arbitrary $t > 0$, by choosing $\mu_1 = \mu_2 = \nu_1 = \mu_{\beta/\sqrt{d},\bA}$ and $\nu_2 = \mu_{\beta/\sqrt{d},\bA_t}$ in Lemma~\ref{lem:overlap_to_w2}, we get that
\begin{align}
    W_{2,n}(\mu_{\beta/\sqrt{d},\bA}, \mu_{\beta/\sqrt{d},\bA_t}) \geq \left|
    \left\langle|R_{1,2}(\sigma, \sigma')|\right\rangle_{\bA} - \left\langle|R_{1,2}(\sigma, \sigma')|\right\rangle_{\bA, \bA_t}\right|.
\end{align}

Taking expectation on both sides, we obtain
\begin{align*}
    \E_{\bA,\bA_t}\left[W_{2,n}(\mu_{\beta/\sqrt{d},\bA}, \mu_{\beta/\sqrt{d},\bA_t})\right] & \geq \E_{\bA,\bA_t}\left[\left|
    \left\langle|R_{1,2}(\sigma, \sigma')|\right\rangle_{\bA} - \left\langle|R_{1,2}(\sigma, \sigma')|\right\rangle_{\bA, \bA_t}\right|\right] \\ 
    & \geq \E_{\bA,\bA_t}\left[
    \left\langle|R_{1,2}(\sigma, \sigma')|\right\rangle_{\bA}\right] - \E_{\bA,\bA_t}\left[\left\langle|R_{1,2}(\sigma, \sigma')|\right\rangle_{\bA, \bA_t}\right].
\end{align*}
In the second inequality we used the fact that $|a - b| \geq |a| - |b|$ for any $a, b \in \mathbb{R}$. Taking $\liminf$ on both sides, we obtain
\begin{equation}
    \liminf_{n \to \infty}\E_{\bA,\bA_t}\left[W_{2,n}(\mu_{\beta/\sqrt{d},\bA}, \mu_{\beta/\sqrt{d},\bA_t})\right] \geq \liminf_{n \to \infty}\E_{\bA,\bA_t}\left[
    \left\langle|R_{1,2}(\sigma, \sigma')|\right\rangle_{\bA}\right] - \limsup_{n \to \infty}\E_{\bA,\bA_t}\left[\left\langle|R_{1,2}(\sigma, \sigma')|\right\rangle_{\bA, \bA_t}\right].
\end{equation}
By \eqref{eq:R12_A} and \eqref{eq:R12_A_At}, if $d$ is sufficiently large, for some $\epsilon = \epsilon(\beta) > 0$ independent of $t$,

\begin{equation}
    \liminf_{n \to \infty}\E_{\bA,\bA_t}\left[W_{2,n}(\mu_{\beta/\sqrt{d},\bA}, \mu_{\beta/\sqrt{d},\bA_t})\right] \geq \epsilon.
\end{equation}

By the triangle inequality, we have
\begin{equation}
W_{2,n}(\mu_{\beta/\sqrt{d},\bA}, \mu_{\beta/\sqrt{d},\bA_t}) \leq W_{2,n}(\mu_{\beta/\sqrt{d},\bA}, \mu_{\beta/\sqrt{d},\bA}^\alg) + W_{2,n}(\mu_{\beta/\sqrt{d},\bA}^\alg, \mu_{\beta/\sqrt{d},\bA_t}^\alg) + W_{2,n}(\mu_{\beta/\sqrt{d},\bA_t}, \mu_{\beta/\sqrt{d},\bA_t}^\alg).
\end{equation}

Since $\bA$ and $\bA_t$ have the same distribution, we have
\begin{equation}
    \E\left[W_{2,n}(\mu_{\beta/\sqrt{d},\bA}, \mu_{\beta/\sqrt{d},\bA}^\alg)\right] = \E\left[W_{2,n}(\mu_{\beta/\sqrt{d},\bA_t}, \mu_{\beta/\sqrt{d},\bA_t}^\alg)\right].
\end{equation}
Since $\{\alg_n\}_n$ is stable at any inverse temperature, by \eqref{eqn:stability_definition_sparse} and taking $t$ sufficiently small we have
\begin{equation}
    \limsup_{n\to\infty}\E\left[W_{2,n}(\mu_{\beta/\sqrt{d},\bA}^{\alg},\mu_{\beta/\sqrt{d},\bA_t}^{\alg})\right] \leq \frac{\epsilon}{2}.
\end{equation}
It follows that
\begin{equation}
\liminf\limits_{n \to \infty}\E\left[W_{2,n}(\mu_{\beta/\sqrt{d},\bA}^{\alg}, \mu_{\beta/\sqrt{d},\bA})\right] \geq \frac{\epsilon}{4}. 
\end{equation}
\qedhere
\end{proof}

\section{Interpolating the average energy between SK and the diluted model}\label{sec:average_energy}

In this section we prove Proposition~\ref{lem:average_energy_SK_sparse} and establish the interpolation of average energy between sparse and dense models. By Lemma~\ref{lem:free_energy_interpolation}, we already know that the free energy of the sparse model converges to that of the dense model as the average degree $d$ goes to infinity. The following connection between free energy and average energy is well-known:
    \begin{equation}
        \frac{\partial}{\partial \beta} \frac{1}{n} \E\left[ \log Z(\beta, \bX) \right]=\frac{1}{n} \E\left[ \frac{\sum_{\sigma \in \{-1, 1\}^n}-H(\sigma;\bX)\exp(-\beta H(\sigma;\bX))}{Z(\beta, \bX)}\right] =  -\frac{1}{n} \E\left[ \langle H(\sigma;\bX)\rangle_{\beta, \bX}\right].\label{eqn:free_energy_to_average_energy}
    \end{equation}
Similarly,
    \begin{equation}
        \frac{\partial}{\partial \beta} \frac{1}{n} \E\left[ \log Z^\bis(\beta, \bX) \right]=\frac{1}{n} \E\left[ \frac{\sum_{\sigma \in A_n}-H(\sigma;\bX)\exp(-\beta H(\sigma;\bX))}{Z^\bis(\beta, \bX)}\right] =  -\frac{1}{n} \E\left[ \langle H(\sigma;\bX)\rangle_{\beta, \bX}^\bis\right].\label{eqn:free_energy_to_average_energy_bis}
    \end{equation}
To obtain convergence of the partial derivatives, we use the following elementary fact sometimes known as Griffith's lemma in statistical physics.
\begin{proposition}[See e.g.~\cite{talagrand_mean_2011}]\label{prop:griffiths_lemma}
    Let $(f_\alpha)_{\alpha \geq 0}$ be a family of convex and differentiable functions that converges pointwise in an open interval $I$ to $f$, then $\lim_{\alpha \to \infty} f'_\alpha(x) = f'(x)$ at every $x \in I$ where $f'(x)$ exists. 
\end{proposition}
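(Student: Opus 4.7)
The plan is to sandwich $f'_\alpha(x)$ between two finite-difference quotients of $f_\alpha$ using convexity, pass to the $\alpha$-limit using pointwise convergence at three points, and then shrink the step size. Because convexity is preserved under pointwise limits, $f$ is convex on $I$ and therefore admits one-sided derivatives $f'_-(y)$ and $f'_+(y)$ at every interior point $y$; at the specific $x$ we consider, the hypothesis of differentiability gives $f'_-(x) = f'_+(x) = f'(x)$.

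Next, I would invoke the standard monotonicity of chord slopes for a convex differentiable function: for any $h > 0$ small enough that $[x - h, x + h] \subset I$,
\[
\frac{f_\alpha(x) - f_\alpha(x - h)}{h} \;\leq\; f'_\alpha(x) \;\leq\; \frac{f_\alpha(x + h) - f_\alpha(x)}{h}.
\]
Pointwise convergence $f_\alpha \to f$ at the three points $x, x \pm h$ then yields, after taking $\liminf$ and $\limsup$ in $\alpha$,
\[
\frac{f(x) - f(x - h)}{h} \;\leq\; \liminf_{\alpha \to \infty} f'_\alpha(x) \;\leq\; \limsup_{\alpha \to \infty} f'_\alpha(x) \;\leq\; \frac{f(x + h) - f(x)}{h}.
\]
Letting $h \to 0^+$, the left-hand quotient tends to $f'_-(x)$ and the right-hand quotient tends to $f'_+(x)$, and both equal $f'(x)$ by assumption, squeezing $\lim_{\alpha \to \infty} f'_\alpha(x) = f'(x)$.

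I do not anticipate a genuine obstacle here; the argument is a two-step sandwich whose only subtleties are (i) checking that convexity survives the pointwise limit, which is a one-line computation from the defining inequality, and (ii) noting that the conclusion really does require $f$ to be differentiable at $x$ rather than just convex there: at a corner of $f$, the argument only traps $f'_\alpha(x)$ asymptotically inside the nontrivial interval $[f'_-(x), f'_+(x)]$, which is exactly why the statement restricts to points of differentiability of $f$.
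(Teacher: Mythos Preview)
Your argument is correct and is the standard proof of this fact. Note that the paper does not actually give its own proof of this proposition: it is stated with a citation to Talagrand's book and used as a black box. Your sandwich argument via chord slopes is precisely the classical proof one finds in that reference, so there is nothing to compare.
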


\begin{proof}[Proof of Proposition~\ref{lem:average_energy_SK_sparse}]
    Proposition~\ref{lem:average_energy_SK_sparse} says that 
    \item[(a)] 
    For every $\beta \geq 0$, $\lim_{d \to \infty}\limsup_{n \to \infty} \frac{1}{n} \left| \E_{\bg}[\langle H_{\SK}(\sigma) \rangle_{\beta, \bg}] - \frac{1}{\sqrt{d}}\E_{\bA}[\langle H_{d}(\sigma) \rangle_{\beta/\sqrt{d}, \bA}] \right| = 0.$
    \item[(b)] 
    For every $\beta \in \mathbb{R}$, 
    $\lim_{d \to \infty}\limsup_{n \to \infty} \frac{1}{n} \left| \E_{\bg}[\langle H_{\SK}(\sigma) \rangle_{\beta, \bg}^\bis] - \frac{1}{\sqrt{d}}\E_{\bA}[\langle H_{d}(\sigma) \rangle_{\beta/\sqrt{d}, \bA}^\bis] \right| = 0.$
    
    We first prove part (b).
    Assume for the sake of contradiction that for some $\beta_0 \in \mathbb{R}$
    \begin{equation}
    \limsup_{d \to \infty}\limsup_{n \to \infty} \frac{1}{n} \left| \E_{\bg}[\langle H_{\SK}(\sigma) \rangle_{\beta_0, \bg}^\bis] - \frac{1}{\sqrt{d}}\E_{\bA}[\langle H_{d}(\sigma) \rangle_{\beta_0/\sqrt{d}, \bA}^\bis] \right| = \epsilon > 0.
    \end{equation}
    It follows that we can choose a sequence of pairs $(d_i, n_i)_{i \in \mathbb{N}}$ such that
    \begin{equation}
    \lim_{i \to \infty} \frac{1}{n_i} \left| \E_{\bg}[\langle H_{\SK}(\sigma) \rangle_{\beta_0, \bg}^\bis] - \frac{1}{\sqrt{d_i}}\E_{\bA}[\langle H_{d_i}(\sigma) \rangle_{\beta_0/\sqrt{d_i}, \bA}^\bis] \right| = \epsilon. \label{eqn:lim_H_epsilon}
    \end{equation}
    Note that for every $i$, $n_i$ can be chosen to be sufficiently large so that the $o_n(1)$ term in Lemma~\ref{lem:free_energy_interpolation} is $\leq 1/d_i$, and under this assumption we have $\left(\Phi_{n_i,d_i}^\bis \left(\frac{\beta}{\sqrt{d_i}}\right)\right)_{i \in \mathbb{N}}$ as a family of functions of $\beta$ converges pointwise\footnote{The convergence is in fact uniform in any bounded interval.} to $\lim_{i \to \infty}\Phi_{n_i,\SK}^\bis \left(\beta\right)$. 
    Here we remark that it is known that the limit $\lim_{n \to \infty} \Phi_{n,\SK}(\beta)$ exists and is differentiable in $\beta$~\cite{talagrand_parisi_2006,talagrand_parisi_2006-1}, and by Corollary~\ref{cor:SK_bis_free_energy} we have 
    \begin{equation}
    \lim_{n \to \infty}\Phi_{n,\SK} \left(\beta\right) = \lim_{n \to \infty}\Phi_{n,\SK}^\bis \left(\beta\right) = \lim_{i \to \infty}\Phi_{n_i,\SK}^\bis \left(\beta\right).
    \end{equation} 
    By Proposition~\ref{prop:griffiths_lemma}, we have
    \begin{equation}
    \lim_{i \to \infty} \frac{\partial}{\partial \beta} \Phi_{n_i,d_i}^\bis \left(\frac{\beta}{\sqrt{d_i}}\right) = \frac{\partial}{\partial \beta} \lim_{n \to \infty} \Phi_{n,\SK}^\bis(\beta) = \lim_{n \to \infty}\frac{\partial}{\partial \beta}  \Phi_{n,\SK}^\bis (\beta)= \lim_{i \to \infty}\frac{\partial}{\partial \beta}  \Phi_{n_i,\SK}^\bis(\beta) \label{eqn:partial_limit}
    \end{equation}
    for every $\beta \in \mathbb{R}$. By \eqref{eqn:free_energy_to_average_energy_bis}, we have
\begin{equation}
    \frac{\partial }{\partial \beta} \Phi_{n,d}^\bis \left(\frac{\beta}{\sqrt{d}}\right) = - \frac{1}{\sqrt{d}\cdot n}\E_{\bA}\left[\langle H_{d}(\sigma) \rangle_{\beta/\sqrt{d}, \bA}^\bis\right],\label{eqn:partial_limit_sparse}
\end{equation}
and 
\begin{equation}
\frac{\partial}{\partial \beta}\Phi_{n,\SK}^\bis(\beta) = -\frac{1}{n} \E_{\bg}[\langle H_{\SK}(\sigma) \rangle_{\beta, \bg}^\bis]. \label{eqn:partial_limit_SK}
\end{equation}
\eqref{eqn:partial_limit}, \eqref{eqn:partial_limit_sparse}, and \eqref{eqn:partial_limit_SK} imply that
\begin{equation}
    \lim_{i \to \infty} \frac{1}{n_i} \left| \E_{\bg}[\langle H_{\SK}(\sigma) \rangle_{\beta_0, \bg}^\bis] - \frac{1}{\sqrt{d_i}}\E_{\bA}[\langle H_{d_i}(\sigma) \rangle_{\beta_0/\sqrt{d_i}, \bA}^\bis] \right| = 0.
\end{equation}
However, this contradicts  \eqref{eqn:lim_H_epsilon}, so we obtain part (b).

For part (a), note that by Theorem~\ref{theorem:phi_sparse_limit_bis}, for any bounded open interval $I \subset [0, +\infty)$,
we can choose the sequence $(d_i, n_i)_{i \in \mathbb{N}}$ to satisfy the additional requirement that 
\begin{equation}
    \lim_{i \to \infty}\Phi_{n_i, d_i}^\bis(\beta/ \sqrt{d_i}) = \lim_{i \to \infty}\Phi_{n_i, d_i}(\beta/ \sqrt{d_i})
\end{equation}

for all $\beta \in I$. In particular, we can choose $I$ to include any desired $\beta_0 > 0$ (when $\beta = 0$ the Gibbs measure is uniform and part (a) trivially holds). We can then invoke Proposition~\ref{prop:griffiths_lemma} again and proceed with the same proof.
\end{proof}

\section{Interpolating the average overlap between SK and the diluted model}\label{sec:average_overlap}

In this section we prove Proposition~\ref{lem:overlap_interpolation}. Since the proofs are mostly the same, we will only prove item (a) in Proposition~\ref{lem:overlap_interpolation}. We first state the following well-known fact which relates the average energy to the average overlap of the Gibbs distribution in the SK model.

\begin{lemma}[See e.g. \cite{panchenko_sherrington-kirkpatrick_2013}]\label{lemma:SK_overlap_energy}
For every $\beta \in \mathbb{R}$, we have 
\begin{equation}
\frac{1}{n}\E_\bg\left[\langle H_{\SK}(\sigma) \rangle \right] = \frac{\beta}{2}\E_\bg[1 - \langle R_{1,2}^2\rangle_{\beta, \bg}].
\end{equation}
\end{lemma}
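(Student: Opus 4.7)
The plan is to apply Gaussian integration by parts (Stein's lemma) to each coupling constant $\bg_{ij}$ in the Hamiltonian and exploit the fact that spins are $\pm 1$ valued. Concretely, I would start from the expansion
\[
\E_\bg[\langle H_{\SK}(\sigma)\rangle_{\beta,\bg}] \;=\; \sum_{i,j=1}^n \E_\bg\bigl[\bg_{ij}\,\langle \sigma(i)\sigma(j)\rangle_{\beta,\bg}\bigr],
\]
and apply Stein's identity $\E[g F(g)]=\sigma^2\E[F'(g)]$ for $g\sim N(0,\sigma^2)$ to each term, with respect to $\bg_{ij}$ having variance $1/(2n)$.

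The central computation is the derivative $\partial_{\bg_{ij}}\langle \sigma(i)\sigma(j)\rangle_{\beta,\bg}$. Since $\partial_{\bg_{ij}} H(\sigma;\bg)=\sigma(i)\sigma(j)$, the quotient rule applied to $\langle f\rangle=\sum_\sigma f(\sigma)e^{-\beta H}/Z$ yields, for any $f$,
\[
\partial_{\bg_{ij}}\langle f\rangle \;=\; -\beta\bigl(\langle f\,\sigma(i)\sigma(j)\rangle - \langle f\rangle\langle \sigma(i)\sigma(j)\rangle\bigr).
\]
Taking $f=\sigma(i)\sigma(j)$ and using $(\sigma(i)\sigma(j))^2=1$, this collapses to $-\beta(1-\langle\sigma(i)\sigma(j)\rangle^2)$. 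Thus Stein's lemma gives
\[
\E_\bg\bigl[\bg_{ij}\langle\sigma(i)\sigma(j)\rangle\bigr] \;=\; -\tfrac{\beta}{2n}\,\E_\bg\bigl[1 - \langle\sigma(i)\sigma(j)\rangle^2\bigr].
\]

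The final step is to sum over $i,j$ and rewrite the overlap contribution in terms of $R_{1,2}$. Using the replica trick $\langle\sigma(i)\sigma(j)\rangle^2=\langle\sigma_1(i)\sigma_1(j)\sigma_2(i)\sigma_2(j)\rangle$ with $\sigma_1,\sigma_2$ independent samples from $\mu_{\beta,\bg}$, we have
\[
\sum_{i,j=1}^n \langle\sigma(i)\sigma(j)\rangle^2 \;=\; \Bigl\langle\Bigl(\textstyle\sum_{i=1}^n\sigma_1(i)\sigma_2(i)\Bigr)^{\!2}\Bigr\rangle \;=\; n^2\,\langle R_{1,2}^2\rangle.
\]
Combining the pieces and dividing by $n$ gives the claimed identity (modulo the sign convention used for the Gibbs measure $e^{-\beta H}/Z$, which one reconciles by matching to the free-energy derivative $\partial_\beta\Phi_{n,\SK}=-\frac{1}{n}\E[\langle H_{\SK}\rangle]$).

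There is no real obstacle here: the only things to check are the mild regularity needed to invoke Stein's lemma (the partition function is smooth and strictly positive in each $\bg_{ij}$, and $\langle\sigma(i)\sigma(j)\rangle$ is bounded, so integrability of the derivative is immediate) and the careful bookkeeping of the two replicas when squaring a Gibbs average. Everything else is the bounded-spin identity $\sigma(i)^2\equiv 1$ and a single application of the quotient rule.
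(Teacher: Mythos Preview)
Your proposal is correct and is essentially the same approach as the paper's: the paper simply says the lemma follows from Gaussian integration by parts together with Proposition~\ref{prop:overlap_partial_mu}, which computes $\sum_{\sigma}\sum_{i,j}\sigma(i)\sigma(j)\,\partial_{\bX_{ij}}\mu(\sigma)=-\beta n^2(1-\langle R_{1,2}^2\rangle)$. Your computation of $\partial_{\bg_{ij}}\langle\sigma(i)\sigma(j)\rangle=-\beta(1-\langle\sigma(i)\sigma(j)\rangle^2)$ followed by the replica identity and the sum over $i,j$ is exactly this same calculation, just with the sum over $\sigma$ carried out before rather than after differentiating; your caveat about the sign is also well taken, since with the convention $\mu\propto e^{-\beta H}$ the identity comes out as $-\tfrac{\beta}{2}\E_\bg[1-\langle R_{1,2}^2\rangle]$.
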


The proof of Lemma~\ref{lemma:SK_overlap_energy} uses the following proposition.

\begin{proposition}\label{prop:overlap_partial_mu}
    Let $\bX \in \mathbb{R}^{n \times n}, \beta \in \mathbb{R}$. We have 
    \begin{equation}
    \sum_{\sigma \in \{-1, 1\}^n}\sum_{i,j = 1}^n\sigma(i)\sigma(j)\frac{\partial }{\partial \bX_{ij}}\mu(\sigma; \beta, \bX) = -\beta n^2 \left(1 - \langle R_{1,2}^2\rangle_{\beta, \bX}\right)
    \end{equation}
\end{proposition}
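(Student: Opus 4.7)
The strategy is a direct computation using the quotient rule and the standard replica trick. I would not expect any conceptual obstacle; the only thing to be careful about is bookkeeping, particularly the step that converts a sum of squared two-point correlations into a Gibbs average of the squared overlap.

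First, since $H(\sigma;\bX)=\sum_{k,\ell}\bX_{k\ell}\sigma(k)\sigma(\ell)$, we have $\partial H(\sigma;\bX)/\partial \bX_{ij}=\sigma(i)\sigma(j)$, so differentiating $\exp(-\beta H(\sigma;\bX))$ yields an extra factor $-\beta\sigma(i)\sigma(j)$. Applying the same computation to the partition function gives $\partial Z/\partial \bX_{ij}=-\beta Z\,\langle\sigma(i)\sigma(j)\rangle_{\beta,\bX}$. Combining these via the quotient rule,
\begin{equation}
\frac{\partial \mu(\sigma;\beta,\bX)}{\partial \bX_{ij}}=-\beta\,\mu(\sigma;\beta,\bX)\Bigl(\sigma(i)\sigma(j)-\langle\sigma(i)\sigma(j)\rangle_{\beta,\bX}\Bigr).
\end{equation}

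Next, I would multiply by $\sigma(i)\sigma(j)$ and sum over $\sigma\in\{-1,1\}^n$ and $i,j\in[n]$. Using $\sigma(i)^2=\sigma(j)^2=1$, the first term inside the parentheses contributes $\sum_{\sigma}\mu(\sigma)\cdot 1 = 1$ for each pair $(i,j)$, totalling $n^2$. The second term contributes $\sum_{i,j}\langle\sigma(i)\sigma(j)\rangle_{\beta,\bX}^{2}$. So the whole expression equals
\begin{equation}
-\beta\left[n^2-\sum_{i,j=1}^{n}\langle\sigma(i)\sigma(j)\rangle_{\beta,\bX}^{2}\right].
\end{equation}

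Finally, I would apply the replica identity: introducing two independent copies $\sigma_1,\sigma_2$ from $\mu_{\beta,\bX}$,
\begin{equation}
\sum_{i,j=1}^{n}\langle\sigma(i)\sigma(j)\rangle_{\beta,\bX}^{2}
=\Bigl\langle\sum_{i,j=1}^{n}\sigma_1(i)\sigma_2(i)\sigma_1(j)\sigma_2(j)\Bigr\rangle_{\beta,\bX}
=\Bigl\langle\Bigl(\sum_{i=1}^{n}\sigma_1(i)\sigma_2(i)\Bigr)^{2}\Bigr\rangle_{\beta,\bX}
=n^{2}\langle R_{1,2}^{2}\rangle_{\beta,\bX}.
\end{equation}
Substituting this back yields $-\beta n^{2}(1-\langle R_{1,2}^{2}\rangle_{\beta,\bX})$, which is the claimed identity. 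The only step that is more than routine calculus is the replica rewriting in the last display, so that is what I would take care to spell out explicitly.
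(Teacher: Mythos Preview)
Your argument is correct and is essentially the same as the paper's: both compute $\partial\mu/\partial\bX_{ij}$ via the quotient rule to obtain $-\beta\mu(\sigma)\bigl(\sigma(i)\sigma(j)-\langle\sigma(i)\sigma(j)\rangle\bigr)$, then sum over $\sigma,i,j$ and identify the result with $n^2\langle R_{1,2}^2\rangle$ using two replicas. The only difference is cosmetic—the paper writes the second term as an explicit sum over a replica $\sigma'$ from the outset, whereas you first pass through $\sum_{i,j}\langle\sigma(i)\sigma(j)\rangle^2$ before unfolding it.
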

\begin{proof}
For every $\sigma \in \{-1, 1\}^n$ and $i, j \in [n]$, we have
\begin{align}
& \frac{\partial }{\partial \bX_{ij}}\mu(\sigma; \beta, \bX) \nonumber \\
=\, & \frac{\partial }{\partial \bX_{ij}}\left(\frac{ \exp\left(-\beta H(\sigma; \bX)\right)}{Z(\beta, \bX)}\right)\nonumber \\
=\, & (-\beta) \left( \frac{\sigma(i)\sigma(j)\exp\left(-\beta H(\sigma; \bX)\right)}{Z(\beta, \bX)} - \frac{\sum_{\sigma' \in \{-1, 1\}^n}\sigma'(i)\sigma'(j)\exp\left(-\beta H(\sigma; \bX)\right)\exp\left(-\beta H(\sigma'; \bX)\right)}{Z(\beta, \bX)^2} \right) \nonumber \\
=\, & (-\beta) \left(\sigma(i)\sigma(j)\mu(\sigma; \beta, \bX) - \sum_{\sigma' \in \{-1, 1\}^n}\sigma'(i)\sigma'(j)\cdot\mu(\sigma; \beta, \bX)\mu(\sigma'; \beta, \bX)\right) . \label{eqn:mu_partial}
\end{align}
Summing over all $\sigma, i, j$, we obtain
\begin{align*}
    &\sum_{\sigma \in \{-1, 1\}^n}\sum_{i,j = 1}^n\sigma(i)\sigma(j)\frac{\partial }{\partial \bX_{ij}}\mu(\sigma; \beta, \bX) \\
    =\, & \sum_{\sigma \in \{-1, 1\}^n}\sum_{i,j = 1}^n\sigma(i)\sigma(j)\cdot(-\beta) \left(\sigma(i)\sigma(j)\mu(\sigma; \beta, \bX) - \sum_{\sigma' \in \{-1, 1\}^n}\sigma'(i)\sigma'(j)\cdot\mu(\sigma; \beta, \bX)\mu(\sigma'; \beta, \bX)\right)\\
    =\, & -\beta n^2 \left(1 - \langle R_{1,2}^2\rangle_{\beta, \bX}\right). \qedhere
\end{align*}
\end{proof}

Lemma~\ref{lemma:SK_overlap_energy} can be proved using Proposition~\ref{prop:overlap_partial_mu} and the Gaussian integration by parts formula. We prove a statement similar to Lemma~\ref{lemma:SK_overlap_energy}, but for the sparse model. We will use the following lemma, known as the Stein-Chen identity for the Poisson distribution, in place of Gaussian integration by parts. 

\begin{lemma}[\cite{chen_poisson_1975}]\label{lemma:stein-chen}
    Let $X \sim \Po(\lambda)$. For any bounded function $f$, we have
    \begin{equation}
    \E[X\cdot f(X)] = \lambda \E[f(X + 1)].
    \end{equation}
\end{lemma}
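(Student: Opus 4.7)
The plan is to prove the identity by a direct computation from the Poisson probability mass function, since the statement is an elementary one-line manipulation once the sum is written out. Let $p_k = \mathbf{P}[X = k] = e^{-\lambda}\lambda^k/k!$ for $k \geq 0$. I would write
\[
\E[X \cdot f(X)] = \sum_{k=0}^\infty k\,f(k)\,\frac{e^{-\lambda}\lambda^k}{k!},
\]
observe that the $k=0$ term vanishes on account of the factor $k$, and cancel $k$ against $k!$ to obtain $\sum_{k=1}^\infty f(k)\,e^{-\lambda}\lambda^k/(k-1)!$.

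Next I would re-index via $j := k-1$ to rewrite the sum as
\[
\sum_{j=0}^\infty f(j+1)\,\frac{e^{-\lambda}\lambda^{j+1}}{j!} \;=\; \lambda\sum_{j=0}^\infty f(j+1)\,\frac{e^{-\lambda}\lambda^{j}}{j!} \;=\; \lambda\,\E[f(X+1)],
\]
which is exactly the claim. The boundedness hypothesis on $f$ ensures absolute convergence, so the splitting off of the $k=0$ term and the re-indexing are legitimate (Fubini/rearrangement is fine).

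There is no real obstacle here; this is essentially the one-line verification that the Poisson law satisfies the Stein equation $\lambda\,\E[g(X+1)] - \E[X\,g(X)] = 0$ for any bounded $g$. The only thing one might wish to remark on is that the identity continues to hold under the weaker hypothesis $\E|X\,f(X)| < \infty$ (for instance $f$ of polynomial growth), but for our applications boundedness of $f$ on the finite-configuration summations that appear is more than enough, so I would just record the statement as written.
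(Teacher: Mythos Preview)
Your proof is correct. Note, however, that the paper does not supply its own proof of this lemma: it is stated with a citation to \cite{chen_poisson_1975} and used as a black box. Your direct computation from the Poisson mass function is the standard elementary verification, and nothing more is needed.
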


\begin{lemma}\label{lemma:taylor_series_for_mu}
    Let $\bJ_{ij}$ be the matrix with 1 in the $(i, j)$-th entry and 0 everywhere else. Then there exists $\xi \in [0, 1]$ such that
    \begin{equation}
    \left|\mu(\sigma; \beta / \sqrt{d}, \bX+\bJ_{ij}) - \mu(\sigma; \beta / \sqrt{d}, \bX) - \frac{\partial }{\partial \bX_{ij}}\mu(\sigma; \beta / \sqrt{d}, \bX)\right| \leq \frac{3\beta^2}{d}\cdot\mu(\sigma; \beta / \sqrt{d}, \bX + \xi\bJ_{ij}). \label{eqn:second_der_bound_result}
    \end{equation}
\end{lemma}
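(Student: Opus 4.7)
The plan is to view the left-hand side as the Taylor remainder for a one-variable function and control the second derivative using the explicit formula for the first derivative derived in the proof of Proposition~\ref{prop:overlap_partial_mu}. Concretely, set
\[
f(t) := \mu(\sigma; \beta/\sqrt{d}, \bX + t\bJ_{ij}), \qquad t \in [0,1],
\]
and let $\langle \cdot \rangle_t$ denote the Gibbs average at inverse temperature $\beta/\sqrt{d}$ with coupling matrix $\bX + t\bJ_{ij}$. Then the quantity I need to bound equals $f(1) - f(0) - f'(0)$, and by Taylor's theorem with the Lagrange remainder there exists $\xi \in [0,1]$ with
\[
f(1) - f(0) - f'(0) = \tfrac{1}{2}f''(\xi),
\]
so it suffices to show $|f''(\xi)| \le (6\beta^2/d)\,f(\xi)$.

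First, I would compute $f'(t)$. Since $H(\sigma; \bX + t\bJ_{ij}) = H(\sigma;\bX) + t\,\sigma(i)\sigma(j)$, equation \eqref{eqn:mu_partial} (used earlier in the proof of Proposition~\ref{prop:overlap_partial_mu}) gives
\[
f'(t) = -\frac{\beta}{\sqrt{d}}\bigl(\sigma(i)\sigma(j) - \langle \sigma(i)\sigma(j)\rangle_t\bigr)\,f(t).
\]
Next, differentiating $\langle \sigma(i)\sigma(j)\rangle_t$ with respect to $t$ and using $(\sigma(i)\sigma(j))^2 = 1$ yields
\[
\frac{d}{dt}\langle \sigma(i)\sigma(j)\rangle_t = -\frac{\beta}{\sqrt{d}}\bigl(1 - \langle \sigma(i)\sigma(j)\rangle_t^2\bigr).
\]
Substituting this and $f'(t)$ into the product-rule expansion of $f''(t)$ and simplifying with $(\sigma(i)\sigma(j))^2 = 1$ gives the clean identity
\[
f''(t) = -\,\frac{2\beta^2}{d}\,\langle \sigma(i)\sigma(j)\rangle_t\bigl(\sigma(i)\sigma(j) - \langle\sigma(i)\sigma(j)\rangle_t\bigr)\,f(t).
\]

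Finally, I bound the prefactor: since $\sigma(i)\sigma(j)\in\{-1,+1\}$ and $\langle\sigma(i)\sigma(j)\rangle_t\in[-1,1]$, the product of the two bracketed terms is bounded in absolute value by $|m| + m^2 \le 2$ (with $m := \langle\sigma(i)\sigma(j)\rangle_t$), so $|f''(t)| \le (4\beta^2/d)\,f(t)$. Combined with the Taylor identity above, this gives
\[
\bigl|f(1) - f(0) - f'(0)\bigr| \le \tfrac{1}{2}|f''(\xi)| \le \frac{2\beta^2}{d}\,f(\xi) \le \frac{3\beta^2}{d}\,\mu(\sigma;\beta/\sqrt{d},\bX+\xi\bJ_{ij}),
\]
as claimed. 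The proof is essentially a routine second-order Taylor bound; the only step that requires a little care is computing $f''$ and applying the identity $(\sigma(i)\sigma(j))^2 = 1$ to collapse the naive bound into an expression uniformly controlled by a constant times $(\beta^2/d)\,f(t)$, so I do not anticipate any real obstacle.
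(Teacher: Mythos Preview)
Your proof is correct and follows essentially the same route as the paper: apply Taylor's theorem with Lagrange remainder and bound the second derivative of $t\mapsto \mu(\sigma;\beta/\sqrt{d},\bX+t\bJ_{ij})$ pointwise by a constant times $(\beta^2/d)\mu$. The only difference is cosmetic---the paper bounds the two pieces of $f''$ separately to obtain $6\beta^2/d$, whereas you first collapse them via $(\sigma(i)\sigma(j))^2=1$ to the compact form $-\tfrac{2\beta^2}{d}\,m_t(\sigma(i)\sigma(j)-m_t)f(t)$ and thereby get the slightly sharper constant $4\beta^2/d$; either bound suffices for the stated lemma.
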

\begin{proof}
    Since $\mu$ is infinitely differentiable in each entry of $\bX$, by Taylor's Theorem, there exists $\xi \in [0, 1]$ such that 
    \begin{equation}
    \mu(\sigma; \beta / \sqrt{d}, \bX+\bJ_{ij}) - \mu(\sigma; \beta / \sqrt{d}, \bX) - \frac{\partial }{\partial \bX_{ij}}\mu(\sigma; \beta / \sqrt{d}, \bX) = \frac{1}{2}\cdot \frac{\partial^2 }{\partial \bX_{ij}^2}\mu(\sigma; \beta / \sqrt{d}, \bX + \xi\bJ_{ij}).\label{eqn:second_der_bound_step1}
    \end{equation}
    It remains to bound $ \frac{\partial^2 }{\partial \bX_{ij}^2}\mu(\sigma; \beta / \sqrt{d}, \bX + \xi\bJ_{ij})$. By \eqref{eqn:mu_partial}, we have (omitting $\beta/\sqrt{d}, \bX + \xi\bJ_{ij}$ from the notation for simplicity)
    \begin{align*}
        \frac{\partial^2 }{\partial \bX_{ij}^2}\mu(\sigma) & = \frac{\partial }{\partial \bX_{ij}}\left(\left(-\frac{\beta}{\sqrt{d}}\right) \left(\sigma(i)\sigma(j) - \sum_{\sigma' \in \{-1, 1\}^n}\sigma'(i)\sigma'(j)\mu(\sigma')\right)\mu(\sigma)\right) \\
        & = \left(-\frac{\beta}{\sqrt{d}}\right)^2 \left(-\sum_{\sigma' \in \{-1, 1\}^n}\sigma'(i)\sigma'(j)\left(\sigma'(i)\sigma'(j) - \sum_{\sigma'' \in \{-1, 1\}^n}\sigma''(i)\sigma''(j)\mu(\sigma'')\right)\mu(\sigma')\right)\mu(\sigma) \\
        & \qquad\qquad + \left(-\frac{\beta}{\sqrt{d}}\right)^2 \left(\sigma(i)\sigma(j) - \sum_{\sigma' \in \{-1, 1\}^n}\sigma'(i)\sigma'(j)\mu(\sigma')\right)^2\mu(\sigma)  \\
        & = \left(-\frac{\beta}{\sqrt{d}}\right)^2 \left(-1 + \sum_{\sigma',\sigma'' \in \{-1, 1\}^n}\sigma'(i)\sigma'(j)\sigma''(i)\sigma''(j)\mu(\sigma')\mu(\sigma'')\right)\mu(\sigma) \\
        & \qquad\qquad + \left(-\frac{\beta}{\sqrt{d}}\right)^2 \left(\sigma(i)\sigma(j) - \sum_{\sigma' \in \{-1, 1\}^n}\sigma'(i)\sigma'(j)\mu(\sigma')\right)^2\mu(\sigma). 
    \end{align*}
    Since 
    \begin{equation}
    \left|\sum_{\sigma',\sigma'' \in \{-1, 1\}^n}\sigma'(i)\sigma'(j)\sigma''(i)\sigma''(j)\mu(\sigma')\mu(\sigma'')\right| =  \left|\langle\sigma'(i)\sigma'(j)\sigma''(i)\sigma''(j)\rangle\right| \leq 1
    \end{equation}
    and similarly 
    \begin{equation}
    \left|\sum_{\sigma' \in \{-1, 1\}^n}\sigma'(i)\sigma'(j)\mu(\sigma')\right| = |\langle \sigma'(i)\sigma'(j)\rangle| \leq 1,
    \end{equation}
    it follows that 
    \begin{equation}
    \label{eqn:second_der_bound_step2}
    \left|\frac{\partial^2 }{\partial \bX_{ij}^2}\mu(\sigma)\right| \leq \frac{\beta^2}{d}(2 + 4)\mu(\sigma) = \frac{6\beta^2}{d}\mu(\sigma).
    \end{equation}
    \eqref{eqn:second_der_bound_step1} and \eqref{eqn:second_der_bound_step2} together imply \eqref{eqn:second_der_bound_result}.
\end{proof}

\begin{proposition}\label{prop:xi_estimate}
    Let $\bJ_{ij}$ be the matrix with 1 in the $(i, j)$-th entry and 0 everywhere else. For any $\xi \in \mathbb{R}$, we have
    \begin{equation}
    \e^{-2\beta|\xi|} \cdot \mu(\sigma; \beta, \bX) \leq \mu(\sigma; \beta, \bX + \xi \bJ_{ij}) \leq \e^{2\beta|\xi|}\cdot \mu(\sigma; \beta, \bX)
    \end{equation}
\end{proposition}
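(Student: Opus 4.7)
The plan is to exploit the fact that the perturbation $\xi\bJ_{ij}$ affects the Hamiltonian in a very controlled way: it shifts $H(\sigma;\bX)$ by exactly $\xi\sigma(i)\sigma(j)$, which lies in $[-|\xi|,|\xi|]$ uniformly in $\sigma$. So both the Gibbs weight $\exp(-\beta H(\sigma;\bX))$ and the partition function $Z(\beta,\bX)$ change by bounded multiplicative factors, and these factors cancel cleanly in the ratio $\mu = \exp(-\beta H)/Z$ to give a factor of at most $e^{2|\beta||\xi|}$.

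Concretely, I would first write
\[
H(\sigma;\bX+\xi\bJ_{ij}) = \sum_{k,l} \bX_{kl}\sigma(k)\sigma(l) + \xi\sigma(i)\sigma(j) = H(\sigma;\bX) + \xi\sigma(i)\sigma(j),
\]
which immediately yields
\[
\exp(-\beta H(\sigma;\bX+\xi\bJ_{ij})) = \exp(-\beta\xi\sigma(i)\sigma(j))\cdot\exp(-\beta H(\sigma;\bX)).
\]
Since $\sigma(i)\sigma(j)\in\{-1,+1\}$, the prefactor $\exp(-\beta\xi\sigma(i)\sigma(j))$ is sandwiched between $e^{-\beta|\xi|}$ and $e^{\beta|\xi|}$ (treating $\beta\geq 0$; otherwise use $|\beta|$).

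Next, summing over $\sigma$ I would obtain the two-sided bound
\[
e^{-\beta|\xi|}\,Z(\beta,\bX) \;\leq\; Z(\beta,\bX+\xi\bJ_{ij}) \;\leq\; e^{\beta|\xi|}\,Z(\beta,\bX),
\]
because each summand is multiplied by a number in $[e^{-\beta|\xi|},e^{\beta|\xi|}]$. Finally, forming the ratio $\mu(\sigma;\beta,\bX+\xi\bJ_{ij}) = \exp(-\beta H(\sigma;\bX+\xi\bJ_{ij}))/Z(\beta,\bX+\xi\bJ_{ij})$ and combining the two one-sided estimates (worst-case numerator and denominator in each direction) produces the factor $e^{\pm 2\beta|\xi|}$, which is exactly the claim.

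There is no real obstacle here: the argument is essentially the observation that $\log\mu(\sigma;\beta,\cdot)$ is Lipschitz in each entry of $\bX$ with Lipschitz constant at most $2|\beta|$, because a rank-one change to $\bX$ alters both $-\beta H(\sigma;\bX)$ and $-\log Z(\beta,\bX)$ by at most $|\beta\xi|$ each. The only bookkeeping issue is sign conventions for $\beta$; if the paper means the literal expression $2\beta|\xi|$, the argument is for $\beta\geq 0$ and the general case is recovered by replacing $\beta$ with $|\beta|$ throughout.
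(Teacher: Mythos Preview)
Your proposal is correct and follows essentially the same argument as the paper: compute $H(\sigma;\bX+\xi\bJ_{ij}) = H(\sigma;\bX) + \xi\sigma(i)\sigma(j)$, bound the resulting multiplicative factor on the Gibbs weight by $e^{\pm\beta|\xi|}$, sum to get the same bound on $Z$, and combine the two in the ratio. Your remark about needing $|\beta|$ rather than $\beta$ for general $\beta$ is a valid caveat that the paper's statement also glosses over.
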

\begin{proof}
    We have
    \begin{equation}
    \exp(-\beta H(\sigma; \bX + \xi \bJ_{ij})) = \exp(-\beta H(\sigma; \bX) -\beta H(\sigma; \xi \bJ_{ij})) = \exp(-\beta H(\sigma; \bX))\cdot\e^{-\beta\xi\sigma(i)\sigma(j)},
    \end{equation}
    which implies that   
    \begin{equation}
     \e^{-\beta|\xi|}\exp(-\beta H(\sigma; \bX)) \leq \exp(-\beta H(\sigma; \bX + \xi \bJ_{ij})) \leq \e^{\beta|\xi|}\exp(-\beta H(\sigma; \bX)).
    \end{equation}
    Summing over $\sigma \in \{-1, 1\}^n$, we obtain
    \begin{equation}\label{eq:Z_lipschitz}
     \e^{-\beta|\xi|}Z(\beta, \bX) \leq Z(\beta, \bX + \xi \bJ_{ij}) \leq \e^{\beta|\xi|}Z(\beta, \bX).
    \end{equation}
    Since these quantities are all strictly positive, we have
    \begin{equation}
        \mu(\sigma; \beta, \bX + \xi \bJ_{ij}) = \frac{\exp(-\beta H(\sigma; \bX + \xi \bJ_{ij}))}{Z(\beta, \bX + \xi \bJ_{ij})} \leq \frac{\e^{\beta |\xi|}\exp(-\beta H(\sigma; \bX))}{\e^{-\beta|\xi|}Z(\beta, \bX)} = \e^{2\beta|\xi|}\cdot \mu(\sigma; \beta, \bX)
    \end{equation}
     and similarly
     \begin{equation}
         \e^{-2\beta|\xi|}\cdot \mu(\sigma; \beta, \bX) \leq \mu(\sigma; \beta, \bX + \xi \bJ_{ij}).
     \end{equation}
\end{proof}

\begin{lemma}\label{lem:graph_overlap_energy}
For every $\beta \in \mathbb{R}$, we have
\begin{equation}
\frac{1}{\sqrt{d}n}\E\left[\left\langle H_{d}(\sigma) \right\rangle_{\frac{\beta}{\sqrt{d}}, \bA} \right]= \frac{\sqrt{d}}{2} \cdot \E\left[\left\langle \left(\frac{1}{n}\sum_{i = 1}^n \sigma(i)\right)^2\right\rangle_{\frac{\beta}{\sqrt{d}}, \bA}\right] -\frac{\beta}{2}\E\left[1 - \langle R_{1,2}^2\rangle_{\frac{\beta}{\sqrt{d}}, \bA}\right] + O_d\left(\frac{1}{\sqrt{d}}\right).
\end{equation}
\end{lemma}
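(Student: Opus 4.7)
The plan is to mimic the Gaussian integration by parts proof of Lemma~\ref{lemma:SK_overlap_energy}, but using the Stein--Chen identity (Lemma~\ref{lemma:stein-chen}) in place of Gaussian IBP, with the extra error terms controlled by the Taylor bound of Lemma~\ref{lemma:taylor_series_for_mu}.

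First I would write
\[
\E\bigl[\langle H_d(\sigma)\rangle_{\beta/\sqrt d,\bA}\bigr]
 = \sum_{i,j=1}^n \E\bigl[\bA_{ij}\,\langle\sigma(i)\sigma(j)\rangle_{\beta/\sqrt d,\bA}\bigr],
\]
and, for each fixed pair $(i,j)$, condition on all other entries of $\bA$ and apply Lemma~\ref{lemma:stein-chen} to the bounded function $\bA_{ij}\mapsto \langle\sigma(i)\sigma(j)\rangle_{\beta/\sqrt d,\bA}$ with parameter $\lambda=d/(2n)$. This yields
\[
\E\bigl[\bA_{ij}\,\langle\sigma(i)\sigma(j)\rangle_{\beta/\sqrt d,\bA}\bigr]
 = \frac{d}{2n}\,\E\bigl[\langle\sigma(i)\sigma(j)\rangle_{\beta/\sqrt d,\,\bA+\bJ_{ij}}\bigr].
\]

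Next, I would use Lemma~\ref{lemma:taylor_series_for_mu} to Taylor-expand $\mu(\sigma;\beta/\sqrt d,\bA+\bJ_{ij})$ around $\bA$, multiply through by $\sigma(i)\sigma(j)$, and sum over $\sigma\in\{-1,1\}^n$. Because $\sum_\sigma\mu(\sigma;\beta/\sqrt d,\bA+\xi\bJ_{ij})=1$ and $|\sigma(i)\sigma(j)|\le 1$, the remainder contributes an absolute error at most $3\beta^2/d$ per pair $(i,j)$. This leaves
\[
\langle\sigma(i)\sigma(j)\rangle_{\beta/\sqrt d,\,\bA+\bJ_{ij}}
 = \langle\sigma(i)\sigma(j)\rangle_{\beta/\sqrt d,\bA}
 + \sum_{\sigma}\sigma(i)\sigma(j)\,\frac{\partial\mu(\sigma;\beta/\sqrt d,\bA)}{\partial\bA_{ij}}
 + \varepsilon_{ij},
\]
with $|\varepsilon_{ij}|\le 3\beta^2/d$. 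Now I would sum over all $i,j\in[n]$. The zeroth-order contribution collapses to the squared magnetization, $\sum_{i,j}\langle\sigma(i)\sigma(j)\rangle=\bigl\langle\bigl(\sum_i\sigma(i)\bigr)^{2}\bigr\rangle=n^2\bigl\langle m(\sigma)^2\bigr\rangle$; the first-order contribution is handled by Proposition~\ref{prop:overlap_partial_mu} (applied at inverse temperature $\beta/\sqrt d$), giving $-\tfrac{\beta}{\sqrt d}n^2(1-\langle R_{1,2}^2\rangle_{\beta/\sqrt d,\bA})$; and the remainder contributes at most $3\beta^2 n^2/d$ in absolute value.

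Assembling these pieces, multiplying by $d/(2n)$ and taking expectation over $\bA$, then dividing by $\sqrt d\, n$, produces exactly the claimed identity with the cubic-in-$\beta$ remainder of size $O(\beta^2/\sqrt d)=O_d(1/\sqrt d)$. The only place that requires care, and which I would treat as the main (mild) obstacle, is verifying that the Taylor remainder from Lemma~\ref{lemma:taylor_series_for_mu}, which is evaluated at the shifted matrix $\bA+\xi\bJ_{ij}$ rather than at $\bA$ itself, does not spoil the summation; this is resolved by the fact that Lemma~\ref{lemma:taylor_series_for_mu} bounds the pointwise error in terms of $\mu(\sigma;\beta/\sqrt d,\bA+\xi\bJ_{ij})$, whose $\sigma$-sum is $1$, so the error is uniformly $O(\beta^2/d)$ per pair independent of the randomness and of the shift. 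Everything else is bookkeeping.
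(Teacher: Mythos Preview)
Your approach is exactly the paper's: Stein--Chen to shift $\bA\mapsto\bA+\bJ_{ij}$, then Taylor-expand $\mu$ using Lemma~\ref{lemma:taylor_series_for_mu}, identify the zeroth-order term as $n^2\langle m(\sigma)^2\rangle$, the first-order term via Proposition~\ref{prop:overlap_partial_mu}, and control the second-order remainder.

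There is one genuine gap in your handling of the remainder. You write that the $\sigma$-sum of $\mu(\sigma;\beta/\sqrt d,\bA+\xi\bJ_{ij})$ equals $1$, and hence the error is at most $3\beta^2/d$ per pair. But the $\xi$ coming from Taylor's theorem in Lemma~\ref{lemma:taylor_series_for_mu} depends on $\sigma$ (and on $i,j$): it is really $\xi_{\sigma,i,j}\in[0,1]$. Consequently
\[
\sum_{\sigma}\mu\bigl(\sigma;\beta/\sqrt d,\bA+\xi_{\sigma,i,j}\bJ_{ij}\bigr)
\]
is \emph{not} a sum of a single probability measure and has no a priori reason to equal $1$; without further input this bound could be as bad as $2^n$. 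The paper closes this gap with Proposition~\ref{prop:xi_estimate}, which gives the uniform comparison
\[
\mu\bigl(\sigma;\beta/\sqrt d,\bA+\xi\bJ_{ij}\bigr)\le e^{2|\beta|/\sqrt d}\,\mu\bigl(\sigma;\beta/\sqrt d,\bA\bigr),
\]
so that the $\sigma$-sum is bounded by $e^{2|\beta|/\sqrt d}$ and the total remainder is $\tfrac{3\beta^2}{d}\,n^2 e^{2|\beta|/\sqrt d}$, still $O_d(1/\sqrt d)$ after the final normalization. Once you insert this step, your argument is complete and identical to the paper's.
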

\begin{proof}
    For simplicity, we drop the subscripts from $\langle \cdot \rangle$. We have 
    \begin{align}
        \frac{1}{\sqrt{d}n}\E\left[\left\langle H_{d}(\sigma) \right\rangle \right] & = \frac{1}{\sqrt{d}n}\E\left[ \sum_{\sigma \in \{-1, 1\}^n}\sum_{i, j = 1}^n \sigma(i)\sigma(j)\bA_{ij}\cdot \mu\left(\sigma; \beta / \sqrt{d}, \bA\right) \right] \nonumber\\ 
        & = \frac{1}{\sqrt{d}n}\cdot\frac{d}{2n}\cdot \E\left[ \sum_{\sigma \in \{-1, 1\}^n}\sum_{i, j = 1}^n \sigma(i)\sigma(j)\mu\left(\sigma; \beta / \sqrt{d}, \bA + \bJ_{ij}\right) \right] \nonumber\\
        & = \frac{\sqrt{d}}{2n^2}\cdot \E\left[ \sum_{\sigma \in \{-1, 1\}^n}\sum_{i, j = 1}^n \sigma(i)\sigma(j)\mu\left(\sigma; \beta / \sqrt{d}, \bA + \bJ_{ij}\right) \right]. \label{eqn:taylor_step1}
    \end{align}
    Here in the second equality we invoked Lemma~\ref{lemma:stein-chen}. By Lemma~\ref{lemma:taylor_series_for_mu}, for some $\xi_{\sigma, i, j} \in [0, 1]$ we have
    \begin{align}
    &\Bigg|\E\Bigg[ \sum_{\sigma \in \{-1, 1\}^n}\sum_{i, j = 1}^n \sigma(i)\sigma(j)\mu\left(\sigma; \beta / \sqrt{d}, \bA + \bJ_{ij}\right) \Bigg] \nonumber \\
    & \qquad - \E\Bigg[ \sum_{\sigma \in \{-1, 1\}^n}\sum_{i, j = 1}^n \sigma(i)\sigma(j)\left(\mu\left(\sigma; \beta / \sqrt{d}, \bA\right) + \frac{\partial}{\partial \bX_{ij}}\mu\left(\sigma; \beta / \sqrt{d}, \bA\right)\right)\Bigg]\Bigg| \nonumber \\
    \leq\,& \frac{3\beta^2}{d}\cdot \E\left[ \sum_{\sigma \in \{-1, 1\}^n}\sum_{i, j = 1}^n \mu\left(\sigma; \beta / \sqrt{d}, \bA + \xi_{\sigma, i, j}\bJ_{ij}\right) \right] \nonumber \\
    \leq\,& \frac{3\beta^2}{d}\cdot n^2\cdot\e^{2\beta/\sqrt{d}}\cdot\E\Bigg[ \sum_{\sigma \in \{-1, 1\}^n} \mu\left(\sigma; \beta / \sqrt{d}, \bA\right) \Bigg] \nonumber \\
    =\,& \frac{3\beta^2}{d}\cdot n^2\cdot\e^{2\beta/\sqrt{d}}, \label{eqn:taylor_step2}
    \end{align}
    where the last inequality is due to Proposition~\ref{prop:xi_estimate}. We observe that   \begin{equation}\label{eqn:taylor_step3}
        \sum_{\sigma \in \{-1, 1\}^n}\sum_{i, j = 1}^n \sigma(i)\sigma(j)\mu\left(\sigma; \beta / \sqrt{d}, \bA\right)= n^2\cdot \left\langle \left(\frac{1}{n}\sum_{i = 1}^n \sigma(i)\right)^2\right\rangle.
    \end{equation}
    By Proposition~\ref{prop:overlap_partial_mu}, we also have    
    \begin{equation}\label{eqn:taylor_step4}
    \sum_{\sigma \in \{-1, 1\}^n}\sum_{i,j = 1}^n\sigma(i)\sigma(j)\frac{\partial }{\partial \bA_{ij}}\mu\left(\sigma; \beta / \sqrt{d}, \bA\right) = -\frac{\beta}{\sqrt{d}} \cdot n^2 \left(1 - \langle R_{1,2}^2\rangle\right).
    \end{equation}
    The lemma follows by combining \eqref{eqn:taylor_step1}, \eqref{eqn:taylor_step2}, \eqref{eqn:taylor_step3}, and \eqref{eqn:taylor_step4}.
\end{proof}
The next proposition deals with the first term in the above lemma. Note that this term is trivially bounded in the bisection models for all $\beta \in \mathbb{R}$.
\begin{proposition}\label{prop:magnetization_zero}
For every $\beta \geq 0$, we have
\begin{equation}
\lim_{d \to \infty}\sqrt{d} \cdot \limsup_{n \to \infty}\E\left[\left\langle \left(\frac{1}{n}\sum_{i = 1}^n \sigma(i)\right)^2\right\rangle_{\beta/\sqrt{d}, \bA}\right] = 0.
\end{equation}
\end{proposition}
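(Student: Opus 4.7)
The plan is to show the stronger statement $\limsup_{n\to\infty}\E_{\bA}[\langle m(\sigma)^2\rangle_{\beta/\sqrt d,\bA}] = 0$ for each fixed $d > 0$ and $\beta \geq 0$; the proposition follows trivially, since $\sqrt d\cdot 0 = 0$. Consider the magnetized free energy
\[
\Phi_n(h) := \frac{1}{n}\,\E_{\bA}\!\left[\log\sum_{\sigma}\exp\!\left(-\tfrac{\beta}{\sqrt d}H_d(\sigma) + h\sum_{i=1}^n\sigma(i)\right)\right],\qquad h\in\mathbb{R}.
\]
Three elementary facts drive the argument: (i) $\Phi_n$ is convex in $h$ as a log-partition function in its canonical parameter; (ii) $\Phi_n(h) = \Phi_n(-h)$ by the spin-flip symmetry $H_d(-\sigma) = H_d(\sigma)$, hence $\Phi_n'(0) = 0$; and (iii) a direct computation yields $\Phi_n''(0) = n\cdot\E_{\bA}[\langle m(\sigma)^2\rangle_{\beta/\sqrt d,\bA}]$. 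The task reduces to showing $\Phi_n''(0) = o(n)$ as $n\to\infty$.

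To control $\Phi_n''(0)$, I would extend the Dembo--Montanari--Sen interpolation (Lemma~\ref{lem:free_energy_interpolation}) to include the field $h$. Because the term $h\sum_i\sigma(i)$ is linear in $\sigma$ and does not couple to the random matrix being interpolated between Gaussian and Poisson, the derivation carries over almost verbatim and gives $|\Phi_n(h) - \Phi_{n,\SK}(\beta,h)| \leq O_\beta(1/\sqrt d) + o_n(1)$ uniformly on bounded intervals of $h$, where $\Phi_{n,\SK}(\beta,h)$ is the analogous SK magnetized free energy. Combined with Theorem~\ref{theorem:phi_sparse_limit_bis} and Corollary~\ref{cor:SK_bis_free_energy}, this yields pointwise convergence $\Phi_n(h)\to\Phi_{\SK}^\infty(\beta,h)$ as $n\to\infty$ and then $d\to\infty$; convexity upgrades this to uniform convergence on compact intervals.

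Griffith's lemma (Proposition~\ref{prop:griffiths_lemma}) then transfers the convergence to first derivatives: $\Phi_n'(h)\to m_{\SK}^\infty(\beta,h)$ at every $h$ where the limit is differentiable. The SK model has no spontaneous magnetization, and its magnetic susceptibility is bounded by a finite constant $C(\beta)$ for every $\beta$ (a standard consequence of the Parisi formula / Onsager reaction bound), so $m_{\SK}^\infty(\beta,h)$ is continuous and odd in $h$ with $m_{\SK}^\infty(\beta,0) = 0$ and $|m_{\SK}^\infty(\beta,h)| \leq C(\beta)|h|$ near $0$. Combining with the identity $\Phi_n''(0) = \lim_{h\to 0^+}\Phi_n'(h)/h$ (valid because $\Phi_n$ is smooth in $h$ for finite $n$) and a double-limit argument exploiting the uniform-on-compacts convergence, one obtains $\limsup_n \Phi_n''(0) \leq C(\beta) < \infty$. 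Hence $\E_{\bA}[\langle m^2\rangle] = \Phi_n''(0)/n = O(1/n) \to 0$, proving the claim.

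The main obstacle is the interchange of limits between $h\to 0^+$ and $n\to\infty$ in the final step. Griffith's lemma supplies only pointwise convergence of first derivatives, whereas $\Phi_n''(0)$ is a second-derivative-at-zero quantity. Resolving this requires either a quantitative second-order version of Griffith's lemma adapted to the present convex sequences, or a uniform control argument combining convexity of $\Phi_n$ with the explicit local linear behavior of $m_{\SK}^\infty(\beta,\cdot)$ near $0$. The pairing of uniform convergence on compact intervals (available from convexity) with the bounded SK susceptibility near $h=0$ is what makes the interchange valid.
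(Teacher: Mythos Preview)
Your approach is genuinely different from the paper's. The paper never introduces an external field; instead it argues directly that any configuration $\sigma$ with $|m(\sigma)|>\epsilon/d^{1/4}$ has Gibbs weight exponentially smaller than its nearest bisection $\tau\in A_n$, because the anti-ferromagnetic energy satisfies $H_d(\sigma)-H_d(\tau)\gtrsim d\Delta^2/n\gtrsim\epsilon^2\sqrt d\,n$ with high probability (Poisson concentration, where $\Delta$ is the number of flipped spins). Splitting $\langle m^2\rangle$ into the pieces $|m|\le\epsilon/d^{1/4}$ and $|m|>\epsilon/d^{1/4}$ then gives $\sqrt d\cdot\E[\langle m^2\rangle]\le\epsilon^2+o_n(1)$.

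Your argument has a genuine gap at precisely the step you flag. The conclusion $\limsup_n\Phi_n''(0)\le C(\beta)$ does \emph{not} follow from uniform-on-compacts convergence of convex functions to a smooth limit, and Griffith's lemma transfers only first derivatives. For a concrete obstruction, take $\Phi_n(h)=h^2+\tfrac{1}{n}\bigl(\sqrt{h^2+n^{-4}}-n^{-2}\bigr)$: this sequence is convex, converges uniformly on compacts to $h\mapsto h^2$, yet $\Phi_n''(0)=2+n\to\infty$. Your proposed fix, writing $\Phi_n''(0)=\lim_{h\to 0^+}\Phi_n'(h)/h$ and interchanging with $\limsup_n$, is exactly the move that fails: Griffith gives $\Phi_n'(h)\to\Phi_\infty'(h)$ only for each fixed $h>0$, which controls nothing at the scale $h\sim 1/n$ where $\Phi_n''(0)$ lives. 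Bounded SK susceptibility bounds $\Phi_\infty'(h)/h$ near $0$, not $\Phi_n'(h)/h$ uniformly in $n$.

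There is also a structural mismatch. You announce the conclusion for each fixed $d$, but your route goes through the SK comparison, which carries an irreducible $O(1/\sqrt d)$ error; for fixed $d$ the relevant limit is $\Phi_{d,\infty}(h):=\lim_n\Phi_n(h)$, whose existence and differentiability at $h=0$ you would need to establish independently. Even granting those, the most one could extract by a soft argument is $\E[\langle m^2\rangle]\to 0$ (via the contrapositive: if $\E[\langle m^2\rangle]$ stays bounded away from zero and concentrates over $\bA$, then $\Phi_\infty$ acquires a kink at $0$), not the quantitative $\Phi_n''(0)=O(1)$ bound your sketch asserts.
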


\begin{proof}
Fix an arbitrary $\epsilon > 0$ and let $S(n,\epsilon) = \{\sigma \in \{-1, 1\}^n \mid -\epsilon n \leq \sum_{i = 1}^n \sigma(i) \leq \epsilon n\}$. We have
\begin{align*}
    & \E\left[\left\langle \left(\frac{1}{n}\sum_{i = 1}^n \sigma(i)\right)^2\right\rangle_{\beta /\sqrt{d}, \bA}\right] \\
    =\, & \E\left[\sum_{\sigma \in \{-1, 1\}^n} \left(\frac{1}{n}\sum_{i = 1}^n \sigma(i)\right)^2\cdot \mu\left(\sigma;\beta/\sqrt{d}, \bA\right)\right] \\
    \leq\, &  \E\left[\sum_{\sigma \in S(n,\epsilon / d^{1/4})} \left(\frac{\epsilon}{d^{1/4}}\right)^2 \cdot \mu\left(\sigma;\beta/\sqrt{d}, \bA\right)\right] + \E\left[\sum_{\sigma \in \{-1, 1\}^n 
    \backslash S(n,\epsilon / d^{1/4})}1\cdot \mu\left(\sigma;\beta/\sqrt{d}, \bA\right)\right]\\
    \leq\, &  \left(\frac{\epsilon}{d^{1/4}}\right)^2 + \E\left[\sum_{\sigma \in \{-1, 1\}^n 
    \backslash S(n,\epsilon / d^{1/4})} \mu\left(\sigma;\beta/\sqrt{d}, \bA\right)\right]. \numberthis \label{eq:magnetization_bound}
\end{align*}

Recall that $A_n = \{\sigma \in \{-1, 1\}^n: |\sum_{i = 1}^n\sigma(i)| \leq 1\}$. 
Given $\sigma \in \{-1, 1\}^n \backslash S(n,\epsilon / d^{1/4})$, take a $\tau \in A_n$ such that $R_{1,2}(\tau, \sigma) = \max_{\sigma' \in A_n} R_{1,2}(\sigma', \sigma)$. By 
Proposition~\ref{prop:poisson_concentration} and \eqref{eq:H_tau-sigma}, we have that there exists a constant $C = C(\epsilon) > 0$ such that with probability $1 - 2\cdot\exp(-C d^{1/4}n)$ we have $H_d(\sigma) - H_d(\tau) \geq \frac{\epsilon^2 \sqrt{d}n}{4}$, and consequently 
\begin{equation}
\mu(\sigma; \beta / \sqrt{d}, \bA) \leq \mu(\tau; \beta / \sqrt{d}, \bA) \cdot \exp\left(-\frac{\beta\epsilon^2n}{4}\right).
\end{equation}
By taking expectations, we obtain
\begin{align*}
\E[\mu(\sigma; \beta / \sqrt{d}, \bA)] & \leq \E[\mu(\tau; \beta / \sqrt{d}, \bA)] \cdot \exp\left(-\frac{\beta\epsilon^2n}{4}\right) + 2\cdot\exp(-C d^{1/4}n) \\
& \leq \frac{1}{|A_n|}\cdot \exp\left(-\frac{\beta\epsilon^2n}{4}\right) + 2\cdot\exp(-C d^{1/4}n). \numberthis \label{eq:mu_bound}
\end{align*}
Here in the last inequality we used the fact that $\E[\mu(\tau; \beta / \sqrt{d}, \bA)]$ is the same for all $\tau \in A_n$.

Combining \eqref{eq:magnetization_bound} and \eqref{eq:mu_bound}, we obtain
\begin{equation}
    \sqrt{d}\cdot\E\left[\left\langle \left(\frac{1}{n}\sum_{i = 1}^n \sigma(i)\right)^2\right\rangle_{\beta /\sqrt{d}, \bA}\right] \leq \epsilon^2 + \sqrt{d}\cdot2^n \left(\frac{1}{|A_n|}\cdot\exp\left(-\frac{\beta\epsilon^2n}{4}\right) + 2\cdot\exp(-C d^{1/4}n)\right).
\end{equation}
If $d$ is sufficiently large, the above gives
\begin{equation}
    \sqrt{d} \cdot \limsup_{n \to \infty}\E\left[\left\langle \left(\frac{1}{n}\sum_{i = 1}^n \sigma(i)\right)^2\right\rangle_{\beta /\sqrt{d}, \bA}\right] \leq \epsilon^2.
\end{equation}
The proposition follows since $\epsilon$ is chosen arbitrarily.
\end{proof}

We are now ready to prove Proposition~\ref{lem:overlap_interpolation}.

\begin{proof}[Proof of Proposition~\ref{lem:overlap_interpolation}]
    For part (a), by Lemma~\ref{lemma:SK_overlap_energy}, Lemma~\ref{lem:graph_overlap_energy}, and Proposition~\ref{prop:magnetization_zero}, we have
    \begin{align*}
    &\lim_{d \to \infty}\limsup_{n \to \infty}\frac{1}{n}\left|\E_\bg\left[\langle H_{\SK}(\sigma)\rangle_{\beta,\bg}\right] - \frac{1}{\sqrt{d}}\E_\bA\left[\langle H_{d}(\sigma)\rangle_{\beta/\sqrt{d},\bA}\right]\right| \\
    =\,\,& 
    \lim_{d \to \infty}\left(\frac{\beta}{2}\limsup_{n \to \infty}\left|\E_\bg\left[1 - \langle R_{1,2}^2\rangle_{\beta, \bg}\right]- \E_\bA\left[1 - \langle R_{1,2}^2\rangle_{\frac{\beta}{\sqrt{d}}, \bA}\right]\right| + \frac{\sqrt{d}}{2} \E\left[\left\langle \left(\frac{1}{n}\sum_{i = 1}^n \sigma(i)\right)^2\right\rangle_{\frac{\beta}{\sqrt{d}}, \bA}\right] + O_d\left(\frac{1}{\sqrt{d}}\right)\right)\\
    =\,\,& 
    \frac{\beta}{2} \cdot \lim_{d \to \infty}\limsup_{n \to \infty}\left|\E_\bg\left[\langle R_{1,2}^2\rangle_{\beta, \bg}\right]-\E_\bA\left[\langle R_{1,2}^2\rangle_{\frac{\beta}{\sqrt{d}}, \bA}\right]\right|.
    \end{align*}
    Part (a) then follows by applying Proposition~\ref{lem:average_energy_SK_sparse}. Part (b) follows in a similar manner and we omit the details.
\end{proof}

\section{Disorder chaos for sparse models}
\label{sec:disorder_chaos}

Let $\bg, \bg_t,\bA,\bA_t$ be random matrices as defined in Section~\ref{secOverview} and let $S \subseteq [-1,1]$. We define the following short-hand notation:
\begin{align}
Z^{S}_{\bg,\bg_t} & = \sum_{\substack{\sigma_1, \sigma_2 \in \{-1, 1\}^n\\ R_{1,2}(\sigma_1, \sigma_2) \in S}} \exp\left(-\beta(H(\sigma_1; \bg) + H(\sigma_2; \bg_t))\right),\\
Z^{S,\bis}_{\bg,\bg_t} & = \sum_{\substack{\sigma_1, \sigma_2 \in A_n\\ R_{1,2}(\sigma_1, \sigma_2) \in S}} \exp\left(-\beta(H(\sigma_1; \bg) + H(\sigma_2; \bg_t))\right)
\end{align}
$Z^S_{\bA,\bA_t}$ and $Z^{S,\bis}_{\bA,\bA_t}$ are similarly defined for the sparse models, scaling  $\beta$ by $\sqrt{d}$:
\begin{align}
Z^{S}_{\bA,\bA_t} & = \sum_{\substack{\sigma_1, \sigma_2 \in \{-1, 1\}^n\\ R_{1,2}(\sigma_1, \sigma_2) \in S}} \exp\left(-\frac{\beta}{\sqrt{d}}\cdot(H(\sigma_1; \bA) + H(\sigma_2; \bA_t))\right),\\
Z^{S,\bis}_{\bA,\bA_t} & = \sum_{\substack{\sigma_1, \sigma_2 \in A_n\\ R_{1,2}(\sigma_1, \sigma_2) \in S}} \exp\left(-\frac{\beta}{\sqrt{d}}\cdot(H(\sigma_1; \bA) + H(\sigma_2; \bA_t))\right).
\end{align}

It is known that the SK model exhibits disorder chaos at any temperature.

\begin{theorem}[Theorem 9 in~\cite{chen_variational_2017}]\label{thm:sk_pos_temp_chaos}
    Let $\beta \in \mathbb{R}$, $t > 0$. Fix an arbitrary $\epsilon > 0$. Let $I_\epsilon = [-1, -\epsilon] \cup [\epsilon, 1]$. There exists some constant $K > 0$ such that for every $n \in \mathbb{N}$,
    \begin{equation}    \E\left[\frac{Z^{I_\epsilon}_{\bg, \bg_t}}{Z^{[-1,1]}_{\bg, \bg_t}}\right] \leq K \cdot \exp(- n / K).
    \end{equation}
\end{theorem}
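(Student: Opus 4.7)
The plan is to bound the expected ratio by a free-energy gap argument combined with Gaussian concentration. Writing $\phi_{n,t}(S) := \frac{1}{n}\E[\log Z^{S}_{\bg,\bg_t}]$ for measurable $S \subseteq [-1,1]$, the goal reduces to showing that there is $\delta = \delta(\beta,t,\epsilon) > 0$ with
\[
\phi_{n,t}(I_\epsilon) \leq \phi_{n,t}([-1,1]) - \delta
\]
for all $n$ large, together with the fact that both $\log Z^{I_\epsilon}_{\bg,\bg_t}$ and $\log Z^{[-1,1]}_{\bg,\bg_t}$ concentrate around their means at scale $\sqrt{n}$.

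For the free-energy gap I would derive a Parisi-type variational expression for the limit of the overlap-pinned two-replica free energy $\phi_{n,t}(\{q\})$ via a Guerra-style interpolation: mix the two-replica SK Hamiltonian with a decoupled cavity field whose moments are tuned to a trial functional order parameter, and use the resulting interpolation bound plus an Aizenman--Sims--Starr matching lower bound to obtain a formula $\mathcal{P}_t(q)$ whose supremum over $q$ equals the limit of $\phi_{n,t}([-1,1])$. The perturbation parameter $t > 0$ enters through the covariance $\E[H(\sigma_1;\bg)H(\sigma_2;\bg_t)] \propto \sqrt{1-t}\cdot n\, R_{1,2}(\sigma_1,\sigma_2)^2$, so the effective coupling between the two replicas is strictly weakened when $t>0$. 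The analytic core is to show that $\mathcal{P}_t(q)$ attains its maximum uniquely at $q = 0$ whenever $t > 0$, and moreover with a quantitative gap, i.e.\ $\sup_{q \in I_\epsilon}\mathcal{P}_t(q) \leq \sup_{q}\mathcal{P}_t(q) - \delta$.

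Once the gap is in hand, Gaussian concentration closes the argument. The map $\bg \mapsto \frac{1}{\sqrt{n}}\log Z^{S}_{\bg,\bg_t}$ is $O(\beta)$-Lipschitz in the Frobenius norm for every $S$, so by Borell--TIS both quantities deviate from their means by more than $s\sqrt{n}$ with probability at most $\exp(-\Omega(s^2))$. On the event where both are within $(\delta/4)\sqrt{n}$ of their means,
\[
\frac{Z^{I_\epsilon}_{\bg,\bg_t}}{Z^{[-1,1]}_{\bg,\bg_t}} \leq \exp\!\bigl(n(\phi_{n,t}(I_\epsilon) - \phi_{n,t}([-1,1])) + (\delta/2)\sqrt{n}\bigr) \leq \exp(-\delta n / 2)
\]
for large $n$, while on the complement the trivial bound $Z^{I_\epsilon}_{\bg,\bg_t}/Z^{[-1,1]}_{\bg,\bg_t} \leq 1$ combined with the sub-Gaussian tail contributes at most $\exp(-\Omega(n))$, and taking expectations yields the desired $K\exp(-n/K)$ estimate.

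The principal obstacle is establishing the strict variational gap with quantitative control in $\epsilon$. The Parisi formula for the coupled system is itself a variational principle over a class of functional order parameters, and one must rule out any maximizer at a $q$ bounded away from zero. The natural route is to show that any maximizer $q^\ast$ satisfies a fixed-point equation whose only solution is $q^\ast = 0$ once $t > 0$, and then to promote this uniqueness into a quantitative gap by continuity and compactness in $q$; this is essentially the content of Chen's coupled-free-energy variational identity and is where the bulk of the technical work lies.
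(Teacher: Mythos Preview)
The paper does not prove this theorem: it is quoted verbatim as ``Theorem~9 in~\cite{chen_variational_2017}'' and used as a black box. In fact, the paper's Proposition~\ref{prop:coupled_SK_free_energy_gap} goes in the \emph{opposite} direction to your proposal: it derives the free-energy gap
\[
\limsup_{n\to\infty}\frac1n\Bigl(\E\log Z^{I_\epsilon}_{\bg,\bg_t}-\E\log Z^{[-1,1]}_{\bg,\bg_t}\Bigr)\le -\tfrac{1}{K}
\]
\emph{from} Theorem~\ref{thm:sk_pos_temp_chaos} via Jensen's inequality, whereas you are proposing to prove Theorem~\ref{thm:sk_pos_temp_chaos} by first establishing the free-energy gap and then running concentration to recover the expected-ratio bound. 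Your overall route (two-replica Guerra--Talagrand bound $\Rightarrow$ variational formula maximized uniquely at $q=0$ when $t>0$ $\Rightarrow$ quantitative gap $\Rightarrow$ concentration) is indeed the architecture of Chen's proof, so at the level of strategy you are aligned with the original source rather than with the present paper.

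That said, your concentration step contains a genuine quantitative slip. You condition on the event that both $\log Z^{I_\epsilon}_{\bg,\bg_t}$ and $\log Z^{[-1,1]}_{\bg,\bg_t}$ lie within $(\delta/4)\sqrt{n}$ of their means, and then assert that the complementary event, combined with the trivial bound $Z^{I_\epsilon}/Z^{[-1,1]}\le 1$, contributes $\exp(-\Omega(n))$. But Borell--TIS gives $\Pr\bigl(|\log Z - \E\log Z|>s\sqrt{n}\bigr)\le \exp(-c s^2)$, so with $s=\delta/4$ the bad event has probability $\exp(-c\delta^2/16)$, a constant independent of $n$; its contribution to the expectation is therefore of constant order, not $\exp(-\Omega(n))$. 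The fix is to take the deviation threshold at scale $n$ rather than $\sqrt{n}$: on the event that both log-partition functions are within $(\delta/4)n$ of their means one gets the ratio bounded by $\exp(-\delta n/2)$, while the complement now has probability at most $\exp\bigl(-c(\delta/4)^2 n\bigr)$, and summing the two pieces yields the claimed $K\exp(-n/K)$. With this correction your outline is sound.
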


Theorem~\ref{thm:sk_pos_temp_chaos} immediately implies that the overlap of two configurations sampled from the coupled system $\mu_{\bg,\bg_t}$ is nearly zero.

\begin{corollary}
    If $t > 0$, then
    \begin{equation}
    \lim_{n \to \infty} \E\left[\langle R_{1,2}^2\rangle_{\beta, \bg,\bg_t}\right] = 0.
    \end{equation}
\end{corollary}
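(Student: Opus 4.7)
The plan is to deduce the corollary directly from Theorem~\ref{thm:sk_pos_temp_chaos} by a standard truncation argument on the overlap.

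First I would observe that the ratio appearing in Theorem~\ref{thm:sk_pos_temp_chaos} is exactly the probability that the overlap is large under the coupled Gibbs measure. Concretely,
\begin{equation}
\frac{Z^{I_\epsilon}_{\bg,\bg_t}}{Z^{[-1,1]}_{\bg,\bg_t}} = \langle \mathbf{1}\{|R_{1,2}(\sigma_1,\sigma_2)| \geq \epsilon\}\rangle_{\beta, \bg, \bg_t},
\end{equation}
since the numerator sums $\exp(-\beta(H(\sigma_1;\bg) + H(\sigma_2;\bg_t)))$ over pairs with $|R_{1,2}| \geq \epsilon$, while the denominator is the full partition function of the coupled system. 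Hence Theorem~\ref{thm:sk_pos_temp_chaos} gives $\E[\langle \mathbf{1}\{|R_{1,2}|\geq\epsilon\}\rangle_{\beta,\bg,\bg_t}] \leq K\exp(-n/K)$.

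Next I would split the squared overlap according to whether $|R_{1,2}|$ is small or large. Using $|R_{1,2}| \leq 1$, for every realization of the couplings we have
\begin{equation}
\langle R_{1,2}^2\rangle_{\beta,\bg,\bg_t} \leq \epsilon^2 + \langle \mathbf{1}\{|R_{1,2}|\geq\epsilon\}\rangle_{\beta,\bg,\bg_t}.
\end{equation}
Taking expectation over $\bg,\bg'$ and applying the bound above yields $\E[\langle R_{1,2}^2\rangle_{\beta,\bg,\bg_t}] \leq \epsilon^2 + K\exp(-n/K)$, so $\limsup_{n\to\infty} \E[\langle R_{1,2}^2\rangle_{\beta,\bg,\bg_t}] \leq \epsilon^2$. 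Sending $\epsilon \to 0$ finishes the proof.

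There is no real obstacle here; the only thing to be careful about is correctly identifying the ratio $Z^{I_\epsilon}/Z^{[-1,1]}$ as a Gibbs probability under the coupled measure, which follows immediately from the definitions of $\mu_{\beta,\bg}$ and $\mu_{\beta,\bg_t}$ together with the fact that the two samples are drawn independently given $(\bg,\bg_t)$.
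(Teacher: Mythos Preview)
Your proof is correct and is essentially identical to the paper's argument: both split the expectation of $R_{1,2}^2$ according to whether $|R_{1,2}|<\epsilon$ or $|R_{1,2}|\geq\epsilon$, bound the first piece by $\epsilon^2$ and the second by $\E\big[Z^{I_\epsilon}_{\bg,\bg_t}/Z^{[-1,1]}_{\bg,\bg_t}\big]$, and then invoke Theorem~\ref{thm:sk_pos_temp_chaos}. The only cosmetic difference is that you phrase the ratio as a coupled-Gibbs indicator expectation, which is exactly the same object.
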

\begin{proof}
    For any $\epsilon > 0$, we have
    \begin{align*}
        \E\left[\langle R_{1,2}^2\rangle_{\beta, \bg,\bg_t}\right] & \leq \epsilon^2 \cdot \E\left[\frac{Z^{[-
        \epsilon, \epsilon]}_{\bg, \bg_t}}{Z^{[-1,1]}_{\bg, \bg_t}}\right] + 1 \cdot \E\left[\frac{Z^{I_\epsilon}_{\bg, \bg_t}}{Z^{[-1,1]}_{\bg, \bg_t}}\right]  \\ 
        & \leq \epsilon^2 + K \cdot \exp(-n/K).        
    \end{align*}
    Here we used Theorem~\ref{thm:sk_pos_temp_chaos} and the fact that $Z^{[-
        \epsilon, \epsilon]}_{\bg, \bg_t} \leq Z^{[-1,1]}_{\bg, \bg_t}$. It follows that $\limsup_{n \to \infty} \E\left[\langle R_{1,2}^2\rangle_{\beta, \bg,\bg_t}\right] \leq \epsilon^2$ for any arbitrary $\epsilon > 0$, so the corollary follows. 
\end{proof}

The exponentially small fraction in Theorem~\ref{thm:sk_pos_temp_chaos} can be translated into a constant gap between the free energy of coupled and uncoupled systems.
\begin{proposition}\label{prop:coupled_SK_free_energy_gap}
For every $\epsilon > 0$, there exists some constant $K > 0$ such that 
\begin{equation}
\limsup_{n \to \infty} \left(\frac{1}{n}\E\left[\log Z^{I_\epsilon}_{\bg,\bg_t}\right] - \frac{1}{n}\E\left[\log Z^{[-1, 1]}_{\bg,\bg_t}\right] \right)\leq -\frac{1}{K}.
\end{equation}
\end{proposition}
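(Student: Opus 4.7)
The plan is a one-line application of Jensen's inequality to the estimate already supplied by Theorem~\ref{thm:sk_pos_temp_chaos}. Fix $\epsilon>0$ and let $K_0>0$ be the constant from that theorem, so that for every $n$,
\[
\E\!\left[\frac{Z^{I_\epsilon}_{\bg,\bg_t}}{Z^{[-1,1]}_{\bg,\bg_t}}\right]\le K_0\,e^{-n/K_0}.
\]
Since the ratio is a strictly positive random variable (the denominator is a sum of positive exponentials, and for $\epsilon<1$ the numerator contains at least one realizable term) and $\log$ is concave, Jensen's inequality yields
\[
\E\!\left[\log\frac{Z^{I_\epsilon}_{\bg,\bg_t}}{Z^{[-1,1]}_{\bg,\bg_t}}\right]\le \log\E\!\left[\frac{Z^{I_\epsilon}_{\bg,\bg_t}}{Z^{[-1,1]}_{\bg,\bg_t}}\right]\le \log K_0-\frac{n}{K_0}.
\]

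Next I would split the log of the ratio as $\E[\log Z^{I_\epsilon}_{\bg,\bg_t}]-\E[\log Z^{[-1,1]}_{\bg,\bg_t}]$, which is legitimate once one verifies that both log-partition functions are integrable. That integrability is routine: an upper bound $\log Z^{[-1,1]}_{\bg,\bg_t}\le 2n\log 2+|\beta|\cdot(|H(\sigma_*;\bg)|+|H(\sigma_*;\bg_t)|)$ for any fixed reference pair $\sigma_*$ gives an $L^1$ dominating bound on the positive part, while the lower bound $\log Z^{I_\epsilon}_{\bg,\bg_t}\ge -|\beta|\cdot(|H(\sigma_1;\bg)|+|H(\sigma_2;\bg_t)|)$ for any single realizable pair $(\sigma_1,\sigma_2)$ with overlap in $I_\epsilon$ dominates the negative part, and in both cases the Hamiltonian values are centered Gaussians of variance $O(n)$, hence absolutely integrable.

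Dividing by $n$ and taking $\limsup_{n\to\infty}$ kills the $\log K_0/n$ term, giving the desired conclusion with $K=K_0$:
\[
\limsup_{n\to\infty}\left(\frac{1}{n}\E[\log Z^{I_\epsilon}_{\bg,\bg_t}]-\frac{1}{n}\E[\log Z^{[-1,1]}_{\bg,\bg_t}]\right)\le -\frac{1}{K}.
\]
There is essentially no real obstacle here: the whole content of the proposition is the translation, via Jensen's inequality, of an expected \emph{multiplicative} suppression of the restricted partition function (Theorem~\ref{thm:sk_pos_temp_chaos}) into an \emph{additive} free-energy gap. The mild point worth being explicit about is the integrability check so that one may split the logarithm; everything else is a direct computation.
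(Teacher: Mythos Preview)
Your proposal is correct and follows exactly the same route as the paper: apply Jensen's inequality to the ratio bound from Theorem~\ref{thm:sk_pos_temp_chaos}, split the logarithm, divide by $n$, and take the limit. The only difference is that you are slightly more careful than the paper in spelling out the integrability of $\log Z^{I_\epsilon}_{\bg,\bg_t}$ and $\log Z^{[-1,1]}_{\bg,\bg_t}$ so that $\E[\log(Z^{I_\epsilon}/Z^{[-1,1]})]=\E[\log Z^{I_\epsilon}]-\E[\log Z^{[-1,1]}]$ is justified.
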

\begin{proof}
    By Theorem~\ref{thm:sk_pos_temp_chaos} and Jensen's inequality
    \begin{align*}
    \frac{1}{n}\E\left[\log Z^{I_\epsilon}_{\bg,\bg_t}\right] - \frac{1}{n}\E\left[\log Z^{[-1, 1]}_{\bg,\bg_t}\right]  =  \frac{1}{n} \E\left[\log \frac{Z^{I_\epsilon}_{\bg,\bg_t}}{Z^{[-1, 1]}_{\bg,\bg_t}}\right] \leq \frac{1}{n} \log \E\left[ \frac{Z^{I_\epsilon}_{\bg,\bg_t}}{Z^{[-1, 1]}_{\bg,\bg_t}}\right] 
    \leq -\frac{1}{K} + O\left(\frac{1}{n}\right).
    \end{align*}
    The proposition follows by taking $n$ to infinity.
\end{proof}

The following theorem, which is a generalized version of Lemma~\ref{lem:free_energy_interpolation}, allows us to transfer this free energy gap to sparse models.

\begin{theorem}[See e.g.~\cite{chen_disorder_2018, chen_suboptimality_2019}]\label{thm:strongerfreeenergycorrespondence}
For any $\beta \in \mathbb{R}$,
    \begin{align}
    \left|\frac{1}{n}\E\left[\log Z^{I_\epsilon,\bis}_{\bg,\bg_t}\right] - \frac{1}{n}\E\left[\log Z^{I_\epsilon,\bis}_{\bA,\bA_t}\right]\right| \leq O_d\left(\frac{1}{\sqrt{d}}\right) + \sqrt{d} \cdot o_n(1).
    \end{align}
\end{theorem}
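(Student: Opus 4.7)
The plan is to adapt the Dembo--Montanari--Sen interpolation from Lemma~\ref{lem:free_energy_interpolation} to the coupled two-replica setting, following~\cite{chen_disorder_2018, chen_suboptimality_2019}. The crucial structural observation is that both the bisection restriction (to $A_n \times A_n$) and the overlap restriction $\{R_{1,2}(\sigma_1,\sigma_2)\in I_\epsilon\}$ are restrictions on the configuration space rather than modifications of the Hamiltonian. Consequently they pass through the interpolation argument unchanged: the derivative identities and the moment bounds that drive the proof hold verbatim once the Gibbs average $\langle\,\cdot\,\rangle$ is reinterpreted as the restricted Gibbs average.

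The construction proceeds as follows. I would introduce a one-parameter family of coupled Hamiltonians on $\{-1,1\}^{n}\times\{-1,1\}^{n}$ indexed by $s\in[0,1]$, built from three independent Poisson fields $\bA^{(1-t,s)},\bA^{(t,1,s)},\bA^{(t,2,s)}$ with edge-rates scaled by $(1-s)$ and three matching Gaussian fields with variances scaled by $s$, coupled so that at $s=0$ the Hamiltonian is exactly $(\beta/\sqrt d)(H(\sigma_1;\bA)+H(\sigma_2;\bA_t))$ and at $s=1$ it is $\beta(H(\sigma_1;\bg)+H(\sigma_2;\bg_t))$. Let $\phi(s)=\frac{1}{n}\E[\log Z^{I_\epsilon,\bis}_s]$ be the corresponding interpolated free energy. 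Computing $\phi'(s)$ uses Gaussian integration by parts on the Gaussian contributions and the Stein--Chen identity (Lemma~\ref{lemma:stein-chen}) combined with a Taylor expansion of $\mu(\sigma_1;\,\cdot\,)\mu(\sigma_2;\,\cdot\,)$ in the relevant matrix entries (as in Lemma~\ref{lemma:taylor_series_for_mu}) on the Poisson side. The leading quadratic terms from the two sides match and cancel, leaving only (i) a higher-order Poisson expansion remainder of size $O(\beta^3/\sqrt d)$, and (ii) magnetization contributions $\langle m(\sigma_k)^2\rangle$ which, because we restricted to bisections, are $O(1/n^2)$ and after the $\sqrt d$ rescaling become $\sqrt d\cdot o_n(1)$. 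Integrating the uniform bound $|\phi'(s)|\le O_d(1/\sqrt d)+\sqrt d\cdot o_n(1)$ over $s\in[0,1]$ yields the theorem.

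The main obstacle, compared with the single-replica Lemma~\ref{lem:free_energy_interpolation}, is correctly tracking the joint disorder law across the interpolation. On the sparse side $(\bA,\bA_t)$ share the common Poisson component $\bA^{(1-t)}$; on the Gaussian side $(\bg,\bg_t)$ share covariance $\sqrt{1-t}/(2n)$. The interpolation must transport the three independent Poisson components to Gaussian fields whose pairwise covariances reproduce exactly this structure, so that the $\bg_{ij}(\bg_t)_{ij}$ cross-terms produced by Gaussian integration by parts are matched by the combinatorial terms that emerge from applying Stein--Chen to each Poisson coordinate and collecting cross products. Verifying this matching at second order is essentially a bookkeeping exercise and is the technical heart of~\cite{chen_disorder_2018, chen_suboptimality_2019}. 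Because the overlap restriction $R_{1,2}\in I_\epsilon$ enters only as a truncation of the summation range, the uniform configuration-wise bounds ($|\sigma_k(i)|=1$, $|R_{1,2}|\le 1$, and $|m(\sigma_k)|\le 1/n$ on $A_n$) used to control the remainder terms remain valid, and the final bound is not weakened by the restriction.
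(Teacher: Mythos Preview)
The paper does not prove this theorem itself; it is stated as a citation to~\cite{chen_disorder_2018, chen_suboptimality_2019} and used as a black box. Your sketch correctly outlines the interpolation argument those references carry out, and the structural points you highlight (that the bisection and overlap restrictions pass through as mere truncations of the summation, that the three Poisson components must be matched to Gaussians with the correct cross-covariance $\sqrt{1-t}/(2n)$, and that the bisection constraint forces $\langle m(\sigma_k)^2\rangle = O(1/n^2)$) are exactly the ingredients needed, so there is no gap to flag.
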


We are now ready to prove Lemma~\ref{lem:disorder_chaos_sparse}.

\begin{proof}[Proof of Lemma~\ref{lem:disorder_chaos_sparse}]
    As before, we only prove part (a). Fix some arbitrary $\epsilon > 0$.
    By Theorem~\ref{thm:bisection_free_energy}, for both $S = I_\epsilon$ and $S = [-1, 1]$ we have
    \begin{align}
    \lim_{n \to \infty} \frac{1}{n}\left(  \E\left[\log Z^{S,\bis}_{\bg,\bg_t}\right] - \E\left[\log Z^{S}_{\bg,\bg_t}\right]\right) = 0.
    \end{align}
    By Proposition~\ref{prop:coupled_SK_free_energy_gap}, for some constant $K = K(\epsilon)> 0$, we have
\begin{equation}
\limsup_{n \to \infty} \frac{1}{n}\left(\E\left[\log Z^{I_\epsilon, \bis}_{\bg,\bg_t}\right] - \E\left[\log Z^{[-1, 1], \bis}_{\bg,\bg_t}\right] \right)\leq -\frac{1}{K}.
\end{equation}
We can then use Theorem~\ref{thm:strongerfreeenergycorrespondence} to transfer this gap to the sparse model and obtain
\begin{equation}
\lim_{d\to\infty}\limsup_{n \to \infty} \frac{1}{n}\left(\E\left[\log Z^{I_\epsilon, \bis}_{\bA,\bA_t}\right] -\E\left[\log Z^{[-1, 1], \bis}_{\bA,\bA_t}\right]\right) \leq -\frac{1}{K}.
\end{equation}
    By Theorem~\ref{theorem:phi_sparse_limit_bis_coupled}, we then have 
    \begin{equation}
    \lim_{d \to \infty}\limsup_{n \to \infty}\frac{1}{n}\left(\E\left[\log Z^{I_\epsilon}_{\bA,\bA_t}\right] -\E\left[\log Z^{[-1, 1]}_{\bA,\bA_t}\right]\right) \leq -\frac{1}{K}.
    \end{equation}
    This means that if $d$ is sufficiently large, then for all sufficiently large $n$ we have
    \begin{equation}
    \frac{1}{n}\E\left[\log Z^{I_\epsilon}_{\bA,\bA_t}\right] -\frac{1}{n}\E\left[\log Z^{[-1, 1]}_{\bA,\bA_t}\right] \leq -\frac{1}{2K}.
    \end{equation} 
    By Lemma~\ref{lemma:logZ_sparse_concentration_coupled}, we have that with probability at least $1 - o_n(1)$ 
    \begin{equation}
    \frac{1}{n}\log Z^{I_\epsilon}_{\bA,\bA_t} -\frac{1}{n}\log Z^{[-1, 1]}_{\bA,\bA_t} \leq -\frac{1}{4K},
    \end{equation}
    which rearranges to
    \begin{equation}
    \frac{Z^{I_\epsilon}_{\bA,\bA_t}}{Z^{[-1, 1]}_{\bA,\bA_t}} \leq \exp\left( -\frac{n}{4K}\right).
    \end{equation}
    It follows that 
    \begin{equation}    
    \E[\langle R_{1,2}^2\rangle_{\beta/\sqrt{d},\bA,\bA_t}]\leq \epsilon^2 + \E\left[\frac{Z^{I_\epsilon}_{\bA,\bA_t}}{Z^{[-1, 1]}_{\bA,\bA_t}} \right]\leq \epsilon^2 + \exp\left( -\frac{n}{4K}\right) + o_n(1).
    \end{equation}
    By taking $n$ to infinity, we have $\limsup_{n \to \infty}\E[\langle R_{1,2}^2\rangle_{\beta/\sqrt{d},\bA,\bA_t}] \leq \epsilon^2$. This completes the proof.
\end{proof}

\bibliography{references}
\bibliographystyle{alpha}

\appendix
\section{Free energy of bisection models}\label{section:free_energy_bisection}
\subsection{SK model with zero magnetization}

Recall that we defined 
\[
Z^{S}_{\bg,\bg_t} = \sum_{\substack{\sigma_1, \sigma_2 \in \{-1, 1\}^n\\ R_{1,2}(\sigma_1, \sigma_2) \in S}} \exp\left(-\beta(H(\sigma_1; \bg) + H(\sigma_2; \bg_t))\right)\]
and 
\[
Z^{S,\bis}_{\bg,\bg_t} = \sum_{\substack{\sigma_1, \sigma_2 \in A_n\\ R_{1,2}(\sigma_1, \sigma_2) \in S}} \exp\left(-\beta(H(\sigma_1; \bg) + H(\sigma_2; \bg_t))\right).
\]
where $A_n = \{\sigma \in \{-1, 1\}^n: |\sum_{i = 1}^n\sigma(i)| \leq 1\}$ is the set of configurations in which the numbers of $+1$s and $-1$s differ by at most one.
\begin{lemma}\label{lem:z_cover}
    Let $S \subseteq [-1, 1]$ and assume that $n$ is odd. There exists $p(n) = O(n^3)$ independent of $\beta$ such that
    \[
    Z^{S,\bis}_{\bg,\bg_t} \geq \frac{Z^{S}_{\bg,\bg_t}}{p(n)}
    \]
    with probability at least $1/p(n)$.
\end{lemma}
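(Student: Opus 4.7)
The plan is to show $\E[Z^{S,\bis}_{\bg,\bg_t}/Z^S_{\bg,\bg_t}]\geq c/n$ for an absolute constant $c>0$, using a symmetry of the coupled Gaussian disorder, and then convert this into the desired in-probability bound via a reverse-Markov estimate.

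For the symmetry step, let $w(\sigma_1,\sigma_2):=\exp(-\beta(H(\sigma_1;\bg)+H(\sigma_2;\bg_t)))\cdot\mathbb{1}[R_{1,2}(\sigma_1,\sigma_2)\in S]/Z^S_{\bg,\bg_t}$. For any $\pi\in S_n$ and $\varepsilon\in\{-1,+1\}^n$, the entrywise map $\bg_{ij}\mapsto \varepsilon_i\varepsilon_j\bg_{\pi^{-1}(i)\pi^{-1}(j)}$ applied to both $\bg$ and $\bg'$ preserves the joint law of $(\bg,\bg')$, hence of $(\bg,\bg_t)=(\sqrt{1-t}\bg+\sqrt{t}\bg',\,\bg)$. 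Applying $T=(\pi,\varepsilon)$ simultaneously to both configurations preserves $R_{1,2}$, and a change of variables in the sum defining $Z^S_{\bg,\bg_t}$ shows $w(T\sigma_1,T\sigma_2)\stackrel{d}{=}w(\sigma_1,\sigma_2)$. Moreover, the group generated by $S_n$ and simultaneous sign flips acts transitively on each overlap level set: classifying coordinates by type $(\sigma_1(i),\sigma_2(i))\in\{++,+-,-+,--\}$, simultaneous flips exchange $++\leftrightarrow--$ and $+-\leftrightarrow-+$, while permutations shuffle positions, so only the pair $(n_{++}+n_{--},\,n_{+-}+n_{-+})$ is invariant, and this pair determines $r$.

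It follows that $g(r):=\E[w(\sigma_1,\sigma_2)]$ depends only on $r$. Letting $N(n,r)$ and $N^{\bis}(n,r)$ denote the numbers of ordered pairs in $\{-1,+1\}^{2n}$ and in $A_n\times A_n$ with overlap $r$, summing $w$ over all pairs (respectively over bisection pairs) yields
\[
1=\sum_{r\in S}N(n,r)g(r),\qquad \E\!\left[\frac{Z^{S,\bis}_{\bg,\bg_t}}{Z^S_{\bg,\bg_t}}\right]=\sum_{r\in S}N^{\bis}(n,r)g(r).
\]
A Stirling estimate, using $|A_n|\sim \sqrt{2/\pi}\,\cdot 2^{n+1}/\sqrt{n}$ for $n$ odd and enumerating admissible type-count configurations, gives $N^{\bis}(n,r)/N(n,r)\geq c/n$ uniformly in $r$, with the bound being $\Theta(1/n)$ near $r=0$ and $\Theta(1/\sqrt{n})$ near $r=\pm 1$. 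Consequently $\E[Z^{S,\bis}_{\bg,\bg_t}/Z^S_{\bg,\bg_t}]\geq c/n$.

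Finally, setting $R:=Z^{S,\bis}_{\bg,\bg_t}/Z^S_{\bg,\bg_t}\in[0,1]$, the split $\E[R]=\E[R\mathbb{1}_{R\geq c/(2n)}]+\E[R\mathbb{1}_{R<c/(2n)}]\leq \Pr[R\geq c/(2n)]+c/(2n)$ together with $\E[R]\geq c/n$ yields $\Pr[R\geq c/(2n)]\geq c/(2n)$, so we may take $p(n):=2n/c=O(n^3)$ (in fact $O(n)$ suffices). The main obstacle is the parity bookkeeping in the Stirling estimate: the bisection constraints $\sum\sigma_1,\sum\sigma_2\in\{-1,+1\}$ force $(n_{++}-n_{--},\,n_{+-}-n_{-+})\in\{(\pm 1,0),(0,\pm 1)\}$, and which case is admissible is dictated by the parity of $a=(n+nr)/2$. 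The oddness of $n$ assumed in the lemma is exactly what guarantees that at least one admissible configuration exists at every attainable overlap $r$ (equivalently, every $r$ with $nr$ odd).
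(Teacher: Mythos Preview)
Your argument is correct and in fact yields $p(n)=O(n)$, stronger than the $O(n^3)$ claimed. The symmetry step is sound: the simultaneous action of $S_n\times\{\pm1\}^n$ on pairs preserves the overlap and induces a law-preserving map on $(\bg,\bg_t)$, so $\E[w(\sigma_1,\sigma_2)]$ depends only on $r$; the transitivity on overlap level sets is exactly as you describe. The counting estimate $N^{\bis}(n,r)/N(n,r)\ge c/n$ follows from the central binomial asymptotics after the parity case split you outline, and the reverse-Markov step is immediate.

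The paper's proof is different in spirit. Rather than computing the expected ratio, it constructs (probabilistically) a family $\tau_1,\dots,\tau_{p(n)}\in\{-1,1\}^n$ with $p(n)=O(n^3)$ such that every pair $(\sigma_1,\sigma_2)$ satisfies $(\sigma_1\circ\tau_i,\sigma_2\circ\tau_i)\in A_n\times A_n$ for some $i$; since $\sigma\mapsto\sigma\circ\tau_i$ preserves overlap and induces a distributional equality $Z^{S,\bis\circ\tau_i}_{\bg,\bg_t}\stackrel{d}{=}Z^{S,\bis}_{\bg,\bg_t}$, one gets $\sum_i Z^{S,\bis\circ\tau_i}_{\bg,\bg_t}\ge Z^S_{\bg,\bg_t}$ deterministically, and a union bound then forces $\Pr[Z^{S,\bis}_{\bg,\bg_t}\ge Z^S_{\bg,\bg_t}/p(n)]\ge 1/p(n)$. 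Both proofs rest on the same Gaussian sign-flip invariance; yours additionally uses permutation invariance to obtain full transitivity on overlap level sets, which lets you bypass the covering construction and work directly with expectations. Your route is shorter and gives the sharper $O(n)$, while the paper's covering argument avoids any Stirling-type estimate on the ratio $N^{\bis}/N$ and needs only the cruder lower bound on the central binomial coefficient.
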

\begin{proof}
    For any two configurations $\sigma, \sigma' \in \{-1, 1\}^n$, let $\sigma \circ \sigma' \in \{-1, 1\}^n$ be the configuration obtained by the entry-wise product of $\sigma, \sigma'$, i.e., $\sigma \circ \sigma' (i) = \sigma(i) \cdot \sigma'(i)$ for every $i \in [n]$. We have the following two observations.
    \begin{itemize}
        \item For any $\sigma, \sigma_1, \sigma_2 \in \{-1, 1\}^n$, $R_{1,2}(\sigma_1, \sigma_2) = R_{1,2}(\sigma_1\circ \sigma, \sigma_2 \circ \sigma)$.
        \item Define $Z^{S,\bis\circ\sigma}_{\bg,\bg_t} = \sum_{\substack{\sigma_1, \sigma_2 \in A_n\\ R_{1,2}(\sigma_1, \sigma_2) \in S}} \exp\left(-\beta(H(\sigma_1 \circ \sigma; \bg) + H(\sigma_2 \circ \sigma; \bg_t))\right)$, then for any $\sigma \in \{-1, 1\}^n$, the distribution of $Z^{S,\bis}_{\bg,\bg_t}$ is the same as the distribution of $Z^{S,\bis\circ\sigma}_{\bg,\bg_t}$.
    \end{itemize}
    We first show that there exists $\tau_1, \ldots, \tau_{p(n)} \in \{-1, 1\}^n$ such that for every $\sigma_1, \sigma_2 \in \{-1, 1\}^n$ with $R_{1,2}(\sigma_1, \sigma_2) \in S$, there exists some $i \in [p(n)]$ for which $\sigma_1 \circ \tau_i, \sigma_2 \circ \tau_i \in A_n$. We show this using the probabilistic method. Let's sample $\tau_1, \ldots, \tau_{p(n)} \in \{-1, 1\}^n$ independently and uniformly at random. For any $\sigma_1, \sigma_2 \in \{-1, 1\}^n$, let $\same(\sigma_1, \sigma_2) = \{i \in [n] \mid \sigma_1(i) = \sigma_2(i)\}$ be the set of indices on which $\sigma_1$ and $\sigma_2$ agree and let $k = |\same(\sigma_1, \sigma_2)|$ be the number of such indices.

    Now, if $n$ is odd, in order for $\sigma_1 \circ \tau$ and $\sigma_2 \circ \tau$ to be both in $A_n$, it is sufficient to have
    \[
    \left|\sum_{i \in \same(\sigma_1, \sigma_2)} (\sigma_1\circ\tau)(i)\right| \leq 1, \text{ and } \left|\sum_{i \in [n] \backslash \same(\sigma_1, \sigma_2)} (\sigma_1\circ\tau)(i)\right| \leq 1.
    \]
    The number of $\tau$'s that satisfy these two conditions is at least
    \[
    \binom{k}{\lfloor k/2 \rfloor}\cdot\binom{n -k}{\lfloor (n-k)/2 \rfloor} \geq \binom{\lfloor n/2 \rfloor}{\lfloor n/4 \rfloor}\cdot\binom{\lceil n/2 \rceil}{\lceil n/4 \rceil} \geq (1 - o(1)) \cdot \frac{4}{\pi n} \cdot 2^n.
    \]
    Here we used the asymptotics $\binom{n}{\lfloor n/2\rfloor} \sim \sqrt{\frac{2}{\pi n}}\cdot 2^n$. It follows that for every $i \in [p(n)]$, 
    \[
    \Pr[\sigma_1 \circ \tau_i, \sigma_2 \circ \tau_i \in A_n] \geq (1 - o(1))\cdot\frac{4}{\pi n}.
    \]
    This implies that 
    \[
    \Pr\left[\exists i \big(\sigma_1 \circ \tau_i, \sigma_2 \circ \tau_i \in A_n\big)\right] \geq 1 - \left(1 - (1 - o(1))\cdot \frac{4}{\pi n}\right)^{p(n)}.
    \]
    We can choose some $p(n) = O(n^3)$ such that the above probability is at least $1 - \exp(-n^2)$ for sufficiently large $n$. Now, via a union bound over all pairs $\sigma_1, \sigma_2$, we have that when $n$ is sufficiently large,
    \[
    \Pr\left[\forall \sigma_1, \sigma_2 \in \{-1, 1\}^n, \exists i \big(\sigma_1 \circ \tau_i, \sigma_2 \circ \tau_i \in A_n\big)\right] \geq 1 - 4^n \cdot \exp\left(-n^2\right) > 0.
    \]
    By the probabilistic method, this means that we have $\forall \sigma_1, \sigma_2 \in \{-1, 1\}^n, \exists i \big(\sigma_1 \circ \tau_i, \sigma_2 \circ \tau_i \in A_n\big)$ for some choice of $\tau_1, \ldots, \tau_{p(n)} \in \{-1, 1\}^n$. Fix such a choice of $\tau_1, \ldots, \tau_{p(n)} \in \{-1, 1\}^n$. We then have
    \begin{equation}\label{eqn:Z_covered1}
    \sum_{i = 1}^{p(n)} Z^{S,\bis\circ\tau_i}_{\bg,\bg_t} \geq Z^{S}_{\bg,\bg_t}.
    \end{equation}
    This is because every term in the partition function on the right hand side also appears in one of the partition functions on the left hand side. Now for the sake of contradiction assume that $\Pr\left[
    Z^{S,\bis}_{\bg,\bg_t} \geq \frac{Z^{S}_{\bg,\bg_t}}{p(n)}\right] < 1/p(n)$, then since $Z^{S,\bis\circ\tau_i}_{\bg,\bg_t}$ has the same distribution as $Z^{S,\bis}_{\bg,\bg_t}$ for every $i$, we can use a union bound to obtain that $\Pr\left[
    \exists i, Z^{S,\bis}_{\bg,\bg_t} \geq \frac{Z^{S}_{\bg,\bg_t}}{p(n)}\right] < 1$. However, this implies that \eqref{eqn:Z_covered1} is violated with positive probability, which gives us a contradiction. The lemma then follows.
\end{proof}

\begin{theorem}\label{thm:bisection_free_energy}
    Let $S \subseteq [-1, 1]$. For any $\beta \in \mathbb{R}$, we have
    \begin{align*}
    \lim_{n \to \infty} \frac{1}{n}  \E\left[\log Z^{S,\bis}_{\bg,\bg_t} - \log Z^{S}_{\bg,\bg_t}\right] = 0.
    \end{align*}
\end{theorem}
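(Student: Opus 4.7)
The plan is to combine Lemma~\ref{lem:z_cover} with Gaussian concentration for the log-partition functions. Write $X := \log Z^{S}_{\bg,\bg_t}$ and $Y := \log Z^{S,\bis}_{\bg,\bg_t}$. Since $A_n \subseteq \{-1,1\}^n$, deterministically $Y \leq X$, so $\E[Y-X] \leq 0$; the substance is a matching lower bound $\E[X - Y] = o(n)$.

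The first ingredient is Gaussian concentration. Rescaling $\bg_{ij} = z_{ij}/\sqrt{2n}$ and $\bg'_{ij} = z'_{ij}/\sqrt{2n}$ with $z_{ij}, z'_{ij} \sim N(0,1)$, I claim that both $X$ and $Y$ are $O(|\beta|\sqrt n)$-Lipschitz as functions of $(z,z')$. Indeed, each of the $O(n^2)$ partials $\partial X/\partial \bg_{ij}$ and $\partial X/\partial \bg'_{ij}$ is bounded by $O(|\beta|)$ via the usual spin-average identity (the indicator $R_{1,2}\in S$ is locally constant in $\bg,\bg'$), so the $(z,z')$-gradient has squared norm $O(\beta^2 n)$, and the same bound holds for $Y$. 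Standard Gaussian concentration then yields
\begin{equation*}
\Pr\bigl[|X-\E X|>u\bigr],\ \Pr\bigl[|Y-\E Y|>u\bigr] \leq 2\exp\!\left(-\frac{u^2}{C\beta^2 n}\right).
\end{equation*}

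Assuming $n$ is odd, set $u_n := \sqrt{4C\beta^2 n \log p(n)}$ with $p(n)=O(n^3)$ as in Lemma~\ref{lem:z_cover}. Then the event $E_2 := \{|X-\E X|\leq u_n\} \cap \{|Y-\E Y|\leq u_n\}$ has probability at least $1-4/p(n)^2$, while Lemma~\ref{lem:z_cover} supplies an event $E_1$ of probability at least $1/p(n)$ on which $Y \geq X - \log p(n)$. For $n$ large enough that $4/p(n)^2 < 1/p(n)$, we have $E_1\cap E_2 \neq \emptyset$, and any outcome in the intersection gives
\begin{equation*}
\E Y + u_n \geq Y \geq X - \log p(n) \geq \E X - u_n - \log p(n),
\end{equation*}
so $\E X - \E Y \leq 2u_n + \log p(n) = O(\sqrt{n\log n}) = o(n)$, as needed.

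For even $n$, $A_n$ is the set of configurations with $\sum_i \sigma(i)=0$, and I would adapt the probabilistic-method argument of Lemma~\ref{lem:z_cover}: a uniformly random $\tau$ sends both $\sigma_1\circ\tau$ and $\sigma_2\circ\tau$ into $A_n$ provided it exactly balances both $\same(\sigma_1,\sigma_2)$ and its complement, which occurs with probability $\Theta(1/n)$ when both parts have even size; the minor parity obstruction can be absorbed into an $O(n)$ factor in $p(n)$ by a single bit-flip. The rest of the argument is unchanged. The main subtlety I anticipate is really the Lipschitz bookkeeping of Step~1 in the presence of the interpolation $\bg_t = \sqrt{1-t}\bg + \sqrt t\bg'$, which mixes the two Gaussian blocks through the chain rule, but this is routine; once in hand, the covering loss $\log p(n) = O(\log n)$ and the concentration width $u_n = O(\sqrt{n\log n})$ are both $o(n)$, and the theorem follows.
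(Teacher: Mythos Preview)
Your proof is correct and follows essentially the same strategy as the paper: combine Lemma~\ref{lem:z_cover} with Gaussian concentration of the two log-partition functions, then intersect the covering event (probability $\geq 1/p(n)$) with the concentration event to bound the deterministic quantity $\E[X-Y]$ by $o(n)$. The only notable difference is the handling of even $n$: the paper simply observes that passing from $n$ to $n+1$ changes each expected log-partition function by $o(n)$ and reduces to odd $n$, which is cleaner than your sketched parity workaround---note that when $n$ is even and $|\same(\sigma_1,\sigma_2)|$ is odd, no $\tau$ can place both $\sigma_1\circ\tau$ and $\sigma_2\circ\tau$ in $A_n$ simultaneously, so a ``single bit-flip'' does not repair the covering.
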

\begin{proof}
    Note that increasing $n$ by 1 changes $\E\left[\log Z\right]$ by at most $o(n)$ for both $Z = Z^{S,\bis}_{\bg,\bg_t}$ and $Z = Z^{S}_{\bg,\bg_t}$, so we can assume that $n$ is odd without loss of generality. By Lemma~\ref{lem:z_cover}, we have
    \[
    \log Z^{S,\bis}_{\bg,\bg_t} - \log Z^{S}_{\bg,\bg_t} \geq -C \log n
    \]
    for some constant $C > 0$ with probability at least $\Omega(1/n^3)$. We also know that $\log Z^{S,\bis}_{\bg,\bg_t} \leq \log Z^{S}_{\bg,\bg_t}$, so we have
    \[
    \left|\log Z^{S,\bis}_{\bg,\bg_t} - \log Z^{S}_{\bg,\bg_t}\right| \leq C \log n
    \]
    with probability at least $\Omega(1/n^3)$. By the triangle inequality, we have
    \begin{align}
        &\left|\E\left[\log Z^{S,\bis}_{\bg,\bg_t} - \log Z^{S}_{\bg,\bg_t}\right]\right|  \notag \\
        \leq\,& \left|\E\left[\log Z^{S,\bis}_{\bg,\bg_t}\right] - \log Z^{S,\bis}_{\bg,\bg_t}\right| + \left| \log Z^{S,\bis}_{\bg,\bg_t} - \log Z^{S}_{\bg,\bg_t}\right| + \left|\log Z^{S}_{\bg,\bg_t} -\E\left[\log Z^{S}_{\bg,\bg_t}\right]\right| \label{eqn:Z_triangle_inequality}
    \end{align}
    It is well-known that $Z^{S}_{\bg,\bg_t}$ and $Z^{S,\bis}_{\bg,\bg_t}$ concentrate around their means: there exists $c > 0$ such that for any $t > 0$,   \begin{equation}\label{eqn:Z_concentration}
    \Pr\left[\left|\E\left[\log Z\right] - \log Z\right| > t\right] \leq 2 \cdot \exp\left(-\frac{c\cdot t^2}{n}\right),
    \end{equation}
    where $Z = Z^{S}_{\bg,\bg_t}$ or $Z^{S,\bis}_{\bg,\bg_t}$ (see e.g. Proposition 1.3.5 in~\cite{talagrand_mean_2011}). In particular, we can choose $t = n^{3/4}$ and \eqref{eqn:Z_concentration} becomes  \begin{equation}\label{eqn:Z_concentration2}
    \Pr\left[\left|\E\left[\log Z\right] - \log Z\right| > n^{3/4}\right] \leq 2 \cdot \exp\left(-c \sqrt{n}\right).
    \end{equation}
    It follows that with positive probability, we have $\left|\E\left[\log Z\right] - \log Z\right| \leq n^{3/4}$ for $Z = Z^{S}_{\bg,\bg_t}$ and $Z^{S,\bis}_{\bg,\bg_t}$ and also $\left|\log Z^{S,\bis}_{\bg,\bg_t} - \log Z^{S}_{\bg,\bg_t}\right| \leq C \log n$, which implies that the RHS of \eqref{eqn:Z_triangle_inequality} is $o(n)$ with positive probability. However, since the LHS of \eqref{eqn:Z_triangle_inequality} is a constant, the above reasoning means that it's always $o(n)$. Our theorem follows by dividing \eqref{eqn:Z_triangle_inequality} by $n$.
\end{proof}

\begin{corollary}\label{cor:SK_bis_free_energy}
We have
    \begin{align*}
    \lim_{n \to \infty} \frac{1}{n}  \E\left[\log Z^{\bis}(\beta, \bg) - \log Z(\beta, \bg)\right] = 0.
    \end{align*}    
\end{corollary}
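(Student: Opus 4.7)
The plan is to obtain this corollary as an immediate specialization of Theorem~\ref{thm:bisection_free_energy}. Choose the perturbation parameter $t = 0$, so that $\bg_t = \sqrt{1-0}\,\bg + \sqrt{0}\,\bg' = \bg$, and take $S = [-1,1]$, which makes the overlap constraint on the coupled partition functions vacuous. With these choices the exponent in the sum becomes $-\beta H(\sigma_1;\bg) - \beta H(\sigma_2;\bg)$ with no linkage between $\sigma_1$ and $\sigma_2$, so the partition functions factor as
\begin{equation}
Z^{[-1,1]}_{\bg,\bg} \;=\; Z(\beta,\bg)^2, \qquad Z^{[-1,1],\bis}_{\bg,\bg} \;=\; Z^{\bis}(\beta,\bg)^2.
\end{equation}
Taking logarithms yields $\log Z^{[-1,1],\bis}_{\bg,\bg} - \log Z^{[-1,1]}_{\bg,\bg} = 2\bigl(\log Z^{\bis}(\beta,\bg) - \log Z(\beta,\bg)\bigr)$. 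Plugging this identity into Theorem~\ref{thm:bisection_free_energy} and dividing by $2$ gives exactly the statement of the corollary.

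The only subtle point is to verify that Theorem~\ref{thm:bisection_free_energy} is applicable at $t = 0$. Inspecting its proof, Lemma~\ref{lem:z_cover} constructs shifts $\tau_1,\dots,\tau_{p(n)}$ by a probabilistic-method argument that depends only on the combinatorics of the sets $\same(\sigma_1,\sigma_2)$ and on the equidistribution property $Z^{S,\bis\circ\tau_i}_{\bg,\bg_t} \stackrel{d}{=} Z^{S,\bis}_{\bg,\bg_t}$; both remain valid when $\bg_t = \bg$ (the identity $\sigma\circ\sigma = \bone$ still implies $H(\sigma\circ\tau;\bg)$ has the correct distribution under the shift by $\tau$). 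The concentration step uses the Gaussian concentration inequality for $\log Z$ from Talagrand, Proposition 1.3.5, which holds uniformly in $t$ because each entry of $\bg_t$ has variance $1/(2n)$ regardless of $t$. Hence the theorem applies verbatim at $t = 0$ and the specialization is legitimate, so there is no real obstacle — the entire content of the corollary is already contained in Theorem~\ref{thm:bisection_free_energy}.
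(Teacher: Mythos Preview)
Your proposal is correct and takes essentially the same approach as the paper: both deduce the corollary by specializing Theorem~\ref{thm:bisection_free_energy} with $S=[-1,1]$, the only difference being that the paper takes $t=1$ (so the coupled partition function factors as $Z(\beta,\bg)\,Z(\beta,\bg')$, with the same expected log) whereas you take $t=0$ (giving the exact identity $Z(\beta,\bg)^2$). Your verification that the ingredients of the theorem remain valid at $t=0$ is fine, though strictly unnecessary since the theorem is stated for all~$t$.
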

\begin{proof}
    Note that when $t = 1$ and $S = [-1, 1]$, we have $Z^S_{\bg,\bg_t} = Z(\beta, \bg)^2$ and $Z^{S,\bis}_{\bg,\bg_t} = Z^{\bis}(\beta, \bg)^2$. The corollary then follows directly from Theorem~\ref{thm:bisection_free_energy}.
\end{proof}

\subsection{Sparse bisection model}

We need the following folklore concentration result about Poisson random variables (see e.g.~\cite{CanonnePoisson} for a proof). 

\begin{proposition}\label{prop:poisson_concentration}
    Let $X \sim \Po(\lambda)$. For any $t > 0$, we have
    \begin{equation}\label{eq:poisson_concentration_+}
    \Pr[X \geq \lambda + t] \leq \exp\left(-\frac{t^2}{2(\lambda + t)}\right),
    \end{equation}
    and
    \begin{equation}\label{eq:poisson_concentration_-}
    \Pr[X \leq \lambda - t] \leq \exp\left(-\frac{t^2}{2\lambda}\right).
    \end{equation}
\end{proposition}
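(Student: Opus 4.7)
The plan is to apply the classical Chernoff / exponential-moment method. The key ingredient is the moment generating function of $X \sim \Po(\lambda)$, which satisfies
\begin{equation}
\E[e^{sX}] = e^{-\lambda}\sum_{k \geq 0}\frac{(\lambda e^s)^k}{k!} = \exp(\lambda(e^s - 1))
\end{equation}
for every $s \in \mathbb{R}$, by a direct computation from the Poisson probability mass function.

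For the upper tail \eqref{eq:poisson_concentration_+}, I would fix $s > 0$ and apply Markov's inequality to $e^{sX}$ to get $\Pr[X \geq \lambda + t] \leq e^{-s(\lambda+t)}\exp(\lambda(e^s - 1))$. The right hand side is minimized at $s = \log(1 + t/\lambda)$, yielding
\begin{equation}
\Pr[X \geq \lambda + t] \leq \exp(-\lambda\, h(t/\lambda)), \qquad h(x) := (1+x)\log(1+x) - x.
\end{equation}
To recover the exponent stated in the proposition, I would then establish the elementary inequality $h(x) \geq x^2/(2(1+x))$ for $x \geq 0$. This can be done by noting that both sides and their first derivatives vanish at $x = 0$, and that $h''(x) = 1/(1+x)$ dominates the second derivative $1/(1+x)^3$ of $x^2/(2(1+x))$ on $x \geq 0$; two integrations then give the inequality. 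Substituting $x = t/\lambda$ rearranges to $\lambda\, h(t/\lambda) \geq t^2/(2(\lambda+t))$, the claimed exponent.

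For the lower tail \eqref{eq:poisson_concentration_-}, the case $t > \lambda$ is trivial since $X \geq 0$ forces $\Pr[X \leq \lambda - t] = 0$. For $0 < t \leq \lambda$, the analogous Chernoff bound applied to $e^{-sX}$ with $s > 0$, optimized at $s = -\log(1 - t/\lambda)$, gives $\Pr[X \leq \lambda - t] \leq \exp(-\lambda\, g(t/\lambda))$ where $g(y) := (1-y)\log(1-y) + y$. It then suffices to show $g(y) \geq y^2/2$ on $[0,1)$, which follows at once from $g(0) = 0$ and $g'(y) = -\log(1-y) = \sum_{k \geq 1} y^k/k \geq y$, so that integrating dominates $y^2/2$.

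The main obstacle is only mechanical: verifying the two elementary calculus inequalities $h(x) \geq x^2/(2(1+x))$ and $g(y) \geq y^2/2$, and respecting the asymmetry of the Poisson tails, namely that the lower-tail bound is genuinely sharper (with $\lambda$ rather than $\lambda + t$ in the denominator) and needs the separate trivial observation for $t > \lambda$. Neither inequality is deep, so I expect the entire argument to be essentially a page of bookkeeping built around the Poisson MGF.
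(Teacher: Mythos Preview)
Your argument is correct: the Chernoff bound via the Poisson moment generating function, followed by the two calculus inequalities $h(x) \geq x^2/(2(1+x))$ and $g(y) \geq y^2/2$, is exactly the standard route to these tail bounds, and all the steps you outline go through as stated. The paper itself does not give a proof of this proposition; it quotes it as a folklore concentration result and points to a reference, so there is no ``paper proof'' to compare against. Your write-up is precisely the kind of derivation one finds in such references.
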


Given $\sigma \in \{-1, 1\}^n$, $\tau \in A_n$, we say that $\tau$ is a \emph{nearest bisection} for $\sigma$ if $R_{1,2}(\tau, \sigma) = \max_{\tau' \in A_n} R_{1,2}(\tau', \sigma)$. Intuitively, $\tau$ is a \emph{nearest bisection} for $\sigma$ if it can be obtained by flipping the fewest number of bits in $\sigma$.

\begin{lemma}\label{lem:bisection_prob_bound}
    For any $\epsilon > 0$, there exists $\delta > 0$ such that if $d$ is sufficiently large, $\sigma \in \{-1, 1\}^n$, $\tau \in A_n$, and $\tau$ is a nearest bisection for $\sigma$ then
    \begin{equation}\label{eq:bisection_prob_bound}
    \Pr\left[H_d(\tau) - H_d(\sigma) \geq \epsilon \cdot \sqrt{d} \cdot n\right] \leq 2 \cdot \exp\left(-\delta d^{1/4} n\right).
    \end{equation}
\end{lemma}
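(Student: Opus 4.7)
The plan is to reduce the lemma to two applications of Poisson concentration. Writing $m := \sum_i \sigma(i)$ and assuming $m \geq 0$ without loss of generality (the case $m < 0$ is symmetric under a global spin flip, and $m = 0$ gives $\tau = \sigma$ trivially), the fact that $\tau$ is a nearest bisection forces the flip set $F := \{i : \tau(i) \neq \sigma(i)\}$ to lie entirely inside the $+1$-class of $\sigma$ and to have size $k := \lfloor m/2 \rfloor$. Expanding $\tau(i)\tau(j) - \sigma(i)\sigma(j)$ term by term, only ordered pairs $(i,j)$ with exactly one endpoint in $F$ contribute to $H_d(\tau) - H_d(\sigma)$, and each such pair contributes $-2\sigma(i)\sigma(j)$. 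Hence
\[
H_d(\tau) - H_d(\sigma) \;=\; -2(X_+ - X_-),
\]
where $X_\pm$ sums the $\bA_{ij}$ over those ordered pairs with exactly one endpoint in $F$ and $\sigma(i)\sigma(j) = \pm 1$. These are independent Poissons with means $\lambda_+ = k(p-k)d/n$ and $\lambda_- = kqd/n$, where $p = |\{i : \sigma(i) = +1\}|$ and $q = n-p$; in particular $\lambda_+ - \lambda_- = m^2 d/(4n) + O(d/n)$ and $\lambda_+ + \lambda_- = k(n-k)d/n \leq md/2$.

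The target event is then $\{X_- - X_+ \geq \epsilon\sqrt{d}\,n/2\}$, a deviation of $X_- - X_+$ above its mean by
\[
T \;:=\; \epsilon\sqrt{d}\,n/2 + (\lambda_+ - \lambda_-) \;\geq\; \max\bigl(\epsilon\sqrt{d}\,n/2,\ m^2 d/(4n)\bigr) - O(d/n).
\]
I would split $T = T/2 + T/2$ and apply Proposition~\ref{prop:poisson_concentration} separately to $\Pr[X_- \geq \lambda_- + T/2]$ and $\Pr[X_+ \leq \lambda_+ - T/2]$, using the trivial bound $\lambda_\pm \leq \lambda_+ + \lambda_-$ in the denominators. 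A union bound combining the two tails produces an estimate of the form
\[
\Pr\bigl[H_d(\tau) - H_d(\sigma) \geq \epsilon\sqrt{d}\,n\bigr] \;\leq\; 2\exp\!\bigl(-c\cdot\min\bigl(T,\ T^2/(\lambda_+ + \lambda_-)\bigr)\bigr),
\]
for an absolute constant $c > 0$; the factor $2$ in the statement comes from this union bound.

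To conclude, I need to verify that both $T$ and $T^2/(\lambda_+ + \lambda_-)$ are at least a constant multiple of $d^{1/4} n$ uniformly in $m$. The first is immediate: $T \geq \epsilon\sqrt{d}\,n/2$ dominates any fixed multiple of $d^{1/4} n$ for $d$ large. For the second, I would use two complementary lower bounds on $T^2$. Squaring $T \geq \epsilon\sqrt{d}\,n/2$ yields $T^2/(\lambda_+ + \lambda_-) \geq \epsilon^2 n^2/(2m)$, which is large for small $m$; meanwhile the elementary inequality $(x+y)^2 \geq 4xy$ applied to $T$ gives $T^2 \geq \epsilon m^2 d^{3/2}/2$, whence $T^2/(\lambda_+ + \lambda_-) \geq \epsilon m\sqrt{d}$, which is large for large $m$. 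Taking the maximum of these two bounds and minimizing over $m$, the worst case occurs at $m^\ast \approx \sqrt{\epsilon/2}\,n/d^{1/4}$, at which both expressions equal a constant multiple of $\epsilon^{3/2} n d^{1/4}$. Choosing $\delta$ a small enough multiple of $\epsilon^{3/2}$ then gives the conclusion.

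The main obstacle, and the reason the exponent is $d^{1/4} n$ rather than $\sqrt{d}\,n$, is the need to interpolate between two genuinely different concentration regimes uniformly in $m$. For small $m \ll n/d^{1/4}$ the mean shift $\lambda_+ - \lambda_-$ of $X_- - X_+$ is negligible compared to the target deviation, and concentration must come from the Bernstein denominator $\lambda_+ + \lambda_- \leq md/2$ being small; for large $m \gg n/d^{1/4}$ the mean shift is large and dominates $T$, but the variance also grows linearly in $m$, so one relies on the quadratic growth of $T$ to overcome it. The balance point $m \asymp n/d^{1/4}$ is precisely the scale of the set $S(n, \epsilon/d^{1/4})$ used in the companion Proposition~\ref{prop:magnetization_zero}, which is exactly where this lemma is applied.
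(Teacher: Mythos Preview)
Your proof is correct and follows essentially the same approach as the paper: both write $H_d(\tau)-H_d(\sigma)$ as (twice) the difference of two independent Poisson variables indexed by the flip set and apply the Poisson tail bounds of Proposition~\ref{prop:poisson_concentration} to each. The only cosmetic difference is that the paper performs an explicit three-regime case split on $\Delta=|F|$ (the thresholds being $\epsilon n/(2\sqrt d)$ and $\sqrt{2\epsilon}\,n/d^{1/4}$), whereas you package the same trade-off as the optimization $\max\bigl(\epsilon^2 n^2/(2m),\ \epsilon m\sqrt d\bigr)$ over $m$ and locate the minimizer at $m^\ast\asymp n/d^{1/4}$; both routes yield the same $\epsilon^{3/2}$-dependence and $d^{1/4}n$ exponent.
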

\begin{proof}
    Let $S = \{i \in [n] : \sigma(i) \neq \tau(i)\}$, $T^+_\tau = \{i \in [n] : \tau(i) = 1\}$ and $T^-_\tau = \{i \in [n] : \tau(i) = -1\}$. For simplicity, let us assume that $n$ is even so $|T^+_\tau| = |T^-_\tau| = n/2$. Let $\Delta = |S| \in [0, n/2]$.
    
    Since $\tau$ is a nearest bisection for $\sigma$, we know that $\tau(i)$ has the same sign for every $i \in S$. Without loss of generality, assume that $\tau(i) = 1$ and $\sigma(i) = -1$ for every $i \in S$ (if not, we can negate both $\sigma$ and $\tau$). We have
    \begin{align*}
        H_d(\tau) - H_d(\sigma) & = \sum_{i, j = 1}^n A_{ij}\left(\tau(i)\tau(j) - \sigma(i)\sigma(j)\right) \\
        & = 2 \sum_{i \in S, j \notin S} A_{ij}\left(\tau(i)\tau(j) - \sigma(i)\sigma(j)\right) \\
        & = 4 \left(\sum_{i \in S, j \in \T^+_\tau \backslash S} A_{ij} - \sum_{i \in S, j \in \T^-_\tau} A_{ij} \right) \numberthis \label{eq:H_tau-sigma}.
    \end{align*}
    Let $U = \sum_{i \in S, j \in \T^+_\tau \backslash S} A_{ij}$ and $V =\sum_{i \in S, j \in \T^-_\tau} A_{ij}$. Since the sum of finitely many independent Poisson variables is still Poisson, we have $U \sim \Po\left(\frac{d\Delta}{4} - \frac{d\Delta^2}{2n}\right)$ and $V \sim \Po\left(\frac{d\Delta}{4}\right)$. If
    \[
    U < \frac{d\Delta}{4} - \frac{d\Delta^2}{2n} + \frac{d\Delta^2}{4n} + \frac{\epsilon \sqrt{d} n}{8} \qquad \text{and} \qquad V > \frac{d\Delta}{4} - \frac{d\Delta^2}{4n} - \frac{\epsilon \sqrt{d} n}{8},
    \]
    then
    \[
    H_d(\tau) - H_d(\sigma) = 4(U - V) < \epsilon \sqrt{d} n,
    \]
    so it follows that 
    \begin{align*}
        \Pr\left[H_d(\tau) - H_d(\sigma) \geq \epsilon \cdot \sqrt{d} \cdot n\right] \leq \Pr\left[U \geq \frac{d\Delta}{4} - \frac{d\Delta^2}{2n} + \frac{d\Delta^2}{4n} + \frac{\epsilon \sqrt{d} n}{8}\right] + \Pr\left[V \leq \frac{d\Delta}{4} - \frac{d\Delta^2}{4n} - \frac{\epsilon \sqrt{d} n}{8}\right].
    \end{align*}
    It remains to bound the two probabilities on the right hand side of the inequality above. By \eqref{eq:poisson_concentration_+},
    \begin{align*}
        \Pr\left[U \geq \frac{d\Delta}{4} - \frac{d\Delta^2}{2n} + \frac{d\Delta^2}{4n} + \frac{\epsilon \sqrt{d} n}{8}\right] & \leq \exp\left(-\frac{\left(\frac{d\Delta^2}{4n} + \frac{\epsilon \sqrt{d} n}{8}\right)^2}{2\left(\frac{d\Delta}{4} - \frac{d\Delta^2}{2n} + \frac{d\Delta^2}{4n} + \frac{\epsilon \sqrt{d} n}{8}\right)}\right) \\
        & \leq \exp\left(-\frac{\left(\frac{d\Delta^2}{4n} + \frac{\epsilon \sqrt{d} n}{8}\right)^2}{2\left(\frac{d\Delta}{4}+\frac{\epsilon \sqrt{d} n}{8}\right)}\right).       
    \end{align*}
    We have three cases.
    \begin{itemize}
        \item $\Delta \leq \frac{\epsilon n}{2\sqrt{d}}$. In this case we have
        \[
        \exp\left(-\frac{\left(\frac{d\Delta^2}{4n} + \frac{\epsilon \sqrt{d} n}{8}\right)^2}{2\left(\frac{d\Delta}{4}+\frac{\epsilon \sqrt{d} n}{8}\right)}\right) \leq \exp\left(-\frac{\left(\frac{\epsilon \sqrt{d} n}{8}\right)^2}{2\left(\frac{\epsilon \sqrt{d} n}{8}+\frac{\epsilon \sqrt{d} n}{8}\right)}\right) = \exp\left(-\frac{\epsilon \sqrt{d}n}{16}\right).
        \]
        \item $\frac{\epsilon n}{2\sqrt{d}} \leq \Delta \leq \sqrt{2\epsilon} \cdot d^{-1/4} n$. In this case we have
        \[
        \exp\left(-\frac{\left(\frac{d\Delta^2}{4n} + \frac{\epsilon \sqrt{d} n}{8}\right)^2}{2\left(\frac{d\Delta}{4}+\frac{\epsilon \sqrt{d} n}{8}\right)}\right) \leq \exp\left(-\frac{\left(\frac{\epsilon \sqrt{d} n}{8}\right)^2}{2\left(\frac{d\Delta}{4}+\frac{d\Delta}{4}\right)}\right) = \exp\left(-\frac{\epsilon^2 n^2}{64\Delta}\right) \leq \exp\left(-\frac{\epsilon^2}{64\sqrt{2\epsilon}}\cdot d^{1/4}n\right).
        \]
        \item $\Delta \geq \sqrt{2\epsilon} \cdot d^{-1/4} n$. In this case we have
        \[
        \exp\left(-\frac{\left(\frac{d\Delta^2}{4n} + \frac{\epsilon \sqrt{d} n}{8}\right)^2}{2\left(\frac{d\Delta}{4}+\frac{\epsilon \sqrt{d} n}{8}\right)}\right) \leq \exp\left(-\frac{\left(\frac{d\Delta^2}{4n}\right)^2}{2\left(\frac{d\Delta}{4}+\frac{d\Delta}{4}\right)}\right) = \exp\left(-\frac{d\Delta^3}{16n^2}\right) \leq  \exp\left(-\frac{(2\epsilon)^{3/2}}{16}\cdot d^{1/4} n\right).
        \]
    \end{itemize}
    It follows that there exists $\delta_1 = \delta_1(\epsilon) > 0$ such that 
    \[
    \Pr\left[U \geq \frac{d\Delta}{4} - \frac{d\Delta^2}{2n} + \frac{d\Delta^2}{4n} + \frac{\epsilon \sqrt{d} n}{8}\right] \leq \exp\left(-\frac{\left(\frac{d\Delta^2}{4n} + \frac{\epsilon \sqrt{d} n}{8}\right)^2}{2\left(\frac{d\Delta}{4}+\frac{\epsilon \sqrt{d} n}{8}\right)}\right) \leq \exp\left(-\delta_1 \cdot d^{1/4} n\right).
    \]
    Using a similar analysis (with \eqref{eq:poisson_concentration_-} this time) we can show that
    \[
    \Pr\left[V \leq \frac{d\Delta}{4} - \frac{d\Delta^2}{4n} - \frac{\epsilon \sqrt{d} n}{8}\right] \leq \exp\left(-\delta_2 \cdot d^{1/4} n\right)
    \]
    for some $\delta_2 = \delta_2(\epsilon) > 0$. This completes the proof.
\end{proof}

\begin{lemma}\label{lemma:sigma_to_tau_pairs}
    Assume that $n$ is odd. There exists a function $f: \{-1, 1\}^n \times \{-1, 1\}^n \to A_n \times A_n$ satisfying the following properties:
    \begin{itemize}
        \item For every $\sigma_1, \sigma_2 \in \{-1, 1\}^n$, if $f(\sigma_1, \sigma_2) = (\tau_1, \tau_2)$, then $\tau_i$ is a nearest bisection for $\sigma_i$ for $i = 1, 2$, and furthermore $R_{1,2}(\sigma_1, \sigma_2) = R_{1,2}(\tau_1, \tau_2)$.
        \item There exists a constant $C > 0$ independent of $n$ such that for every $\tau_1, \tau_2 \in A_n$, $|f^{-1}(\tau_1, \tau_2)| \leq C n^{3/2}$. 
    \end{itemize}
\end{lemma}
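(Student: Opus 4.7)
The plan is to define $f$ by flipping the minimum number of bits in each $\sigma_i$ to reach $A_n$, with the two flip sets chosen in a coordinated way that preserves the overlap.  By negating $\sigma_1$ and/or $\sigma_2$ and keeping track of signs (four symmetric cases), I may assume both configurations have positive magnetization, so $\sigma_i$ needs $k_i := (|\sum_j \sigma_i(j)| - 1)/2$ of its positive coordinates flipped.  Partition $[n]$ into the four cells $E_{ab} = \{i : \sigma_1(i) = a,\, \sigma_2(i) = b\}$, and write $F_1 \subseteq E_{++} \cup E_{+-}$ and $F_2 \subseteq E_{++} \cup E_{-+}$ for the flip sets, with $|F_i| = k_i$.

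A direct computation, splitting on $F_1 \triangle F_2$ and using that $F_1 \cap F_2 \subseteq E_{++}$, gives
\[
n\bigl[R_{1,2}(\tau_1,\tau_2) - R_{1,2}(\sigma_1,\sigma_2)\bigr] = -2\bigl(|F_1 \cap E_{++}| + |F_2 \cap E_{++}| - 2|F_1 \cap F_2| - |F_1 \cap E_{+-}| - |F_2 \cap E_{-+}|\bigr).
\]
Equating this to zero and imposing $|F_i| = k_i$ yields a single integer linear relation that admits solutions precisely when $k_1 + k_2$ is even.  I would then define $f$ by choosing a canonical feasible $(F_1, F_2)$, e.g.\ maximize $|F_1 \cap F_2|$ and break residual ties by taking the smallest indices within each cell.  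When $k_1 + k_2$ is odd, which my small-$n$ checks show is a genuine obstruction to exact overlap preservation with nearest bisections, I would augment one of the $F_i$ by a single extra position chosen canonically, yielding a $\tau_i$ at Hamming distance $k_i + 1$ (hence ``essentially'' rather than strictly nearest); this $O(1)$ slack should be the intended reading of the lemma and is absorbable in the downstream partition-function analysis.

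For the preimage bound, fix $(\tau_1,\tau_2) \in A_n \times A_n$ and recover any $(\sigma_1,\sigma_2) \in f^{-1}(\tau_1,\tau_2)$.  Because $f$ preserves the overlap, the cell sizes of $(\sigma_1,\sigma_2)$ are pinned down by the four equations $\sum_{ab} n_{ab} = n$, $M(\sigma_i) = M(\tau_i) + 2 k_i$ for $i = 1,2$, and $n R_{1,2}(\sigma_1,\sigma_2) = n R_{1,2}(\tau_1,\tau_2)$, giving the explicit formula $n_{ab} = (n + \epsilon_a \epsilon_b \, n R + \epsilon_a M_1 + \epsilon_b M_2)/4$ with $\epsilon_\pm = \pm 1$.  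Once $(k_1,k_2)$ is fixed, the canonical rule inside $f$ then determines the exact flipped positions, so $(\sigma_1,\sigma_2)$ is uniquely identified by the pair $(k_1,k_2)$.  The preimage count therefore reduces to counting admissible $(k_1,k_2)$.

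The main obstacle is tightening the $(k_1,k_2)$ count from the naive $O(n^2)$ bound to $O(n^{3/2})$.  The four inequalities $n_{ab} \geq 0$ cut $(k_1,k_2)$ to an affine parallelogram of area $\Theta(n^2(1 - R_{1,2}^2))$, which is still too large.  I expect the missing $\sqrt{n}$ factor to come from a \emph{consistency} condition: not every $(k_1,k_2)$ in the feasible box corresponds to an $(\sigma_1,\sigma_2)$ for which the canonical rule actually chooses exactly $(\tau_1,\tau_2)$, and a Stirling-type count on the ``winning'' positions in the canonical tiebreak should restrict the attained $(k_1,k_2)$ to an $O(\sqrt{n})$-thick strip around the ridge where the magnetization is balanced with the cell structure of $(\tau_1,\tau_2)$.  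Carrying out this consistency check uniformly, together with the parity adjustment above, is the technical crux.
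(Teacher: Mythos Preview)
Your approach is quite different from the paper's. Rather than constructing $f$ explicitly, the paper uses the probabilistic method: for each $(\sigma_1,\sigma_2)$ it sets $f(\sigma_1,\sigma_2)$ to be a uniformly random element of the set $N(\sigma_1,\sigma_2)$ of overlap-preserving nearest-bisection pairs. By symmetry, $\E[|f^{-1}(\tau_1,\tau_2)|]$ depends only on the overlap $r=R_{1,2}(\tau_1,\tau_2)$ and equals the ratio of the number of all pairs to the number of bisection pairs at overlap $r$; an elementary binomial estimate places this ratio in $[c_1\sqrt{n},\,c_2 n]$. A Chernoff bound with multiplicative deviation $\delta\asymp\sqrt{n}$ then beats the $4^n$ union bound over $(\tau_1,\tau_2)$, giving some realization of $f$ with $|f^{-1}(\tau_1,\tau_2)|\leq Cn^{3/2}$ throughout. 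No structural analysis of flip sets is needed. (Incidentally, your parity observation is correct and in fact sharper than the paper, which asserts $N(\sigma_1,\sigma_2)\neq\emptyset$ for all odd $n$ without proof; the case $n=3$, $\sigma_1=(+,+,+)$, $\sigma_2=(+,+,-)$ already violates this. Your one-extra-flip patch is reasonable for the downstream application but does not prove the lemma as literally stated.)

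The genuine gap in your proposal is the preimage bound. First, the claim that $(\tau_1,\tau_2,k_1,k_2)$ determines $(\sigma_1,\sigma_2)$ is not justified: your canonical rule selects $(F_1,F_2)$ by reference to the cells $E_{ab}(\sigma_1,\sigma_2)$, which cannot be reconstructed from $(\tau_1,\tau_2)$ without already knowing $(F_1,F_2)$; and even for the simpler rule ``flip the smallest plus-indices,'' uniqueness fails (for $\tau_1=(-,-,+,-,+)$ and $k_1=1$, both $F_1=\{1\}$ and $F_1=\{2\}$ are consistent). Second, even granting uniqueness, the hoped-for ``consistency'' restriction of $(k_1,k_2)$ to an $O(\sqrt{n})$ strip is pure speculation; a deterministic tiebreak rule destroys the symmetry that any averaging argument would need, and nothing you have written prevents certain $(\tau_1,\tau_2)$ from absorbing $\Theta(n^2)$ preimages. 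The $n^{3/2}$ in the paper does not come from any geometric constraint on $(k_1,k_2)$; it is the product of an $O(n)$ expected preimage size and the $O(\sqrt{n})$ Chernoff blow-up needed to survive an exponential union bound. That randomized-spreading idea is what your argument is missing.
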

\begin{proof}
    For every $\sigma_1, \sigma_2 \in \{-1, 1\}^n$, let $N(\sigma_1, \sigma_2) \subseteq A_n \times A_n$ be the set of pairs $(\tau_1, \tau_2)$ where $\tau_i$ is a nearest bisection for $\sigma_i$ for $i = 1, 2$ and $R_{1,2}(\sigma_1, \sigma_2) = R_{1,2}(\tau_1, \tau_2)$. It can be verified that when $n$ is odd, the set $N(\sigma_1, \sigma_2)$ is always non-empty. Consider a random function $f$ where for every pair $\sigma_1, \sigma_2 \in \{-1, 1\}^n$, we independently pick an element from $N(\sigma_1, \sigma_2)$ uniformly at random as $f(\sigma_1, \sigma_2)$. 
    
    For any pair $(\tau_1, \tau_2)$, let $X_{\tau_1, \tau_2, \sigma_1, \sigma_2}$ be the indicator random variable for the event $f(\sigma_1, \sigma_2) = (\tau_1, \tau_2)$ and let $X_{\tau_1, \tau_2} = \sum_{\sigma_1, \sigma_2 \in \{-1, 1\}^n} X_{\tau_1, \tau_2, \sigma_1, \sigma_2}$. We have $X_{\tau_1, \tau_2} = |f^{-1}(\tau_1, \tau_2)|$, and for any value $r \in [-1, 1]$, we have
    \begin{equation}
        \sum_{\substack{\tau_1, \tau_2 \in A_n \\ R_{1,2}(\tau_1, \tau_2) = r}} X_{\tau_1, \tau_2} = \big|\left\{(\sigma_1, \sigma_2) \in \{-1, 1\}^n \times \{-1, 1\}^n \mid R_{1,2}(\sigma_1, \sigma_2) = r\right\}\big|
    \end{equation} 
    
    It follows from symmetry that for any $\tau_1, \tau_2 \in A_n$ with $R_{1,2}(\tau_1, \tau_2) = r$ we have
    \begin{equation}
    \E[X_{\tau_1, \tau_2}] = \frac{\big|\left\{(\sigma_1, \sigma_2) \in \{-1, 1\}^n \times \{-1, 1\}^n \mid R_{1,2}(\sigma_1, \sigma_2) = r\right\}\big|}{\big|\left\{(\tau_1, \tau_2) \in A_n \times A_n \mid R_{1,2}(\tau_1, \tau_2) = r\right\}\big|} = \frac{2^n\cdot \binom{n}{\frac{(1-r)n}{2}}}{|A_n|\cdot \binom{\left\lfloor \frac{n}{2} \right\rfloor}{\left\lfloor\frac{(1-r)n}{4}\right\rfloor}\binom{\left\lceil \frac{n}{2} \right\rceil}{\left\lceil\frac{(1-r)n}{4}\right\rceil}}.
    \end{equation}
     By elementary estimates, there exist constants $c_1, c_2 > 0$ such that $c_1 \cdot \sqrt{n} \leq \E[X_{\tau_1, \tau_2}] \leq c_2 \cdot n$, for every $\tau_1, \tau_2 \in A_n$.
Since $X_{\tau_1, \tau_2}$ is a sum of independent $\{0, 1\}$-valued random variables, we can apply a Chernoff bound and obtain that for every $\delta > 0$
\[
\Pr\left[X_{\tau_1, \tau_2} \geq (1 + \delta)\E[X_\tau]\right] \leq \exp\left(-\frac{\delta^2\E[X_\tau]}{\delta + 2}\right) \leq \exp\left(-\frac{\delta^2\cdot c_1\sqrt{n}}{\delta + 2}\right).
\]
By a union bound,
\[
\Pr\left[\exists \tau_1,\tau_2 \in A_n, X_{\tau_1, \tau_2} \geq (1 + \delta)\E[X_{\tau_1, \tau_2}]\right] \leq 4^n\cdot\exp\left(-\frac{\delta^2\cdot c_1\sqrt{n}}{\delta + 2}\right).
\]
We can then pick some sufficiently large constant $c > 0$ and set $\delta = c\sqrt{n}$ such that
\[
4^n\cdot\exp\left(-\frac{\delta^2\cdot c_1\sqrt{n}}{\delta + 2}\right) \leq 4^n\cdot\exp\left(-\frac{\delta^2\cdot c_1\sqrt{n}}{2\delta}\right) = 4^n\cdot\exp\left(-\frac{c \cdot c_1}{2}\cdot n\right) < 1.
\]
It follows that there exists some choice of $f$ such that for all $\tau_1,\tau_2 \in A_n$
\[
X_{\tau_1,\tau_2} < (1 + \delta)\E[X_{\tau_1,\tau_2}] \leq (1 + c\sqrt{n}) \cdot c_2 n \leq Cn^{3/2},
\]
where we pick $C = 2\cdot c \cdot c_2$.
\end{proof}

Recall that earlier we defined the following shorthand notation for coupled systems:
\begin{align}
Z^{S}_{\bA,\bA_t} & = \sum_{\substack{\sigma_1, \sigma_2 \in \{-1, 1\}^n\\ R_{1,2}(\sigma_1, \sigma_2) \in S}} \exp\left(-\frac{\beta}{\sqrt{d}}\cdot(H(\sigma_1; \bA) + H(\sigma_2; \bA_t))\right),\\
Z^{S,\bis}_{\bA,\bA_t} & = \sum_{\substack{\sigma_1, \sigma_2 \in A_n\\ R_{1,2}(\sigma_1, \sigma_2) \in S}} \exp\left(-\frac{\beta}{\sqrt{d}}\cdot(H(\sigma_1; \bA) + H(\sigma_2; \bA_t))\right).
\end{align}
Here $\bA = \bA^{(1-t)} + \bA^{(t, 1)}$ and $\bA_t = \bA^{(1-t)} + \bA^{(t, 2)}$, where $\bA^{(1-t)}, \bA^{(t, 1)}, \bA^{(t, 2)}$ are three independent random matrices such that $\bA^{(1-t)}_{ij} \sim \Po((1-t)d/(2n))$, $\bA^{(t, 1)}_{ij}, \bA^{(t, 2)}_{ij} \sim \Po(td/(2n))$ independently for all $i,j$. In the following proofs, we will sometimes also use $Z^{S}(\bA,\bA_t)$ for $Z^{S}_{\bA,\bA_t}$ and $Z^{S,\bis}(\bA,\bA_t)$ for $Z^{S,\bis}_{\bA,\bA_t}$.

\begin{lemma}\label{lemma:logZ_sparse_concentration_coupled}
    Fix $\beta, d > 0$ and $t \in [0, 1]$. For every $\delta \in (0, 1/2)$, there exists a constant $C = C(\beta, d)$ such that for all sufficiently large $n$
    \begin{equation}\label{eq:logZ_sparse_concentration_coupled}
    \Pr\left[\left|\log Z^{S}_{\bA,\bA_t} - \E[\log Z^{S}_{\bA,\bA_t}]\right| \geq n^{1/2 + \delta}\right] \leq \exp\left(-C\cdot n^{2\delta}\right)
    \end{equation}
    and
    \begin{equation}\label{eq:logZ_bis_sparse_concentration_coupled}
    \Pr\left[\left|\log Z^{S,\bis}_{\bA,\bA_t} - \E[\log Z^{S,\bis}_{\bA,\bA_t}]\right| \geq n^{1/2 + \delta}\right] \leq \exp\left(-C\cdot n^{2\delta}\right).
    \end{equation}
\end{lemma}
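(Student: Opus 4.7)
The plan is to exhibit $\log Z^{S}_{\bA,\bA_t}$ as a Lipschitz function of the underlying independent Poisson entries and then derive Bernstein-type concentration via the standard Poissonization reduction to McDiarmid's inequality. The fundamental randomness consists of the $3n^2$ mutually independent Poisson variables $\{\bA^{(1-t)}_{ij},\bA^{(t,1)}_{ij},\bA^{(t,2)}_{ij}\}_{i,j\in[n]}$, each with mean at most $d/(2n)$. Applying \eqref{eq:Z_lipschitz} separately to each Hamiltonian $H(\sigma_1;\bA)$ and $H(\sigma_2;\bA_t)$ appearing in every term of $Z^{S}_{\bA,\bA_t}$, I see that incrementing any one such entry by $1$ changes $\log Z^{S}_{\bA,\bA_t}$ by at most $L:=2\beta/\sqrt{d}$ in absolute value (the factor of $2$ accounts for the shared entries $\bA^{(1-t)}_{ij}$, which perturb both $\bA$ and $\bA_t$ simultaneously).

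Since Poisson variables are unbounded, McDiarmid cannot be applied directly; instead I Poissonize. Let $N$ be the total mass across all three matrices, so $N\sim\Po((1+2t)dn/2)$; conditional on $N=k$ the entries are jointly Multinomial$(k,\mathbf p)$ with $\mathbf p$ uniform over the $3n^2$ cells, equivalently $k$ i.i.d.\ ``point positions'' $P_1,\ldots,P_k\in[3n^2]$. Viewing $\log Z^{S}_{\bA,\bA_t}$ as a function $F(P_1,\ldots,P_k)$, resampling a single $P_i$ corresponds to deleting one point and inserting another, each step costing at most $L$; hence McDiarmid yields, for $s:=n^{1/2+\delta}/3$,
\begin{equation*}
\Pr\!\left[\,\left|\log Z^{S}_{\bA,\bA_t}-g(N)\right|\geq s \,\big|\, N=k\,\right] \;\leq\; 2\exp\!\left(-\frac{s^{2}}{2k L^{2}}\right),
\end{equation*}
where $g(k):=\E[\log Z^{S}_{\bA,\bA_t}\mid N=k]$. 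On the event $\{N\leq 2dn\}$, which fails with probability at most $\exp(-\Omega(n))$ by Proposition~\ref{prop:poisson_concentration}, the $d$ factors in $kL^{2}=\Theta(\beta^{2}n)$ cancel and the right-hand side is $\exp(-\Omega(n^{2\delta}))$ with a constant depending only on $\beta$.

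It remains to replace $g(N)$ by the unconditional mean. Coupling Multinomial$(k,\mathbf p)$ and Multinomial$(k+1,\mathbf p)$ by adjoining one extra i.i.d.\ point gives $|g(k+1)-g(k)|\leq L$, so $g$ is $L$-Lipschitz. Poisson concentration of $N$ around $m=(1+2t)dn/2$ at scale $n^{1/2+\delta}/3$ (failure probability $\exp(-\Omega(n^{2\delta}))$ by Proposition~\ref{prop:poisson_concentration}) then bounds $|g(N)-g(m)|$ by $L\cdot n^{1/2+\delta}/3$ on the good event, while Lipschitzness of $g$ combined with $\E|N-m|=O(\sqrt{m})$ yields $|\E g(N)-g(m)|=O(L\sqrt{m})=O(\sqrt{n})$, which is absorbed into $n^{1/2+\delta}$ for large $n$. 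Adding the three contributions gives \eqref{eq:logZ_sparse_concentration_coupled}; the bisection bound \eqref{eq:logZ_bis_sparse_concentration_coupled} is proved verbatim, since restricting the summations to $\sigma_1,\sigma_2\in A_n$ preserves the one-step Lipschitz estimate that \eqref{eq:Z_lipschitz} furnishes. The principal obstacle is precisely the unboundedness of the Poisson entries, ruling out a direct bounded-differences argument; Poissonization handles this cleanly, but one must carefully track two separate sources of fluctuation (the Multinomial randomness given $N$ and the fluctuation of $N$ itself) and confirm that the $d$-dependence cancels between $L^{2}=\Theta(1/d)$ and $k=\Theta(dn)$, so that only a single constant $C=C(\beta,d)$ survives in the exponent.
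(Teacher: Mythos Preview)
Your argument is correct and follows essentially the same route as the paper: condition on the number of points dropped, apply a bounded-differences inequality given that number (each point moves $\log Z$ by $O(\beta/\sqrt d)$ via \eqref{eq:Z_lipschitz}), and then control the fluctuation of the count via Poisson tails; the paper conditions on the three separate totals $(\ell,p,q)$ for $\bA^{(1-t)},\bA^{(t,1)},\bA^{(t,2)}$ rather than on the single grand total $N$, but this is cosmetic.

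Two harmless slips worth fixing: the cell distribution $\mathbf p$ is \emph{not} uniform over the $3n^2$ cells unless $t=1/2$ (cells in $\bA^{(1-t)}$ carry weight proportional to $1-t$, the others to $t$), but McDiarmid only needs the $P_i$ to be i.i.d., which they are; and the grand total has mean $(1+t)dn/2$, not $(1+2t)dn/2$. Finally, to make your three-way budget close you want the Poisson deviation scale for $N$ to be $n^{1/2+\delta}/(3L)$ rather than $n^{1/2+\delta}/3$, so that $|g(N)-g(m)|\le L\cdot n^{1/2+\delta}/(3L)=n^{1/2+\delta}/3$; this only shifts the constant in the exponent and is immaterial since the statement allows $C=C(\beta,d)$.
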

\begin{proof}
    We only prove \eqref{eq:logZ_sparse_concentration_coupled} since the proof for \eqref{eq:logZ_bis_sparse_concentration_coupled} is essentially the same. Let $\left(\bX^{(1-t)}_\ell\right)_{\ell \in \mathbb{N}}$, $\left(\bX^{(t, 1)}_\ell\right)_{\ell \in \mathbb{N}}$, and $\left(\bX^{(t, 2)}_\ell\right)_{\ell \in \mathbb{N}}$ be three independent sequences of i.i.d random $n\times n$ matrices where each of these matrices has exactly one entry being 1 which is chosen uniformly from the $n^2$ entries and all other entries are 0. Let 
    \[
    \bB^{(1-t)}_\ell = \sum_{i = 1}^\ell \bX^{(1-t)}_i, \quad\bB^{(t,1)}_\ell = \sum_{i = 1}^\ell \bX^{(t,1)}_i, \quad\bB^{(t,2)}_\ell = \sum_{i = 1}^\ell \bX^{(t,2)}_i.
    \]
    For notational convenience, let us write $Z^{S}_{\ell, p, q}:=Z^{S}\left(\bB^{(1-t)}_{\ell} + \bB^{(t,1)}_{p}, \bB^{(1-t)}_{\ell} + \bB^{(t,2)}_{q}\right)$. We have for every $\ell_1, \ell_2, p_1, p_2, q_1, q_2 \in \mathbb{N}$,
    \begin{equation}
    \left|\log Z^{S}_{\ell_1, p_1, q_1} - \log Z^{S}_{\ell_2, p_2, q_2}\right| \leq \frac{\beta}{\sqrt{d}} \cdot \left(2|\ell_1 - \ell_2| + |p_1 - p_2| + |q_1 - q_2|\right).
    \end{equation}
    This immediately implies that
    \begin{equation}\label{eq:bB_l_bB_k}
    \left|\E\left[\log Z^{S}_{\ell_1, p_1, q_1}\right] - \E\left[\log Z^{S}_{\ell_2, p_2, q_2}\right]\right| \leq \frac{\beta}{\sqrt{d}} \cdot \left(2|\ell_1 - \ell_2| + |p_1 - p_2| + |q_1 - q_2|\right).
    \end{equation}
    Since the sum of independent Poisson variables is still Poisson, we have that 
    \[
    \sum_{i, j} \bA^{(1-t)}_{i, j} \sim \Po\left((1-t)\cdot \frac{dn}{2}\right), \quad \sum_{i, j} \bA^{(t,1)}_{i, j} \sim \Po\left(t\cdot \frac{dn}{2}\right), \quad\sum_{i, j} \bA^{(t,2)}_{i, j} \sim \Po\left(t\cdot \frac{dn}{2}\right).
    \]
    By properties of the Poisson distribution, the random variable $\log Z^{S}_{\bA, \bA_t}$ conditioned on the event \begin{center}``$\sum_{i, j} \bA^{(1-t)}_{i,j} = \ell$ and $\sum_{i, j} \bA^{(t,1)}_{i,j} = p$ and $\sum_{i, j} \bA^{(t,2)}_{i,j} = q$''\end{center} for any $\ell, p, q \in \mathbb{N}$ has the same distribution as $\log Z^{S}_{\ell, p, q}$. Let $P_{\ell, p, q}$ be the probability of this event. Let $m_{1-t} = \lfloor (1-t)dn/2 \rfloor$ and $m_{t} = \lfloor tdn/2 \rfloor$. We have
    \begin{align*}
    & \left|\E\Big[\log Z^{S}_{\bA, \bA_t}\Big] - \E\Big[\log Z^{S}_{m_{1-t}, m_t, m_t}\Big]\right| \\
    =\, &\Bigg|\Bigg(\sum_{\ell, p, q \geq 0}\E\Big[\log Z^{S}_{\ell, p, q}\Big]\cdot P_{\ell, p, q}\Bigg) - \E\Big[\log Z^{S}_{m_{1-t}, m_t, m_t}\Big]\Bigg| \\
    \leq \, & \sum_{\ell,p,q \geq 0} P_{\ell, p, q} \cdot \Bigg|\E\Big[\log  Z^{S}_{\ell, p, q}\Big] - \E\Big[\log Z^{S}_{m_{1-t}, m_t, m_t}\Big]\Bigg| \\
    \leq \, & \sum_{\ell,p,q \geq 0}  P_{\ell, p, q} \cdot \frac{\beta}{\sqrt{d}} \cdot \left(2|\ell - m_{1-t}| + |p - m_t| + |q - m_t|\right) \\
    = \, & \frac{\beta}{\sqrt{d}} \cdot \E\Bigg[2\Bigg|\sum_{i, j}\bA_{i, j}^{(1-t)} - m_{1-t}\Bigg| + \Bigg|\sum_{i, j}\bA_{i, j}^{(t,1)} - m_{t}\Bigg| + \Bigg|\sum_{i, j}\bA_{i, j}^{(t,2)} - m_{t}\Bigg|\Bigg] \\
    \leq \, & \frac{\beta}{\sqrt{d}} \cdot \left(2\sqrt{\E\Bigg[\Bigg(\sum_{i, j}\bA_{i, j}^{(1-t)} - m_{1-t}\Bigg)^2\Bigg]} + \sqrt{\E\Bigg[\Bigg(\sum_{i, j}\bA_{i, j}^{(t,1)} - m_{t}\Bigg)^2\Bigg]} + \sqrt{\E\Bigg[\Bigg(\sum_{i, j}\bA_{i, j}^{(t,2)} - m_{t}\Bigg)^2\Bigg]} \right)\\
    \leq \, & 2\frac{\beta}{\sqrt{d}}\cdot \left(\sqrt{\frac{(1-t)dn}{2} + 1} + \sqrt{\frac{tdn}{2} + 1}\right), \numberthis \label{eq:bA_bB_m}
    \end{align*}
     where in the last inequality we used the fact that $\Po(\lambda)$ has variance $\lambda$. Combining \eqref{eq:bB_l_bB_k} and \eqref{eq:bA_bB_m} we obtain
    \begin{align*}
        &\Big| \E\big[\log Z^{S}_{\bA, \bA_t}\big] - \E\big[\log Z^{S}_{\ell, p, q}\big]\Big| \\
        \leq\, & \frac{\beta}{\sqrt{d}} \left(2\sqrt{\frac{(1-t)dn}{2} + 1} + 2\sqrt{\frac{tdn}{2} + 1} + 2\left|\ell - \left\lfloor \frac{(1-t)dn}{2}\right\rfloor\right| + \left|p - \left\lfloor \frac{tdn}{2}\right\rfloor\right| +  \left|q - \left\lfloor \frac{tdn}{2}\right\rfloor\right|\right).
    \end{align*}
    In particular, if $\left|\ell - \left\lfloor \frac{(1-t)dn}{2}\right\rfloor\right| \leq \frac{\sqrt{d}n^{1/2 + \delta}}{18\beta}$,  $\left|p - \left\lfloor \frac{tdn}{2}\right\rfloor\right| \leq \frac{\sqrt{d}n^{1/2 + \delta}}{9\beta}$ and  $\left|q - \left\lfloor \frac{tdn}{2}\right\rfloor\right| \leq \frac{\sqrt{d}n^{1/2 + \delta}}{9\beta}$, then if $n$ is sufficiently large
    \begin{equation}
        \Big| \E\big[\log Z^{S}_{\bA, \bA_t}\big] - \E\big[\log Z^{S}_{\ell, p, q}\big]\Big| \leq 4\frac{\beta}{\sqrt{d}} \sqrt{\frac{dn}{2} + 1} + \frac{1}{3}\cdot n^{1/2+\delta} < \frac{1}{2} \cdot n^{1/2 + \delta}.
    \end{equation}
     By Azuma's inequality, we have that for every $\ell, p, q \in \mathbb{N}$
    \begin{equation}
    \Pr\left[\left|\log Z^{S}_{\ell, p, q} - \E\Big[\log Z^{S}_{\ell, p, q}\Big]\right| \geq t \right] \leq \exp\left(- \frac{dt^2}{2(2\ell + p + q)  \beta^2}\right).
    \end{equation}
    It follows that for any $\ell, p, q$ with $\left|\ell - \left\lfloor \frac{(1-t)dn}{2}\right\rfloor\right| \leq \frac{n^{1/2 + \delta}}{18\beta}$,  $\left|p - \left\lfloor \frac{tdn}{2}\right\rfloor\right| \leq \frac{n^{1/2 + \delta}}{9\beta}$ and  $\left|q - \left\lfloor \frac{tdn}{2}\right\rfloor\right| \leq \frac{n^{1/2 + \delta}}{9\beta}$, if $n$ is sufficiently large we have
    \begin{align*}
        & \Pr\left[ \left|\log  Z^{S}_{\ell, p, q} - \E\big[\log Z^{S}_{\bA, \bA_t}\big]\right| \geq n^{1/2 + \delta}\right] \\
        \leq\, & \Pr\left[ \left|\log  Z^{S}_{\ell, p, q} - \E\big[\log Z^{S}_{\ell, p, q}\big]\right| \geq n^{1/2 + \delta} - \Big| \E\big[\log Z^{S}_{\bA, \bA_t}\big] - \E\big[\log Z^{S}_{\ell, p, q}\big]\Big|\right] \\
        \leq\, & \Pr\left[ \left|\log Z^{S}_{\ell, p, q} - \E\big[\log Z^{S}_{\ell, p, q}\big]\right| \geq \frac{1}{2} \cdot n^{1/2 + \delta} \right] \\
        \leq \, & \exp\left(- \frac{dn^{1 + 2\delta}}{8\left(\frac{n^{1/2 + \delta}}{3\beta} + \left\lfloor dn\right\rfloor\right)\beta^2}\right)\\
        \leq \, & \exp\left(- C_1 \cdot n^{2\delta}\right) 
    \end{align*}
    where $C_1 = C_1(\beta , d) > 0$. By Proposition~\ref{prop:poisson_concentration}, we can also find $C_2 = C_2(\beta, d) > 0$ such that for all sufficiently large $n$
    \begin{align}
        \Pr\left[\left|\sum_{i, j = 1}^n \bA^{(1-t)}_{i, j} - \lfloor (1-t)dn/2\rfloor\right| \geq \frac{\sqrt{d}n^{1/2 + \delta}}{18\beta}\right] \leq \exp(-C_2 \cdot n^{2\delta}), \\
        \Pr\left[\left|\sum_{i, j = 1}^n \bA^{(t,1)}_{i, j} - \lfloor tdn/2\rfloor\right| \geq \frac{\sqrt{d}n^{1/2 + \delta}}{9\beta}\right] \leq \exp(-C_2 \cdot n^{2\delta}), \\
        \Pr\left[\left|\sum_{i, j = 1}^n \bA^{(t,2)}_{i, j} - \lfloor tdn/2\rfloor\right| \geq \frac{\sqrt{d}n^{1/2 + \delta}}{9\beta}\right] \leq \exp(-C_2 \cdot n^{2\delta}).
    \end{align}
    We can therefore conclude that there exists $C = C(\beta, d) > 0$ such that if $n$ is suffciently large, 
    \begin{align*}        
    & \Pr\left[\left|\log Z^{S}_{\bA, \bA_t} - \E[\log Z^{S}_{\bA, \bA_t}]\right| \geq n^{1/2 + \delta}\right] \\
    \leq \, & \Pr\left[\left|\sum_{i, j = 1}^n \bA^{(1-t)}_{i, j} - \lfloor (1-t)dn/2\rfloor\right| \geq \frac{\sqrt{d}n^{1/2 + \delta}}{18\beta}\right] + \Pr\left[\left|\sum_{i, j = 1}^n \bA^{(t,1)}_{i, j} - \lfloor tdn/2\rfloor\right| \geq \frac{\sqrt{d}n^{1/2 + \delta}}{9\beta}\right] \\
    & \qquad + \Pr\left[\left|\sum_{i, j = 1}^n \bA^{(t,2)}_{i, j} - \lfloor tdn/2\rfloor\right| \geq \frac{\sqrt{d}n^{1/2 + \delta}}{9\beta}\right] \\
    & \qquad + \sum_{\substack{\ell: \left|\ell - \left\lfloor \frac{(1-t)dn}{2}\right\rfloor\right| \leq \frac{\sqrt{d}n^{1/2 + \delta}}{18\beta} \\ p: \left|p - \left\lfloor \frac{tdn}{2}\right\rfloor\right| \leq \frac{\sqrt{d}n^{1/2 + \delta}}{9\beta}\\ q: \left|p - \left\lfloor \frac{tdn}{2}\right\rfloor\right| \leq \frac{\sqrt{d}n^{1/2 + \delta}}{9\beta}} } P_{\ell, p, q} \cdot\Pr\left[ \left|\log Z^{S}_{\ell, p, q}  - \E\big[\log Z^{S}_{\bA, \bA_t} \big]\right| \geq n^{1/2 + \delta}\right] \\
    \leq \, & 3\exp(-C_2 \cdot n^{2\delta}) +  \sum_{\substack{\ell: \left|\ell - \left\lfloor \frac{(1-t)dn}{2}\right\rfloor\right| \leq \frac{\sqrt{d}n^{1/2 + \delta}}{18\beta} \\ p: \left|p - \left\lfloor \frac{tdn}{2}\right\rfloor\right| \leq \frac{\sqrt{d}n^{1/2 + \delta}}{9\beta}\\ q: \left|p - \left\lfloor \frac{tdn}{2}\right\rfloor\right| \leq \frac{\sqrt{d}n^{1/2 + \delta}}{9\beta}} } P_{\ell, p, q}\cdot\exp\left(- C_1 \cdot n^{2\delta}\right) \\
    \leq \, & 3\exp(-C_2 \cdot n^{2\delta}) + \exp\left(- C_1 \cdot n^{2\delta}\right) \\
    \leq \, & \exp(-C \cdot n^{2\delta}). \qedhere
    \end{align*}
\end{proof}

\begin{theorem}\label{theorem:phi_sparse_limit_bis_coupled}
    Let $0 \leq a < b$. For every $\epsilon > 0$, if $d$ is sufficiently large, then for every $\beta \in [a, b]$, 
    \begin{equation}
    \limsup_{n \to \infty}  \frac{1}{n}\left| \E\log Z^{S}_{\bA,\bA_t} - \E\log Z^{S,\bis}_{\bA,\bA_t}\right| \leq \epsilon.
    \end{equation}    
\end{theorem}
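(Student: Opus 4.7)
The plan is to adapt the dense-case covering argument of Theorem~\ref{thm:bisection_free_energy} to the sparse coupled model. In the SK setting one exploited the gauge symmetry $\sigma \mapsto \sigma \circ \tau$, which preserves the Gaussian law of $\bg$ and hence the distribution of $Z^{S,\bis}_{\bg,\bg_t}$; this symmetry fails for the Poisson-entried matrices $\bA, \bA_t$. Instead, I will map each pair of arbitrary configurations to a nearest pair of bisections via Lemma~\ref{lemma:sigma_to_tau_pairs}, and invoke Lemma~\ref{lem:bisection_prob_bound} to show that the Hamiltonian change is small uniformly over $\sigma$ with high probability.

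Concretely, first reduce to $n$ odd since shifting $n$ by one perturbs both $\E \log Z^S_{\bA,\bA_t}$ and $\E \log Z^{S,\bis}_{\bA,\bA_t}$ by $o(n)$. Fix $\epsilon' := \epsilon/(4b)$ and let $\delta = \delta(\epsilon') > 0$ be the constant from Lemma~\ref{lem:bisection_prob_bound}. Choose $d$ large enough that $\delta d^{1/4} > 2 \log 2$. A union bound over $\sigma \in \{-1,1\}^n$ combined with Lemma~\ref{lem:bisection_prob_bound} applied to both $\bA$ and $\bA_t$ shows that with probability $1 - o_n(1)$ the event $\mathcal{E}$ holds: for every $\sigma \in \{-1,1\}^n$ and every nearest bisection $\tau$ of $\sigma$,
\[
H(\tau; \bA) - H(\sigma; \bA) \leq \epsilon' \sqrt{d}\, n \qquad \text{and} \qquad H(\tau; \bA_t) - H(\sigma; \bA_t) \leq \epsilon' \sqrt{d}\, n.
\]

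Next, let $f : \{-1,1\}^n \times \{-1,1\}^n \to A_n \times A_n$ be the map from Lemma~\ref{lemma:sigma_to_tau_pairs}, so that $f(\sigma_1,\sigma_2) = (\tau_1,\tau_2)$ has each $\tau_i$ a nearest bisection for $\sigma_i$, $R_{1,2}(\tau_1,\tau_2) = R_{1,2}(\sigma_1,\sigma_2)$, and preimages of size at most $C n^{3/2}$. On $\mathcal{E}$, for each such pair,
\[
\exp\!\left(-\tfrac{\beta}{\sqrt{d}}(H(\sigma_1;\bA) + H(\sigma_2;\bA_t))\right) \leq e^{2\beta \epsilon' n}\exp\!\left(-\tfrac{\beta}{\sqrt{d}}(H(\tau_1;\bA) + H(\tau_2;\bA_t))\right).
\]
Summing the left side over $(\sigma_1,\sigma_2)$ with $R_{1,2}(\sigma_1,\sigma_2) \in S$ and grouping the right side by image, the preimage bound together with the fact that $f$ maps into bisection pairs with overlap in $S$ yields
\[
Z^S_{\bA,\bA_t} \leq C n^{3/2} e^{2\beta \epsilon' n}\, Z^{S,\bis}_{\bA,\bA_t}.
\]
Combined with the trivial inequality $Z^{S,\bis}_{\bA,\bA_t} \leq Z^S_{\bA,\bA_t}$, this produces on $\mathcal{E}$,
\[
\bigl|\log Z^S_{\bA,\bA_t} - \log Z^{S,\bis}_{\bA,\bA_t}\bigr| \leq 2 b \epsilon' n + O(\log n).
\]

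Finally, intersect $\mathcal{E}$ with the two events from Lemma~\ref{lemma:logZ_sparse_concentration_coupled} (applied with $\delta_0 = 1/4$) that bound $|\log Z - \E\log Z|$ by $n^{3/4}$ for each of $Z^S_{\bA,\bA_t}$ and $Z^{S,\bis}_{\bA,\bA_t}$. This triple intersection has probability $1 - o_n(1)$, hence is nonempty for all large $n$; picking any outcome in it and applying the triangle inequality to the deterministic quantity $|\E \log Z^S_{\bA,\bA_t} - \E \log Z^{S,\bis}_{\bA,\bA_t}|$ gives a bound of $2 b \epsilon' n + O(n^{3/4})$. Dividing by $n$ and taking $\limsup$ yields a bound of $2 b \epsilon' = \epsilon/2 < \epsilon$, uniformly for $\beta \in [a,b]$. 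The main obstacle is that the $2^n$-size union bound against the single-$\sigma$ tail $2\exp(-\delta d^{1/4} n)$ requires $\delta d^{1/4} > \log 2$, which is precisely why the theorem must take $d$ sufficiently large in terms of $\epsilon$ (via $b$).
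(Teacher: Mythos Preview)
Your proposal is correct and follows essentially the same approach as the paper: union-bound Lemma~\ref{lem:bisection_prob_bound} over all configurations to get a high-probability event on which $Z^S_{\bA,\bA_t} \le Cn^{3/2}e^{2\beta\epsilon' n}Z^{S,\bis}_{\bA,\bA_t}$ via the map $f$ of Lemma~\ref{lemma:sigma_to_tau_pairs}, then combine with the concentration of Lemma~\ref{lemma:logZ_sparse_concentration_coupled} and the triangle inequality. Your explicit reduction to $n$ odd and your upfront choice $\epsilon'=\epsilon/(4b)$ are slightly more careful than the paper's write-up, but the argument is otherwise identical.
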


\begin{proof}
    Fix $\epsilon > 0$. Let $f$ be the function specified in Lemma~\ref{lemma:sigma_to_tau_pairs}. By Lemma~\ref{lem:bisection_prob_bound}, there exists $\delta>0$ such that for all $\sigma_1,\sigma_2 \in \{-1, 1\}^n$, if $f(\sigma_1, \sigma_2) = (\tau_1,\tau_2)$ and $d$ is sufficiently large then
    \begin{equation}\label{eq:tau_1_good}
    \Pr\left[H(\tau_1; \bA) - H(\sigma_1;\bA) \geq \epsilon \cdot \sqrt{d} \cdot n\right] \leq 2 \cdot \exp\left(-\delta d^{1/4} n\right)
    \end{equation}
    and
    \begin{equation}\label{eq:tau_2_good}
    \Pr\left[H(\tau_2; \bA_t) - H(\sigma_2;\bA_t) \geq \epsilon \cdot \sqrt{d} \cdot n\right] \leq 2 \cdot \exp\left(-\delta d^{1/4} n\right).
    \end{equation}
    By the union bound, the probability that there exists such a pair $\sigma_1,\sigma_2 \in \{-1, 1\}^n$ for which the events in \eqref{eq:tau_1_good} or \eqref{eq:tau_2_good} happens is at most $4^{n+1} \cdot \exp\left(-\delta d^{1/4} n\right)$. By making $d$ sufficiently large we can make this probability vanishingly small. If no such pair exists, then we have
    \begin{align*}
        Z^{S}_{\bA,\bA_t} & = \sum_{\sigma_1,\sigma_2\in \{-1, 1\}^n:R_{1,2}(\sigma_1,\sigma_2)\in S} \exp\left(-\frac{\beta}{\sqrt{d}} \cdot \left(H(\sigma_1; \bA) + H(\sigma_2; \bA_t)\right)\right) \\ 
        & \leq \sum_{\substack{\sigma_1,\sigma_2\in \{-1, 1\}^n:R_{1,2}(\sigma_1,\sigma_2)\in S \\ f(\sigma_1, \sigma_2) = (\tau_1,\tau_2)}} \exp\left(-\frac{\beta}{\sqrt{d}} \cdot \left( H(\tau_1; \bA) + H(\tau_2; \bA_t) -  2\epsilon \sqrt{d} n\right)\right) \\
        & = \sum_{\substack{\sigma_1,\sigma_2\in \{-1, 1\}^n:R_{1,2}(\sigma_1,\sigma_2)\in S \\ f(\sigma_1, \sigma_2) = (\tau_1,\tau_2)}} \exp\left(-\frac{\beta}{\sqrt{d}} \cdot \left(H(\tau_1; \bA) + H(\tau_2; \bA_t)\right) \right) \cdot \exp\left(2\epsilon \beta n\right) \\
        & \leq Cn^{3/2} \cdot \sum_{\tau_1,\tau_2\in \{-1, 1\}^n:R_{1,2}(\tau_1,\tau_2)\in S} \exp\left(-\frac{\beta}{\sqrt{d}} \cdot \left(H(\tau_1; \bA) + H(\tau_2; \bA_t)\right)  \right) \cdot \exp\left(2\epsilon \beta n\right) \\
        & = Cn^{3/2}  \cdot Z^{S,\bis}_{\bA,\bA_t} \cdot \exp\left(2\epsilon \beta n\right).
    \end{align*}
    Here $C > 0$ is the constant from Lemma~\ref{lemma:sigma_to_tau_pairs} for which we have $|f^{-1}(\tau_1, \tau_2)| \leq C n^{3/2}$ for every $\tau_1,\tau_2 \in A_n$. It follows that
    \[
    \log Z^{S,\bis}_{\bA,\bA_t} \leq \log Z^{S}_{\bA,\bA_t} \leq \log Z^{S,\bis}_{\bA,\bA_t} + 2\epsilon \beta n + \frac{3}{2} \log (Cn).
    \]
    By Lemma~\ref{lemma:logZ_sparse_concentration_coupled}, if $d$ is sufficiently large, then we have with positive probability all three of the following events happen:
    \begin{itemize}
        \item $\left|\log Z^{S}_{\bA,\bA_t} - \E[\log Z^{S}_{\bA,\bA_t}]\right| < n^{3/4}$,
        \item $\left|\log Z^{S, \bis}_{\bA,\bA_t} - \E[\log Z^{S, \bis}_{\bA,\bA_t}]\right| < n^{3/4}$,
        \item $\left|\log Z^{S}_{\bA,\bA_t} - \log Z^{S,\bis}_{\bA,\bA_t}\right| < 2\epsilon \beta n + \frac{3}{2}\log (Cn)$.
    \end{itemize}
    By the triangle inequality, when these three events hold we also have    
    \begin{equation}\label{eq:ElogZ_bound}
        \left|\E\left[\log Z^{S,\bis}_{\bA,\bA_t} - \log Z^{S}_{\bA,\bA_t}\right]\right| \leq 2n^{3/4} + 2\epsilon \beta n + \log (Cn).
    \end{equation}
    Since the both sides of \eqref{eq:ElogZ_bound} are constant, we have that if it holds with positive probability then it must hold always. We then have that if $d$ is sufficiently large
    \begin{equation}
        \limsup_{n \to \infty} \left|\frac{1}{n}  \E\left[\log Z^{S,\bis}_{\bA,\bA_t} - \log Z^{S}_{\bA,\bA_t}\right]\right| \leq 2\epsilon \beta \leq 2\epsilon b.
    \end{equation}
    This completes the proof since $\epsilon$ is arbitrarily chosen.
\end{proof}

By letting $t = 0$ we immediately obtain the following.

\begin{theorem}\label{theorem:phi_sparse_limit_bis}
    Let $0 \leq a < b$. For every $\epsilon > 0$,  if $d$ is sufficiently large, then for every $\beta \in [a, b]$, 
    \begin{equation}\label{eq:phi_sparse_limsup}
    \limsup_{n \to \infty} \left|\Phi_{n,d} \left(\frac{\beta}{\sqrt{d}}\right)  - \Phi_{n,d}^\bis \left(\frac{\beta}{\sqrt{d}}\right) \right| \leq \epsilon.
    \end{equation}    
\end{theorem}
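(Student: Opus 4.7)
The plan is to derive Theorem~\ref{theorem:phi_sparse_limit_bis} as an immediate specialization of Theorem~\ref{theorem:phi_sparse_limit_bis_coupled} by setting the perturbation parameter $t=0$ and choosing the overlap window $S=[-1,1]$. The point is that in this regime the coupled partition functions degenerate into squares of uncoupled partition functions, so the coupled bisection comparison directly yields the uncoupled one.

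First, I would observe that when $t=0$, the definitions in Section~\ref{secOverview} give $\bA^{(t,1)}_{ij},\bA^{(t,2)}_{ij}\sim\Po(0)$ almost surely, so $\bA^{(t,1)}=\bA^{(t,2)}=0$ and consequently $\bA=\bA_t=\bA^{(1-t)}$, a single Poisson matrix with entries $\Po(d/(2n))$. Next, with $S=[-1,1]$, the overlap constraint is vacuous and the two sums decouple, giving
\[
Z^{[-1,1]}_{\bA,\bA_t}=\sum_{\sigma_1,\sigma_2\in\{-1,1\}^n}\exp\!\left(-\tfrac{\beta}{\sqrt d}(H(\sigma_1;\bA)+H(\sigma_2;\bA))\right)=Z(\beta/\sqrt d,\bA)^2,
\]
and, analogously, $Z^{[-1,1],\bis}_{\bA,\bA_t}=Z^{\bis}(\beta/\sqrt d,\bA)^2$. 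Taking logs and expectations then yields
\[
\E\!\left[\log Z^{[-1,1]}_{\bA,\bA_t}\right]-\E\!\left[\log Z^{[-1,1],\bis}_{\bA,\bA_t}\right]=2n\left(\Phi_{n,d}(\beta/\sqrt d)-\Phi_{n,d}^{\bis}(\beta/\sqrt d)\right).
\]

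Now I would invoke Theorem~\ref{theorem:phi_sparse_limit_bis_coupled} with this $S$ and $t=0$, applied with the threshold $\epsilon/2$ in place of $\epsilon$. Given any $\epsilon>0$, the theorem supplies $d_0=d_0(\epsilon,a,b)$ such that for all $d\geq d_0$ and all $\beta\in[a,b]$,
\[
\limsup_{n\to\infty}\frac{1}{n}\left|\E\log Z^{[-1,1]}_{\bA,\bA_t}-\E\log Z^{[-1,1],\bis}_{\bA,\bA_t}\right|\leq\epsilon.
\]
Substituting the factorization above, the left-hand side equals $2\limsup_{n\to\infty}|\Phi_{n,d}(\beta/\sqrt d)-\Phi_{n,d}^{\bis}(\beta/\sqrt d)|$, so dividing by $2$ (and perhaps re-labeling the tolerance) proves \eqref{eq:phi_sparse_limsup}.

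There is essentially no obstacle here: the only thing to verify carefully is that the proof of Theorem~\ref{theorem:phi_sparse_limit_bis_coupled} is valid for the choice $t=0$ and $S=[-1,1]$. Inspecting that proof, its ingredients (Lemma~\ref{lem:bisection_prob_bound}, Lemma~\ref{lemma:sigma_to_tau_pairs}, and the concentration lemma Lemma~\ref{lemma:logZ_sparse_concentration_coupled}) make no essential use of $t>0$ or of a restricted $S$; the bounds go through verbatim. Thus the reduction is immediate, and the conclusion follows with no additional work.
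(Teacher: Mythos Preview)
Your proposal is correct and is exactly the paper's approach: the paper derives Theorem~\ref{theorem:phi_sparse_limit_bis} in one line by ``letting $t=0$'' in Theorem~\ref{theorem:phi_sparse_limit_bis_coupled}, and your note makes explicit the choice $S=[-1,1]$ and the resulting factorization $Z^{[-1,1]}_{\bA,\bA_t}=Z(\beta/\sqrt d,\bA)^2$ (and similarly for the bisection version). The only cosmetic slip is the mismatch between ``apply with threshold $\epsilon/2$'' and the displayed bound $\leq\epsilon$; the correct bookkeeping is to apply Theorem~\ref{theorem:phi_sparse_limit_bis_coupled} with tolerance $2\epsilon$ (or simply absorb the factor of $2$ as you suggest), which is harmless.
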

\end{document}